	\renewcommand{\geq}{\geqslant}
	\renewcommand{\leq}{\leqslant}
	\renewcommand{\phi}{\varphi}
	\renewcommand{\Im}{\operatorname{Im}\nolimits}
	\renewcommand{\sp}{\mathrm{sp}}
	\providecommand{\corollaryname}{Corollary}
	\providecommand{\definitionname}{Definition}
	\providecommand{\examplename}{Example}
	\providecommand{\lemmaname}{Lemma}
	\providecommand{\notationname}{Notation}
	\providecommand{\propositionname}{Proposition}
	\providecommand{\remarkname}{Remark}
	\providecommand{\theoremname}{Theorem}
	\providecommand{\setupname}{Setup}
	\providecommand{\conjecturename}{Conjecture}
	\providecommand{\questionname}{Question}
	\providecommand{\claimname}{Claim}
	\theoremstyle{plain}
		\newtheorem{thm}{\protect\theoremname}[section] 
		\newtheorem{thmx}{Theorem}
		\newtheorem{prop}[thm]{\protect\propositionname}
		\newtheorem{lem}[thm]{\protect\lemmaname}
		\newtheorem{cor}[thm]{\protect\corollaryname}
	\theoremstyle{definition}
		\newtheorem{defn}[thm]{\protect\definitionname}
		\newtheorem{notation}[thm]{\protect\notationname}
		\newtheorem{example}[thm]{\protect\examplename}
		\newtheorem{setup}[thm]{\setupname}
	\theoremstyle{remark}
		\newtheorem{rem}[thm]{\protect\remarkname}
	\numberwithin{figure}{section}
	\numberwithin{equation}{section}
		\newcommand\ackname{Acknowledgements}
		\newenvironment{acknowledgements}{%
			\medskip
			\bgroup
			\list{}{\labelwidth\z@
			\leftmargin3pc \rightmargin\leftmargin
			\listparindent\normalparindent \itemindent\z@
			\parsep\z@ \@plus\p@
			
				}%
				\Small
				\item[\hskip\labelsep\scshape\ackname.]%
			}{%
			\endlist\egroup
			}
	\tikzset{commutative diagrams/.cd, 
		mysymbol/.style = {start anchor=center, end anchor = center, draw = none}}
	\newcommand{\commutes}[2][\circlearrowleft]{\arrow[mysymbol]{#2}[description]{#1}}
	\newcommand{\BE}{\mathbb{E}}
	\newcommand{\BF}{\mathbb{F}}
	\newcommand{\BZ}{\mathbb{Z}}
	\newcommand{\CA}{\mathcal{A}}
	\newcommand{\CB}{\mathcal{B}}
	\newcommand{\CC}{\mathcal{C}}
	\newcommand{\DD}{\mathcal{D}}
	\newcommand{\CF}{\mathcal{F}}
	\newcommand{\CG}{\mathcal{G}}
	\newcommand{\CI}{\mathcal{I}}
	\newcommand{\CS}{\mathcal{S}}
	\newcommand{\CT}{\mathcal{T}}
	\newcommand{\CX}{\mathcal{X}}
	\newcommand{\SF}{\mathscr{F}}
		\newcommand{\SET}{\operatorname{\mathsf{Set}}\nolimits}
		\newcommand{\Ab}{\operatorname{\mathsf{Ab}}\nolimits}
		\newcommand{\add}{\operatorname{\mathsf{add}}\nolimits}
		\newcommand{\ind}{\operatorname{\mathsf{ind}}\nolimits}
		\newcommand{\skel}{\mathrm{skel}}
		\newcommand{\op}{\mathrm{op}}
		\newcommand{\field}{k}
		\newcommand{\sse}{\subseteq}
		\newcommand{\obj}{\operatorname{obj}\nolimits}
		\newcommand{\Ker}{\operatorname{Ker}\nolimits}
		\newcommand{\iso}{\cong}
		\newcommand{\niso}{\ncong}
		\newcommand{\Hom}{\operatorname{Hom}\nolimits}
		\newcommand{\rad}{\operatorname{rad}\nolimits}
		\newcommand{\onto}{\rightarrow\mathrel{\mkern-14mu}\rightarrow}
		\newcommand{\idfunc}[1]{\mathbbm{1}_{#1}}
		\newcommand{\indx}[1]{\operatorname{\mathrm{ind}}_{#1}\nolimits}
		\newcommand{\indpent}{\mathchoice
  			{\pentago}
  			{\pentago}
  			{\scalebox{.7}{\pentago}}
  			{\scalebox{.5}{\pentago}}
		}
		\newcommand{\dindx}[1]{\operatorname{\mathrm{ind}}_{#1}^{\indpent}\nolimits}
		\newcommand{\Ext}{\operatorname{Ext}\nolimits}
		\newcommand{\fs}{\mathfrak{s}}
		\newcommand{\sus}{\Sigma} 
		\newcommand{\rmod}[1]{\operatorname{\mathsf{mod}}\nolimits{#1}}
		\newcommand{\rMod}[1]{\operatorname{\mathsf{Mod}}\nolimits{#1}}
		\newcommand{\fl}[1]{\operatorname{\mathsf{fl}}\nolimits{#1}}
		\newcommand{\kom}{\mathsf{K}}
		\newcommand{\com}{\mathsf{C}}
		\newcommand{\cone}{\mathrm{MC}}
		\newcommand{\combul}{\raisebox{0.5pt}{\scalebox{0.6}{$\bullet$}}}
	\newcommand{\kdual}{\operatorname{D}\nolimits}
	\newcommand{\deff}{\coloneqq}
	\newcommand{\eps}{\varepsilon}
	\newcommand{\lan}{\langle}
	\newcommand{\ran}{\rangle}
	\newcommand{\wt}[1]{\widetilde{#1}}
	\newcommand{\wh}[1]{\widehat{#1}}
	\newcommand{\ol}[1]{\overline{#1}}
	\newcommand\restr[2]{{\left.\kern-\nulldelimiterspace#1
						\right|_{#2}}}
\begin{document}

\title[Grothendieck groups and a modified Caldero-Chapoton map]%
{
Grothendieck groups of $\lowercase{d}$-exangulated categories 
and a modified Caldero-Chapoton map}

\author[J{\o{}}rgensen]{Peter J{\o{}}rgensen}
	\address{
		Department of Mathematics\\
		Aarhus University\\
		Ny Munkegade 118\\
		8000 Aarhus C\\
		Denmark
	}
    \email{peter.jorgensen@math.au.dk}

\author[Shah]{Amit Shah}
    \email{amit.shah@math.au.dk}

\date{\today}

\keywords{%
	Caldero-Chapoton map, 
	Grothendieck group, 
	higher angulated category, 
	index, 
	$n$-cluster tilting subcategory, 
	$n$-exangulated category, 
	rigid subcategory%
}

\subjclass[2020]{%
Primary 16E20; 
Secondary 13F60, 18E05, 18E10, 18G80%
}

\begin{abstract}
A strong connection between cluster algebras and representation theory was established by the cluster category. Cluster characters, like the original Caldero-Chapoton map, are maps from certain triangulated categories to cluster algebras and they have generated much interest.
Holm and J\o{}rgensen constructed a modified Caldero-Chapoton map from a sufficiently nice triangulated category to a commutative ring, which is a generalised frieze under some conditions. 
In their construction, a quotient $K_{0}^{\sp}(\CT)/M$ of a Grothendieck group of a cluster tilting subcategory $\CT$ is used. 
In this article, we show that this quotient is the Grothendieck group of a certain extriangulated category, thereby exposing the significance of it and the relevance of extriangulated structures.
We use this to define another modified Caldero-Chapoton map that recovers the one of Holm--J\o{}rgensen. 

We prove our results in a higher homological context. Suppose $\CS$ is a $(d+2)$-angulated category with subcategories $\CX\sse\CT\sse\CS$, where $\CX$ is functorially finite and $\CT$ is $2d$-cluster tilting, satisfying some mild conditions. 
We show there is an isomorphism between 
the Grothendieck group $K_{0}(\CS,\BE_{\CX},\fs_{\CX})$ 
of the category $\CS$, equipped with the $d$-exangulated structure induced by $\CX$, 
and the quotient $K_{0}^{\sp}(\CT)/N$, where $N$ is the higher analogue of $M$ above. 
When $\CX=\CT$ the isomorphism is induced by the higher index with respect to $\CT$ introduced recently by J\o{}rgensen. Thus, in the general case, we can understand the map 
taking an object in $\CS$ to its $K_{0}$-class in $K_{0}(\CS,\BE_{\CX},\fs_{\CX})$ as a higher index with respect to the rigid subcategory $\CX$.
\end{abstract}

\maketitle


\section{Introduction}

Cluster algebras were introduced by Fomin and Zelevinsky in \cite{FominZelevinsky-cluster-algebras-I} and have since seen links to several different fields, such as integrable systems, Poisson geometry and particle physics. The cluster category associated to a hereditary algebra, defined in \cite{BMRRT-cluster-combinatorics}, is a categorification of the corresponding cluster algebra. This relationship is exhibited by the so-called \emph{Caldero-Chapoton map}, which is a cluster character that associates elements of the cluster algebra to certain types of objects of the cluster category; 
see, for example, 
\cite{CalderoChapoton-cluster-algebras-as-hall-algebras-of-quiver-representations}, 
\cite{CalderoKeller-from-triangulated-categories-to-cluster-algebras-II}, 
\cite{CalderoKeller-from-triangulated-categories-to-cluster-algebras-I}, 
\cite{Demonet-categorification-of-skew-symmetrizable-cluster-algebras}, 
\cite{DerksenWeymanZelevinsky-quivers-with-potentials-and-their-representations-I}, 
\cite{DerksenWeymanZelevinsky-quivers-with-potentials-and-their-representations-II}, 
\cite{FuKeller-on-cluster-algebras-with-coefficients-and-2-calabi-Yau-categories}, 
\cite{HolmJorgensen-generalized-friezes-and-a-modified-caldero-chapoton-map-depending-on-a-rigid-object-1}, 
\cite{HolmJorgensen-generalized-friezes-and-a-modified-caldero-chapoton-map-depending-on-a-rigid-object-2}, 
\cite{JorgensenPalu-a-caldero-chapoton-map-for-infinite-clusters}, 
\cite{Palu-cluster-characters-for-2-calabi-yau-triangulated-categories}, 
\cite{Plamondon-cluster-characters-for-cluster-categories-with-infinite-dimensional-morphism-spaces}, 
\cite{ZhouZhu-cluster-algebras-arising-from-cluster-tubes}. 

Let us recall a version of the Caldero-Chapoton map equivalent to the original of \cite[Sec.\ 3]{CalderoChapoton-cluster-algebras-as-hall-algebras-of-quiver-representations}. 
For unexplained notation, see Subsection~\ref{sec:conventions-notation} below. 
Let $\field$ be an algebraically closed field and let $\CC$ be a $\field$-linear, $\Hom$-finite, Krull-Schmidt, triangulated category that is 2-Calabi-Yau. 
Denote by $\sus$ the suspension functor of $\CC$. 
The Caldero-Chapoton map depends on the choice of a cluster tilting object $T\in\CC$, which we assume to be basic. 
Set $\CT = \add T$. 
Define the functor $G\colon \CC\to \rmod{\CT}$ by $G_{T}(C) = \CC(T,\sus C)$. 
Since $\CT$ is cluster tilting, for each object $C\in\CC$ there is a triangle 
$T^{0}\to T^{1}\to C \to \sus T^{0}$, and we put $\indx{\CT}(C) = [T^{1}] - [T^{0}]$, which is an element of $K_{0}^{\sp}(\CT)$ known as the \emph{index} (with respect to $\CT$) of $C$. 
There is a group homomorphism $\ol{\phi}\colon K_{0}(\rmod{\CT}) \to K_{0}^{\sp}(\CT)$, measuring how far the index is from being additive over triangles in $\CC$ (see \cite[Thm.\ 4.4]{Jorgensen-tropical-friezes-and-the-index-in-higher-homological-algebra}, or Section~\ref{sec:re-modified-CC-map}). 
Put $A = \BZ[\tensor[]{x}{_{T'}}, \tensor[]{x}{_{T'}^{-1}}\tensor[]{]}{_{T'\in\ind{\CT}}}$. 
Define maps $\eps\colon K_{0}^{\sp}(\CT) \to A$, $\alpha\colon \obj(\CC)\to A$ and $\beta\colon K_{0}(\rmod{\CT})\to A$ as follows: 
$\eps([T']) = \tensor[]{x}{_{T'}}$ for $T'\in\ind{\CT}$; 
$\alpha = \eps\circ \indx{\CT}$; and 
$\beta = \eps\circ \ol{\phi}$. 
Assume also that $\eps$ is ``exponential'' in the sense that $\eps(0) = 1$ and $\eps(e+f) = \eps(e)\eps(f)$. 
Then the formula of the Caldero-Chapoton map given in \cite[1.8]{JorgensenPalu-a-caldero-chapoton-map-for-infinite-clusters} is
\begin{equation}
\label{eqn:modified-CC-map}
\rho(C) 
	= \alpha(C) \sum_{e}\chi\left(\text{Gr}_{e}G_{T}(C)\right) \beta(e),
\end{equation}
where $\text{Gr}_{e}(G_{T}(C))$ is the Grassmannian of submodules of 
$\CC(T,\sus C)$ in $\rmod{\CT}$ with $K_{0}$-class $e$ in $K_{0}(\rmod{\CT})$, and $\chi$ is the Euler characteristic defined by \'{e}tale cohomology with proper support. 

From its initial context, the Caldero-Chapoton map has been studied and generalised. 
For example, in \cite{HolmJorgensen-generalized-friezes-and-a-modified-caldero-chapoton-map-depending-on-a-rigid-object-2} an extension of the map was given dealing with the rigid subcategory case and where $A$ is a general commutative ring. 
Let $\CX = \add R$ be a \emph{rigid} subcategory (i.e.\ $\CC(\CX,\sus \CX) = 0$) contained in the cluster tilting subcategory $\CT$. 
In this generalisation, Holm and J\o{}rgensen make the following substitutions; see Section~\ref{sec:re-modified-CC-map} for more details. 
\begin{enumerate}[(\roman*)]
	\item $\eps$ is replaced with a map 
$\ol{\eps}\colon K_{0}^{\sp}(\CT)/N \to A$, where
\begin{center}
	$N 
	= 
	\Braket{
	[X] - [Y] |   
	\begin{array}{l}
	\begin{tikzcd}[column sep=0.3cm,ampersand replacement=\&]
	T \arrow{r}\& Y \arrow{r} \& T^{*}\arrow{r} \& \sus T,
	\end{tikzcd}
	\begin{tikzcd}[column sep=0.3cm,ampersand replacement=\&]
	T^{*}\arrow{r} \& X \arrow{r} \& T\arrow{r} \& \sus T^{*}
	\end{tikzcd}\\
	\text{are exchange triangles with } 
	T\in\ind\CT\setminus\ind\CX 
	\end{array}
	}$.
\end{center}

\item $\alpha$ is replaced by $\ol{\eps} Q \indx{\CT}$, where $Q\colon K_{0}^{\sp}(\CT)\to K_{0}^{\sp}(\CT)/N$ is the canonical surjection.

\item $\beta$ is replaced by $\ol{\eps} \nu$, where $\nu\colon K_{0}(\rmod{\CX})\to K_{0}^{\sp}(\CT)/N$ is the unique homomorphism making 
\begin{center}
\vspace{-4pt plus 1pt minus 1pt}
$
\begin{tikzcd}
K_{0}(\rmod{\CT}) \arrow{r}{\ol{\phi}}\arrow{d}[swap]{\kappa}& K_{0}^{\sp}(\CT) \arrow{d}{Q}\\
K_{0}(\rmod{\CX}) \arrow[dotted]{r}{\nu}& K_{0}^{\sp}(\CT)/N
\end{tikzcd}
$
\end{center}
commute, in which $\kappa$ is induced by the inclusion $\CX \sse \CT$. 

\item $G_{T}$ and $\text{Gr}_{e}(G_{T}(C))$ are replaced by similarly defined analogues $G_{R}$ and $\text{Gr}_{e}(G_{R}(C))$, respectively. 
\end{enumerate}
With these adjustments, \eqref{eqn:modified-CC-map} is then called the \emph{modified Caldero-Chapoton map} in \cite{HolmJorgensen-generalized-friezes-and-a-modified-caldero-chapoton-map-depending-on-a-rigid-object-2}. 
Note that if $\CX = \CT$ is cluster tilting then $N=0$ and the domains of $\eps$ and $\ol{\eps}$ agree. 
Furthermore, \cite[Thm.\ A]{HolmJorgensen-generalized-friezes-and-a-modified-caldero-chapoton-map-depending-on-a-rigid-object-2} showed that the modified Caldero-Chapoton map $\rho$ is a \emph{generalised frieze} (see \cite[Def.\ 3.4]{HolmJorgensen-generalized-friezes-and-a-modified-caldero-chapoton-map-depending-on-a-rigid-object-1}) under suitable assumptions, thereby demonstrating this theory has particular implications in combinatorics. This map also recovers the combinatorial generalised friezes of \cite{BessenrodtHolmJorgensen-generalized-frieze-pattern-determinants-and-higher-angulations-of-polygons}; see also \cite{CanakciJorgensen-driezes-weak-friezes-and-T-paths}. 

Although $K_{0}^{\sp}(\CT)/N$ is observably a correct group to consider as the domain for $\ol{\eps}$, there has also been other considerable interest in this quotient. For instance, Palu proved that $K_{0}^{\sp}(\CT)/N$ recovers the Grothendieck group of the triangulated category $\CC$ when $\CX = 0$ (see \cite[Thm.\ 10]{Palu-grothendieck-group-and-generalized-mutation-rule-for-2-calabi-yau-triangulated-categories}). In this case, $N$ is generated by relations coming from all pairs of exchange triangles for indecomposable objects in $\CT$. These results were generalised to the higher angulated case by Fedele in \cite{Fedele-grothendieck-groups-of-triangulated-categories-via-cluster-tilting-subcategories}. 
However, there is not yet an explanation of the significance of $K_{0}^{\sp}(\CT)/N$, even though it shows up in different contexts, indicating its importance and some deeper connection. 

Motivated by this, and inspired by an approach taken in \cite{PadrolPaluPilaudPlamondon-associahedra-for-finite-type-cluster-algebras-and-minimal-relations-between-g-vectors}, in the present article we remedy this by giving an interpretation of $K_{0}^{\sp}(\CT)/N$ as the Grothendieck group of a certain \emph{extriangulated} category dependent on $\CX$; see Theorem~\ref{thmx:A}. In particular, this shows that this group is indeed \emph{the} correct group to use in the construction of the modified Caldero-Chapoton map and why. 
Conversely, the interest in $K_{0}^{\sp}(\CT)/N$ and its appearance as the Grothendieck group of an extriangulated category illustrates the relevance of extriangulated and, more generally, higher exangulated structures. 
Although we prove our results in the more general setting of higher homological algebra, we will remain in the classical setting for the purposes of this introduction, which is also the setting of Section~\ref{sec:re-modified-CC-map}. 

Since $\CC$ is triangulated, we have that $(\CC,\BE,\fs)$ is an extriangulated category, where $\BE = \CC(-,\sus -)$ is a biadditive functor and $\fs$ is the (canonical) realisation of $\BE$ (see \cite[Prop.\ 3.22]{NakaokaPalu-extriangulated-categories-hovey-twin-cotorsion-pairs-and-model-structures} or Example~\ref{example:n+2-angulated-category-is-n-exangulated}). 
There is a subfunctor of $\BE$ given as follows. 
Define $\BE_{\CX}$ on objects by 
\[
\BE_{\CX}(C,A) 
	= \set{ \delta \in \CC(C,\sus A) 
			| \delta\circ\gamma = 0 \text{ for all } \gamma\colon X\to C \text{ with } X\in\CX}.
\]
Then, by equipping $\CC$ with $\BE_{\CX}$ instead, we obtain another extriangulated structure 
$(\BE_{\CX},\fs_{\CX})$ on $\CC$, where $\fs_{\CX}$ is the restriction of $\fs$ to $\BE_{\CX}$. 
The group $K_{0}(\CC,\BE_{\CX},\fs_{\CX})$ is then defined to be the quotient of $K_{0}^{\sp}(\CC)$ by the subgroup generated by elements $[A]-[B]+[C]$, where 
$\begin{tikzcd}[column sep = 0.5cm]
A\arrow{r} & B\arrow{r} & C\arrow{r}{\delta} & \sus A
\end{tikzcd}$ 
is a triangle in $\CC$ with $\delta\in\BE_{\CX}(C,A)$. 
See Section~\ref{sec:n-exangulated-categories} for more details. 
As a special case of Theorem~\ref{thm:G-R-is-an-isomorphism}, we have: 

\begin{thmx}
\label{thmx:A}

There is an isomorphism $K_{0}^{\sp}(\CT)/N \iso K_{0}(\CC,\BE_{\CX},\fs_{\CX})$. 

\end{thmx}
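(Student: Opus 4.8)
The plan is to exhibit two mutually inverse group homomorphisms between $K_{0}^{\sp}(\CT)/N$ and $K_{0}(\CC,\BE_{\CX},\fs_{\CX})$. In one direction, since $\indx{\CT}$ is additive on finite direct sums it induces a homomorphism $K_{0}^{\sp}(\CC)\to K_{0}^{\sp}(\CT)$, and composing with the canonical surjection $Q\colon K_{0}^{\sp}(\CT)\to K_{0}^{\sp}(\CT)/N$ gives $\psi\colon K_{0}^{\sp}(\CC)\to K_{0}^{\sp}(\CT)/N$, $\psi([C])=Q(\indx{\CT}(C))$. In the other direction, the inclusion $\CT\sse\CC$ induces $K_{0}^{\sp}(\CT)\to K_{0}^{\sp}(\CC)\to K_{0}(\CC,\BE_{\CX},\fs_{\CX})$, $[T']\mapsto[T']$. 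I will show that the latter annihilates $N$, hence descends to $\Phi\colon K_{0}^{\sp}(\CT)/N\to K_{0}(\CC,\BE_{\CX},\fs_{\CX})$; that $\psi$ annihilates the defining relations of $K_{0}(\CC,\BE_{\CX},\fs_{\CX})$, hence descends to $\ol{\psi}\colon K_{0}(\CC,\BE_{\CX},\fs_{\CX})\to K_{0}^{\sp}(\CT)/N$; and finally that $\ol{\psi}$ and $\Phi$ are inverse to one another.

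To see that $\psi$ descends to $\ol{\psi}$, I must show that for every $\fs_{\CX}$-conflation $A\to B\to C\xrightarrow{\delta}\sus A$ — i.e.\ a triangle of $\CC$ with $\delta\in\BE_{\CX}(C,A)$ — one has $\indx{\CT}(A)-\indx{\CT}(B)+\indx{\CT}(C)\in N$. By the index addition formula \cite[Thm.\ 4.4]{Jorgensen-tropical-friezes-and-the-index-in-higher-homological-algebra}, this element equals $\ol{\phi}([M_{\delta}])$, where $M_{\delta}$ is a finitely presented $\CT$-module functorially associated with the triangle; concretely one may take $M_{\delta}=\cok\big(\CC(-,B)|_{\CT}\to\CC(-,C)|_{\CT}\big)$, which by the long exact sequence equals $\im\big(\delta_{\ast}\colon\CC(-,C)|_{\CT}\to\CC(-,\sus A)|_{\CT}\big)$. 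Now $\delta\in\BE_{\CX}(C,A)$ says precisely that $\delta\circ\gamma=0$ for every $\gamma\colon X\to C$ with $X\in\CX$, i.e.\ that $\delta_{\ast}$ vanishes after restriction along $\CX\sse\CT$; hence $M_{\delta}$ restricts to the zero module on $\CX$, so $\kappa([M_{\delta}])=0$ in $K_{0}(\rmod{\CX})$. Since $\nu\kappa=Q\ol{\phi}$ by the definition of $\nu$, we get $Q\ol{\phi}([M_{\delta}])=\nu(0)=0$, so $\ol{\phi}([M_{\delta}])\in\ker Q=N$, as required.

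For $\Phi$ to be well defined I check that each generator $[X]-[Y]$ of $N$ maps to $0$. Fix $T\in\ind\CT\setminus\ind\CX$ with its two exchange triangles $T\to Y\to T^{\ast}\xrightarrow{\delta}\sus T$ and $T^{\ast}\to X\to T\xrightarrow{\delta'}\sus T^{\ast}$; here $X$ and $Y$ are finite direct sums of indecomposable summands of the cluster tilting object not isomorphic to $T$, so in particular $X,Y\in\CT$, and since $T^{\ast}$ is a summand of it also $X,Y\in\mu_{T}\CT$. As $T\notin\ind\CX$ we have $\CX\sse\CT\cap\mu_{T}\CT$, and both $\CT$ and $\mu_{T}\CT$ are rigid; hence for $\gamma\colon X'\to T^{\ast}$ with $X'\in\CX$ we get $\delta\circ\gamma\in\CC(X',\sus T)=0$, and for $\gamma\colon X'\to T$ with $X'\in\CX$ we get $\delta'\circ\gamma\in\CC(X',\sus T^{\ast})=0$. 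Thus both exchange triangles are $\fs_{\CX}$-conflations, giving the relations $[Y]=[T]+[T^{\ast}]$ and $[X]=[T^{\ast}]+[T]$ in $K_{0}(\CC,\BE_{\CX},\fs_{\CX})$, whence $[X]-[Y]=0$ there.

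Finally, for $T'\in\ind\CT$ the triangle $0\to T'\xrightarrow{\mathrm{id}}T'\to0$ gives $\indx{\CT}(T')=[T']$, so $\ol{\psi}\Phi([T'])=Q([T'])=[T']$ and $\ol{\psi}\Phi=\mathrm{id}$. Conversely, given $C\in\CC$, choose a right $\CT$-approximation $T^{1}\to C$ and complete it to a triangle $T^{0}\to T^{1}\to C\xrightarrow{\delta}\sus T^{0}$; then $T^{0}\in\CT$ and $\indx{\CT}(C)=[T^{1}]-[T^{0}]$, while every $\gamma\colon X'\to C$ with $X'\in\CX\sse\CT$ factors through $T^{1}\to C$, so $\delta\circ\gamma=0$ and $\delta\in\BE_{\CX}(C,T^{0})$; hence this triangle is an $\fs_{\CX}$-conflation and $[C]=[T^{1}]-[T^{0}]$ in $K_{0}(\CC,\BE_{\CX},\fs_{\CX})$. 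Therefore $\Phi\ol{\psi}([C])=\Phi Q([T^{1}]-[T^{0}])=[T^{1}]-[T^{0}]=[C]$ and $\Phi\ol{\psi}=\mathrm{id}$. I expect the main obstacle to lie in the first step: rather than invoking the index addition formula as a black box one must — especially in the $d$-exangulated generality of Theorem~\ref{thm:G-R-is-an-isomorphism}, with the higher index replacing $\indx{\CT}$ and $(d+2)$-angles replacing triangles — set it up carefully and pin down the correction module $M_{\delta}$ and its restriction to $\CX$; once that is available, the remaining verifications are short.
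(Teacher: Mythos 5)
Your proof is correct for Theorem~\ref{thmx:A} (the $d=1$ case) and uses the same essential ingredients as the paper's: the index, its additivity up to the error homomorphism $\ol{\phi}$ (=\,$\theta$ of Theorem~\ref{thm:Euler-sum-of-d-indices-equals-theta-of-image-of-connecting-map}), the restriction map $\kappa$, and---crucially---the observation that $\delta\in\BE_{\CX}(C,A)$ forces the error module $\Im F_{\CT}(\delta)$ to restrict to zero on $\CX$. The difference is organizational. The paper constructs $G_{\CX}$ (essentially your $\Phi$), observes surjectivity is immediate, and then proves injectivity by expanding $L(g)\in\CI_{\CX}$ as a signed sum of Euler relations, applying $\dindx{\CT}$ and $\theta$, and finishing with a composition-series argument showing each $\theta([\Im F_{\CT}(\gamma_j)])$ lies in $N_{\CX}$. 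You instead exhibit the explicit candidate inverse $\ol{\psi}=Q\circ\indx{\CT}$, verify well-definedness (the $\fs_{\CX}$-Euler relations die under $\psi$), and check $\Phi\ol{\psi}=\ol{\psi}\Phi=\mathrm{id}$ on generators. Be aware that the composition-series step is not avoided but repackaged: for $\nu\kappa=Q\ol{\phi}$ to hold one needs $Q\ol{\phi}$ to kill $\ker\kappa$, i.e.\ $\ol{\phi}([\ol{S}_T])\in N$ for $T\in\ind\CT\setminus\ind\CX$ (Proposition~\ref{prop:theta-of-simple-in-modT-is-in-N-R} in the paper, and an argument in Holm--J\o{}rgensen for $d=1$). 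Your version is cleaner at the triangulated level precisely because $\nu$ is available off the shelf; the paper proves Theorem~\ref{thmx:A} as a corollary of Theorem~\ref{thm:G-R-is-an-isomorphism}, for which the higher index, the higher error homomorphism $\theta$, the exchange $(d+2)$-angles, and Proposition~\ref{prop:theta-of-simple-in-modT-is-in-N-R} must all be established directly, which---as you correctly anticipate at the end---is where the substantive work lies.
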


In case $\CX = \CT$, the isomorphism $K_{0}(\CC,\BE_{\CT},\fs_{\CT}) \to K_{0}^{\sp}(\CT)$ is induced by the index $\indx{\CT}\colon\obj(\CC) \to K_{0}^{\sp}(\CT)$; see \cite[Prop.\ 4.11]{PadrolPaluPilaudPlamondon-associahedra-for-finite-type-cluster-algebras-and-minimal-relations-between-g-vectors}. 
The authors are very grateful to P.-G. Plamondon, who pointed out this result from \cite{PadrolPaluPilaudPlamondon-associahedra-for-finite-type-cluster-algebras-and-minimal-relations-between-g-vectors} to the first author. This led to the development of Theorem~\ref{thm:d-index-gives-isomorphism-G-group-of-T-induced-exangulated-structure-on-S-to-split-G-group-of-T}, which is a higher homological version. 
This cluster tilting special case suggests that we can think of the surjection 
$Q_{\CX}\colon K_{0}^{\sp}(\CC) \to K_{0}(\CC,\BE_{\CX},\fs_{\CX})$ 
as an index with respect to $\CX$. 
Furthermore, there is also an additivity formula with error-term for the homomorphism $Q_{\CX}$ (see 
\cite{JorgensenShah-the-index-with-respect-to-a-rigid-subcategory}). 
Moreover, using $Q_{\CX}$ and a map that also behaves like an error-term map under suitable conditions (see Remark~\ref{rem:psi-is-theta-X-mu}), we give a new version of the Caldero-Chapoton map, which we call the \emph{$\CX$-Caldero-Chapoton map}; see Definition~\ref{def:X-CC-map}.
This recovers the modified Caldero-Chapoton map from \cite{HolmJorgensen-generalized-friezes-and-a-modified-caldero-chapoton-map-depending-on-a-rigid-object-2}.

This paper is organised as follows. 
In Section~\ref{sec:n-exangulated-categories}, we cover the definitions and results we will need on $n$-exangulated categories, including their Grothendieck groups and $n$-exangulated structures induced by subcategories. 
In Section~\ref{sec:cluster-tilting-subcategories} we show, for an Oppermann-Thomas cluster tilting subcategory $\CT$ of a $(d+2)$-angulated category $\CS$, there is an isomorphism between the split Grothendieck group of $\CT$ and the Grothendieck group of $\CS$ with $d$-exangulated structure induced by $\CT$. 
In Section~\ref{sec:K0TmodNX-as-a-grothendieck-group} we prove the higher version of Theorem~\ref{thmx:A}.
Lastly, in Section~\ref{sec:re-modified-CC-map} we apply our results to the classical setting and define the $\CX$-Caldero-Chapoton map.

\subsection{Conventions and notation}
\label{sec:conventions-notation}

\begin{enumerate}[label=(\roman*)]

	\item $\Ab$ denotes the category of all abelian groups. 
	
	\item By an \emph{additive subcategory}, we will mean a full subcategory that is closed under isomorphisms, direct sums and direct summands. Note that if $\CB$ is an additive subcategory of $\CA$ and $\CA$ is idempotent complete, then $\CB$ is also idempotent complete since it is closed under direct summands.
	
	\item Let $\CA$ be an additive category. For a subcategory $\CB$ of $\CA$ that is closed under finite direct sums, $[\CB]$ denotes the two-sided ideal of $\CA$ 
	consisting of morphisms factoring through $\CB$. 

	Fix a skeleton $\CA^{\skel}$ of $\CA$. We denote by $\ind \CA$ the class of objects of $\CA^{\skel}$ consisting of only indecomposable objects. 
	Furthermore, if $\CB\sse\CA$ is an additive subcategory, we assume that skeletons are chosen in a compatible way, i.e.\ $\CB^{\skel}\sse\CA^{\skel}$, so in particular $\ind\CB\sse\ind\CA$.

	\item For a field $\field$, we denote by $\rMod{\field}$ (respectively, $\rmod{\field}$) the category of all (respectively, finite-dimensional) $\field$-vector spaces. 
	
	\item Suppose $\CC$ is a skeletally small, $\field$-linear category. 
	We denote by $\rMod{\CC}$ the category of all $\field$-linear, contravariant functors $\CC\to \rMod{\field}$. 
	The category $\rMod{\CC}$ is $\field$-linear and abelian (see, for example, \cite[Thm.\ 10.1.3]{Prest-purity-spectra-localisation} or \cite[Sec.\ 3, Thm.\ 4.2]{Popescu-abelian-cats-with-apps-to-rings-and-modules}). 
	
	A functor $M\in\rMod{\CC}$ is called \emph{finitely presented} if there is an exact sequence 
	$\begin{tikzcd}[column sep=0.5cm]
	\CC(-,A) \arrow{r}& \CC(-,B) \arrow{r}& M \arrow{r}& 0
	\end{tikzcd}$
	for some $A,B\in\CC$; see \cite[p.\ 155]{Beligiannis-on-the-freyd-cats-of-additive-cats}. 
	(These functors have also been called `coherent' in e.g.\ \cite{Auslander-coherent-functors} and \cite{IyamaYoshino-mutation-in-tri-cats-rigid-CM-mods}. However, there is a more general notion of a coherent functor, see \cite[Def.\ B.5]{Fiorot-n-quasi-abelian-categories-vs-n-tilting-torsion-pairs}. This notion and what we call finitely presented agree when, for example, $\CC$ is also idempotent complete and has weak kernels. See \cite[App.\ B]{Fiorot-n-quasi-abelian-categories-vs-n-tilting-torsion-pairs} and the references therein for more details.) 
	We denote by $\rmod{\CC}$ the full subcategory of $\rMod{\CC}$ consisting of all the finitely presented 
	$\field$-linear, contravariant functors $\CC\to \rMod{\field}$. 
	The category $\rmod{\CC}$ is also $\field$-linear. 
	If $\CC$ is also idempotent complete and has weak kernels (e.g.\ $\CC$ is a contravariantly finite, additive subcategory of an idempotent complete, triangulated category, see \cite[Def.\ 2.9]{IyamaYoshino-mutation-in-tri-cats-rigid-CM-mods}), then $\rmod{\CC}$ is abelian and the inclusion functor $\rmod{\CC} \to \rMod{\CC}$ is exact. 
	See \cite[Sec.\ III.2]{Auslander-representation-dimension-of-artin-algebras-QMC}; see also \cite[Sec.\ 2]{Auslander-coherent-functors} and \cite{Auslander-selected-works-part-1}.

Finally, $\fl{\CC}$ denotes the full subcategory of $\rMod{\CC}$ consisting of objects of \emph{finite length} (see \cite[Sec.\ 5]{Krause-KS-cats-and-projective-covers}). 
	
	\item
	\label{1-1-vi}
	Let $\CS$ be a $\field$-linear category and suppose $\DD$ is a skeletally small, additive subcategory of $\CS$. 
	We let $F_{\DD}\colon \CS \to \rmod{\DD}$ denote the covariant functor defined on objects by sending $X\in\CS$ to $\restr{\CS(-, X)}{\DD}$. 
	Denote by $\DD^{\perp_{0}}\deff \Ker F_{\DD}$ the full subcategory of $\CS$ consisting of objects $X$ for which $\CS(\DD,X)=0$.
	
\end{enumerate}


\section{\texorpdfstring{$n$}{n}-exangulated categories}
\label{sec:n-exangulated-categories}
\subsection{The definition}
\label{sec:definition-of-n-exangulated-category}
Let $n\geq 1$ be a positive integer. 
Exact, abelian and triangulated categories (see \cite{Quillen-higher-algebraic-k-theory-I}, \cite{Grothendieck-tohoku-paper-abelian-categories} and \cite{Verdier-derived-cats-of-abelian-cats}, respectively) form the core of homological algebra. Indeed, in Sections~\ref{sec:cluster-tilting-subcategories} through \ref{sec:re-modified-CC-map} in this paper we are in a setting where there is some ambient triangulated category; see Setup~\ref{setup:initial-d-setup-C-T-S}. 
These ideas have been generalised to higher dimensions by Geiss--Keller--Oppermann in \cite{GeissKellerOppermann-n-angulated-categories}, where \emph{$(n+2)$-angulated} categories are introduced, and by Jasso in \cite{Jasso-n-abelian-and-n-exact-categories}, where \emph{$n$-abelian} and \emph{$n$-exact} categories are defined. 
In a parallel direction, the theories of exact and triangulated categories have been unified with the introduction of \emph{extriangulated} categories by Nakaoka--Palu in \cite{NakaokaPalu-extriangulated-categories-hovey-twin-cotorsion-pairs-and-model-structures}. 

Naturally, the counterpart in higher homological algebra of extriangulated categories has also been developed---namely, \emph{$n$-exangulated} categories as introduced by Herschend--Liu--Nakaoka in \cite{HerschendLiuNakaoka-n-exangulated-categories-I-definitions-and-fundamental-properties}. 
In this subsection, we briefly recall the definition of an $n$-exangulated category (see Definition~\ref{def:n-exangulated-category}). For more details, we refer the reader to \cite[Sec.\ 2]{HerschendLiuNakaoka-n-exangulated-categories-I-definitions-and-fundamental-properties}. 
We also recall how one can view an $(n+2)$-angulated category as an $n$-exangulated category (see Example~\ref{example:n+2-angulated-category-is-n-exangulated}).

Let $\CC$ be an additive category equipped with a biadditive functor $\BE\colon\CC^{\op}\times\CC\to \Ab$. 
A prototypical example when $n=1$ of an $n$-exangulated category (or an extriangulated category in the sense of \cite{NakaokaPalu-extriangulated-categories-hovey-twin-cotorsion-pairs-and-model-structures}) 
is an exact category. 
In this case, $\BE$ models the functor $\Ext^{1}$, which motivates the following terminology. 

\begin{defn}

\begin{enumerate}[(\roman*)]

	\item For any $A,C\in\CC$, any element of $\BE(C,A)$ is called an \emph{extension}.
	
	\item Suppose $\delta\in\BE(C,A)$ is an extension, and let $a\colon A\to A'$ and $c\colon C'\to C$ be arbitrary morphisms in $\CC$. 
	We define 
	$a_{*}\delta:=\BE(C,a)(\delta)\in\BE(C,A')$ and 
	$c^{*}\delta:=\BE(c,A)(\delta)\in\BE(C',A).$
	
	\item Suppose $\delta\in \BE(C,A)$ and $\eta\in\BE(D,B)$ are extensions. 
	A \emph{morphism of extensions} $\delta\to\eta$ is a pair $(a,c)$ of morphisms $a\colon A\to B$ and $c\colon C\to D$ in $\CC$, such that $ a_{*}\delta  = c^{*}\eta$.

\end{enumerate}
\end{defn}

Let $\delta\in\BE(C,A)$ be an extension. 
By the Yoneda Lemma, there is a natural transformation
$\delta^{\sharp}\colon \CC(A,-) \Rightarrow \BE(C,-)$, given by 
$(\delta^{\sharp})_{B}(a)=a_{*}\delta$ 
for each object $B\in\CC$ and each morphism $a\colon A\to B$. 
Dually, there is also a natural transformation 
$\delta_{\sharp}\colon \CC(-,C)\Rightarrow \BE(-,A)$.

We denote the \emph{category of (co)complexes} in $\CC$ by $\com_{\CC}$. 
Note that we use cohomological numbering throughout. 
Since we will be interested in equipping categories with certain classes of complexes with $n+2$ terms, it is helpful to introduce the following: 
the full subcategory of complexes concentrated in (cohomological) degrees $0,1,\ldots, n, n+1$ will be denoted by $\com_{\CC}^{n}$. 
Since a morphism $f^{\combul}\colon X^{\combul} \to Y^{\combul}$ in $\com_{\CC}^{n}$ will be of the form $(\ldots,0,0,f^{0},f^{1},\ldots,f^{n+1},0,0,\ldots)$, we will simply write $f^{\combul}=(f^{0},\ldots,f^{n+1})$. 
Moreover, we will restrict our attention to objects of $\com_{\CC}^{n}$ that also have a connection with an extension.

\begin{defn}
\label{def:n-exangles}

Let $X^{\combul}\in\com_{\CC}^{n}$ and $\delta\in\BE(X^{n+1},X^{0})$. The pair $\langle X^{\combul},\delta\rangle$ is an $n$-\emph{exangle} if
\[
\begin{tikzcd}[column sep=1.6cm]
\CC(-,X^{0})\arrow[Rightarrow]{r}[yshift=2pt]{\CC(-,d_{X}^{0})}&\CC(-,X^{1})\arrow[Rightarrow]{r}[yshift=2pt]{\CC(-,d_{X}^{1})}&\cdots\arrow[Rightarrow]{r}[yshift=2pt]{\CC(-,d_{X}^{n})}&\CC(-,X^{n+1})\arrow[Rightarrow]{r}[yshift=2pt]{\delta_{\sharp}}& \BE(-,X^{0})
\end{tikzcd}
\] 
and 
\[
\hspace*{-0.1cm}
\begin{tikzcd}[column sep=1.65cm]
\CC(X^{n+1},-)\arrow[Rightarrow]{r}[yshift=2pt]{\CC(d_{X}^{n},-)}&\CC(X^{n},-)\arrow[Rightarrow]{r}[yshift=2pt]{\CC(d_{X}^{n-1},-)}&\cdots\arrow[Rightarrow]{r}[yshift=2pt]{\CC(d_{X}^{0},-)}&\CC(X^{0},-)\arrow[Rightarrow]{r}[yshift=2pt]{\delta^{\sharp}}& \BE(X^{n+1},-)
\end{tikzcd}
\] 
are exact sequences of functors.

\end{defn}

\begin{defn}

Fix objects $A,C$ in $\CC$. The (not necessarily full) subcategory $\com_{(A,C)}^{n}$ of $\com_{\CC}^{n}$ is defined as follows: 
an object of $\com_{(A,C)}^{n}$ is a complex $X^{\combul}\in\com_{\CC}^{n}$ for which $X^{0}=A$ and $X^{n+1}=C$; and 
a morphism 
$f^{\combul}\colon X^{\combul}\to Y^{\combul}$ 
in $\com_{(A,C)}^{n}$ is a commutative diagram
\[
\begin{tikzcd}
A \arrow{r}\arrow[equals]{d}{}&X^{1}\arrow{d}{f^{1}} \arrow{r}& \cdots\arrow{r}& X^{n}\arrow{d}{f^{n}}\arrow{r} & C \arrow[equals]{d}{}\\
A \arrow{r}&Y^{1} \arrow{r}& \cdots\arrow{r}& Y^{n}\arrow{r} & C
\end{tikzcd}
\]
in $\CC$.

\end{defn}

For $A,C\in\CC$ and $X^{\combul},Y^{\combul}\in\com_{(A,C)}^{n}$, the usual notion of a \emph{homotopy} between morphisms in $\com_{\CC}$ gives an equivalence relation on $\com_{(A,C)}^{n}(X^{\combul},Y^{\combul})$, which we denote by $\sim$. 
Thus, we can form a specialised homotopy category, denoted $\kom^{n}_{(A,C)}$, which has the same objects as $\com_{(A,C)}^{n}$ and, for $X^{\combul},Y^{\combul}\in\kom^{n}_{(A,C)}$, we set 
$\kom^{n}_{(A,C)}(X^{\combul},Y^{\combul})\deff \com_{(A,C)}^{n}(X^{\combul},Y^{\combul})/{\sim}$. 
Furthermore, we call $f^{\combul}\in\com_{(A,C)}^{n}(X^{\combul},Y^{\combul})$ a \emph{homotopy equivalence} if its image in $\kom^{n}_{(A,C)}(X^{\combul},Y^{\combul})$ is an isomorphism. 
In this case, the complexes $X^{\combul}$ and $Y^{\combul}$ are said to be \emph{homotopy equivalent}, and the homotopy equivalence class of $X^{\combul}$ in $\com_{(A,C)}^{n}$ is denoted  $[X^{\combul}]$. This is \emph{not} necessarily the usual homotopy equivalence class of $X^{\combul}$ in $\com_{\CC}^{n}$; see \cite[Rem.\ 2.18]{HerschendLiuNakaoka-n-exangulated-categories-I-definitions-and-fundamental-properties}.

Just as $\Ext^{1}$ groups in an abelian category can be viewed as equivalence classes of short exact sequences, in an $n$-exangulated category homotopy equivalence classes will be associated to extensions.

\begin{defn}
\label{def:exact-realisation}

Let $\fs$ be a correspondence, which assigns to each $\delta\in\BE(C,A)$, for each $A,C\in\CC$, a homotopy equivalence class $\fs(\delta)=[X^{\combul}]$ for some  $X^{\combul}\in\com_{(A,C)}^{n}$. 
Then $\fs$ is called an \emph{exact realisation of $\BE$} if the following three conditions are met. 

\begin{enumerate}[label=(R\arabic*)]
\setcounter{enumi}{-1}

    \item\label{R0}
    For each $\delta\in\BE(C,A), \eta\in\BE(D,B)$ such that $\fs(\delta)=[X^{\combul}],\ \fs(\eta)=[Y^{\combul}]$, and for each morphism $(a,c)\colon \delta\to\eta$ of extensions, there exists a morphism $f^{\combul}=(f^{0},\ldots,f^{n+1})\in\com_{\CC}^{n}(X^{\combul},Y^{\combul})$ \emph{realising} $(a,c)$, i.e.\ with $f^{0}=a$ and $f^{n+1}=c$.

    \item\label{R1}
    If $\fs(\delta)=[X^{\combul}]$, then $\langle X^{\combul},\delta\rangle$ is an $n$-exangle.

    \item\label{R2}
    Let $A\in\CC$. Consider the group identity elements $_{A}0_{0}\in \BE(0,A)$ and $_{0}0_{A}\in\BE(A,0)$. Then 
    \begin{center}
    \vspace{-6pt plus 1pt minus 1pt}
    $\fs(_{A}0_{0})=[
    \begin{tikzcd}[column sep=0.7cm] A\arrow{r}{1_{A}}&A\arrow{r}&0\arrow{r}&\cdots\arrow{r}&0\end{tikzcd}
    ]$
    \end{center}
        \vspace{-4pt plus 1pt minus 1pt}
    and 
    \begin{center}
\vspace{-6pt plus 1pt minus 1pt}
    $\fs(_{0}0_{A})=[
    \begin{tikzcd}[column sep=0.7cm] 0\arrow{r}&\cdots\arrow{r}&0\arrow{r}&A\arrow{r}{1_{A}}&A\end{tikzcd}
    ].$
    \end{center}
    
\end{enumerate}

\end{defn}

If $\fs$ is an exact realisation of $\BE$ and 
\[
\fs(\delta) 
	= [X^{\combul}] 
	= [
	\begin{tikzcd}[column sep=0.6cm]
	X^{0} \arrow{r}{d_{X}^{0}}& X^{1} \arrow{r}& \cdots \arrow{r}& X^{n}\arrow{r}{d_{X}^{n}}& X^{n+1}
	\end{tikzcd}
	],
\]
then we call $d_{X}^{0}$ an \emph{$\fs$-inflation} and $d_{X}^{n}$ an \emph{$\fs$-deflation}. 

In order to simplify the statement of the main definition in this subsection, we recall the following.

\begin{defn}
\label{def:mapping-cone-as-in-Jasso}

\cite[Def.\ 2.11]{Jasso-n-abelian-and-n-exact-categories}
The \emph{mapping cone} $M^{\combul}\deff \cone(f)^{\combul}$ of a morphism  $f^{\combul}\in\com_{\CC}^{n-1}(X^{\combul},Y^{\combul})$ is the complex
\[
\begin{tikzcd}
X^{0}\arrow{r}{d_{M}^{-1}}& X^{1}\oplus Y^{0} \arrow{r}{d_{M}^{0}}&X^{2}\oplus Y^{1}\arrow{r}{d_{M}^{1}}& \cdots\arrow{r}{d_{M}^{n-2}}&X^{n}\oplus Y^{n-1} \arrow{r}{d_{M}^{n-1}}& Y^{n}
\end{tikzcd}
\]
in $\com_{\CC}^{n}$, where $d_{M}^{-1}\deff \begin{psmallmatrix}-d_{X}^{0}\\f^{0} \end{psmallmatrix}$,
\[
d_{M}^{i}\deff
\begin{pmatrix}-d_{X}^{i+1} & 0\\f^{i+1} & d_{Y}^{i} \end{pmatrix}
\]
for $i\in\{0,\ldots,n-2\}$, and $d_{M}^{n-1}\deff(\,f^{n}\;\;\, d_{Y}^{n-1})$.

\end{defn}

With this terminology in place, we can define an $n$-exangulated category.

\begin{defn}
\label{def:n-exangulated-category}

An \emph{$n$-exangulated category} is a triple $(\CC,\BE,\fs)$, where 
$\CC$ is an additive category, 
$\BE\colon\CC^{\op}\times\CC\to \Ab$ is a biadditive functor 
and $\fs$ is an exact realisation of $\BE$, 
	if the following are satisfied.

\begin{enumerate}[label=(EA\arabic*),wide=0pt, leftmargin=55pt, labelwidth=50pt, labelsep=5pt, align=right]

    \item\label{nEA1}
    The classes of $\fs$-inflations and of $\fs$-deflations are each closed under composition. 
        
    \item\label{nEA2}
    Suppose there are $\delta\in\BE(D,A)$ and $c\in\CC(C,D)$, with $\fs(c^{*}\delta)=[X^{\combul}]$ and $\fs(\delta)=[Y^{\combul}]$ for some $X^{\combul},Y^{\combul}\in\com_{\CC}^{n}$. 
	Then there exists a morphism 
	$f^{\combul}=(1_{A},f^{1},\ldots,f^{n},c)\colon X^{\combul}\to Y^{\combul}$ 
	realising $(1_{A},c) \colon c^{*}\delta \to \delta$, i.e.\ a commutative diagram 
	\begin{center}
	$
	\begin{tikzcd}
	c^{*}\delta \arrow{d}[swap]{(1_{A},c)} & A = X^{0} \arrow{r}{d_{X}^{0}} \arrow[equals]{d}& X^{1} \arrow{r}{} \arrow{d}{f^{1}}&\cdots\arrow{r}& X^{n} \arrow{r}{} \arrow{d}{f^{n}}& X^{n+1} = C\phantom{,} \arrow[]{d}{c}\\
	\delta & A = Y^{0} \arrow{r}{} & Y^{1} \arrow{r}{} &\cdots\arrow{r}& Y^{n} \arrow{r}{} & Y^{n+1} = D,
	\end{tikzcd}
	$
	\end{center}
	such that $\fs((d_{X}^{0})_{*}\delta)=[ \cone(\wh{f})^{\combul}]$, where $\wh{f}^{\combul}=(f^{1},\ldots,f^{n},c)$. 
    
    \item[(EA$2)^{\op}$]\label{nEA2op}
    Dual of~\ref{nEA2}.
    
\end{enumerate}

\end{defn}

If $(\CC,\BE,\fs)$ is an $n$-exangulated category and $\fs(\delta)=[X^{\combul}]$ for some extension $\delta\in\BE(C,A)$ and $X^{\combul}\in\com_{(A,C)}^{n}$, then $X^{\combul}$ is said to be an \emph{$\fs$-conflation} and $\langle X^{\combul},\delta\rangle$ is called an \emph{$\fs$-distinguished} $n$-exangle.

We close this subsection by recalling how an $(n+2)$-angulated category carries an $n$-exangulated structure.

\begin{example}
\label{example:n+2-angulated-category-is-n-exangulated}

Suppose $(\CC,\sus^{n},\pentago)$ is an $(n+2)$-angulated category (in the sense of \cite{GeissKellerOppermann-n-angulated-categories}). Thus, $\sus^{n}$ is an automorphism of $\CC$ and $\pentago$ is a collection of $(n+2)$-angles. Using $\sus^{n}$, we can define an additive bifunctor $\BE\colon \CC^{\op}\times\CC\to\Ab$ as follows. 
For objects $X^{0},X^{n+1}\in\CC$ put $\BE(X^{n+1},X^{0})\deff \CC(X^{n+1},\sus^{n} X^{0})$ 
and, for any $f\colon Y^{n+1}\to X^{n+1}$ and $g\colon X^{0}\to Y^{0}$, 
the morphism $\BE(f,g)\colon \BE(X^{n+1},X^{0}) \to \BE(Y^{n+1},Y^{0})$ is given by $\BE(f,g)(\delta)\deff (\sus^{n} g)\circ \delta\circ f$. 

Now let $\delta\in\BE(X^{n+1},X^{0})$ be some extension. Since $\CC$ is an $(n+2)$-angulated category, there is an $(n+2)$-angle of the form 
$X^{0} \to \cdots \to X^{n+1} \overset{\delta}{\to} \sus^{n} X^{0}$. 
Setting  
$\fs(\delta) = [X^{0}\to\cdots\to X^{n+1}]$, 
it can then be checked that this assignment gives an exact realisation of $\BE$. 
Moreover, it can also be shown that $(\CC,\BE,\fs)$ is an $n$-exangulated category; see \cite[Subsec.\ 4.2]{HerschendLiuNakaoka-n-exangulated-categories-I-definitions-and-fundamental-properties} for more details.
\end{example}


\subsection{An \texorpdfstring{$n$}{n}-exangulated structure induced by a subcategory}
\label{sec:induced-exangulated-structure-by-subcategory}

Suppose $(\CC, \BE,\fs)$ is an $n$-exangulated category and let $\DD$ be a full subcategory of $\CC$. 
We first recall how to obtain an $n$-exangulated category $(\CC, \BE_{\DD},\fs_{\DD})$ using the relative theory developed in \cite{HerschendLiuNakaoka-n-exangulated-categories-I-definitions-and-fundamental-properties}. 
Our main goal in this subsection is to show that $(\CC, \BE_{\DD},\fs_{\DD})$ is an $n$-exangulated subcategory (in the sense of \cite{Haugland-the-grothendieck-group-of-an-n-exangulated-category}) of $(\CC, \BE,\fs)$.

\begin{defn}

\cite[Def.\ 3.7, Def.\ 3.10]{HerschendLiuNakaoka-n-exangulated-categories-I-definitions-and-fundamental-properties} 
Denote by $\SET$ the category of sets. 
Suppose $\BF \colon \CC^{\op}\times\CC \to \SET$ is a functor. 

\begin{enumerate}[label = (\roman*)]

	\item If, for all $A,A',C,C'\in\CC$, $a\in\CC(A,A')$ and $c\in\CC(C',C)$, 
	we have $\BF(C,A)\sse\BE(C,A)$ and $\BF(c,a) = \restr{\BE(c,a)}{\BF(C,A)}$, 
	then $\BF$ is called a \emph{subfunctor of $\BE$}. 
	In this case, we write $\BF\sse\BE$. 
	
	\item For any subfunctor $\BF$ of $\BE$, if $\BF(C,A)$ is a subgroup of $\BE(C,A)$ for all $A,C\in\CC$, then we say $\BF$ is an \emph{additive subfunctor of $\BE$}. Note that, in this case, we deduce $\BF \colon \CC^{\op}\times\CC \to \Ab$ is a biadditive functor.
	
	\item Suppose $\BF$ is an additive subfunctor of $\BE$, and let $\restr{\fs}{\BF}$ denote the restriction of $\fs$ to $\BF$. We say that $\BF$ is \emph{closed on the right} if, for every $\restr{\fs}{\BF}$-conflation $X^{\combul}$, and every $Z\in\CC$,  
\begin{center}
\vspace{-8pt plus 1pt minus 1pt}
$
\begin{tikzcd}[column sep = 1.5cm]
\BF(Z,X^{0}) \arrow{r}{\BF(Z,d_{X}^{0})} & \BF(Z,X^{1}) \arrow{r}{\BF(Z,d_{X}^{1})} & \BF(Z,X^{2})
\end{tikzcd}
$
\end{center}
is exact. Dually, one defines what is meant by $\BF$ is \emph{closed on the left}. 
	
\end{enumerate}

\end{defn}

It has been shown that being closed on the right is equivalent to being closed on the left for an additive subfunctor $\BF$ of $\BE$; see \cite[Lem.\ 3.15]{HerschendLiuNakaoka-n-exangulated-categories-I-definitions-and-fundamental-properties}. Therefore, $\BF$ is simply said to be \emph{closed} if it is closed on one side.

The definition of $\BE_{\DD}$ in the next proposition comes from \cite[Def.\ 3.18]{HerschendLiuNakaoka-n-exangulated-categories-I-definitions-and-fundamental-properties}, and the proof of the result itself follows from 
\cite[Sec.\ 3]{HerschendLiuNakaoka-n-exangulated-categories-I-definitions-and-fundamental-properties}.

\begin{prop}
\label{prop:d-exangulated-structure-induced-by-subcategory}

The assignment $\BE_{\DD}$ from $\CC^{\op}\times\CC$ to $\SET$ given by 
\[
\BE_{\DD}(X^{n+1},X^{0}) \deff 
\Set{ \delta\in\BE(X^{n+1},X^{0}) | (\delta_{\sharp})_{D} = 0 \text{ for every } D\in\DD} 
\]
is a closed subfunctor of $\BE$. 
The restriction $\fs_{\DD} \deff \restr{\fs}{\BE_{\DD}}$ of the realisation $\fs$ to $\BE_{\DD}$ is an exact realisation of $\BE_{\DD}$. 
Moreover, the triple $(\CC,\BE_{\DD},\fs_{\DD})$ is an $n$-exangulated category. 

\end{prop}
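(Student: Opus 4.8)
The plan is to deduce this proposition entirely from the relative homological machinery of Herschend--Liu--Nakaoka that is cited in the statement, so the real work is just verifying the hypotheses of their results for the specific subfunctor $\BE_{\DD}$. First I would check that $\BE_{\DD}$, as defined on objects by $\BE_{\DD}(X^{n+1},X^{0}) = \Set{\delta\in\BE(X^{n+1},X^{0}) | (\delta_{\sharp})_{D} = 0 \text{ for all } D\in\DD}$, is a subfunctor of $\BE$ in the sense of \cite[Def.\ 3.7]{HerschendLiuNakaoka-n-exangulated-categories-I-definitions-and-fundamental-properties}: for morphisms $a\colon A\to A'$ and $c\colon C'\to C$ one must show $a_{*}$ and $c^{*}$ send $\BE_{\DD}(C,A)$ into $\BE_{\DD}(C,A')$ and $\BE_{\DD}(C',A)$ respectively. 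For $c^{*}$ this is immediate, since $(\,(c^{*}\delta)_{\sharp})_{D}(d) = (\delta_{\sharp})_{D}(c\circ d)$, which vanishes because $\delta_{\sharp}$ already kills all maps out of objects of $\DD$ (here I use that $\delta_{\sharp}\colon \CC(-,C)\Rightarrow\BE(-,A)$ is a natural transformation). For $a_{*}$ one uses naturality of $\delta_{\sharp}$ together with the identity $(a_{*}\delta)_{\sharp} = \BE(-,a)\circ\delta_{\sharp}$, so that $((a_{*}\delta)_{\sharp})_{D} = \BE(D,a)\circ(\delta_{\sharp})_{D} = 0$. That $\BE_{\DD}(C,A)$ is a subgroup of $\BE(C,A)$, hence that $\BE_{\DD}$ is an \emph{additive} subfunctor, follows because $\delta\mapsto(\delta_{\sharp})_{D}$ is additive in $\delta$ for each fixed $D$, so $\BE_{\DD}(C,A)$ is an intersection of kernels of group homomorphisms.

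Next I would verify that $\BE_{\DD}$ is \emph{closed} in the sense recalled before the proposition. By \cite[Lem.\ 3.15]{HerschendLiuNakaoka-n-exangulated-categories-I-definitions-and-fundamental-properties} it suffices to check closedness on the right: for every $\restr{\fs}{\BE_{\DD}}$-conflation $X^{\combul}$ and every $Z\in\CC$ the sequence $\BE_{\DD}(Z,X^{0}) \to \BE_{\DD}(Z,X^{1}) \to \BE_{\DD}(Z,X^{2})$ induced by $d_{X}^{0}, d_{X}^{1}$ is exact. I expect this is where one actually has to think, but the key observation is that $\BE_{\DD}(Z,X^{i})$ is cut out inside $\BE(Z,X^{i})$ by conditions indexed by $D\in\DD$, and those conditions are preserved and reflected appropriately under pushout along $d_X^0$ because of the argument in the previous paragraph; combined with the long exact sequences in the first variable attached to the $\fs$-conflation $X^{\combul}$ (which come from $X^{\combul}$ being an $n$-exangle, Definition~\ref{def:n-exangles}), a diagram chase gives exactness of the restricted sequence. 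In fact, since $\BE_{\DD}$ is defined precisely by a vanishing-under-$\DD$ condition, one can observe that it is the largest additive subfunctor $\BF\sse\BE$ with the property that $\CC(D,-)$ sends $\restr{\fs}{\BF}$-conflations to exact sequences for all $D\in\DD$, and this characterisation makes closedness transparent; this is exactly the content of \cite[Def.\ 3.18]{HerschendLiuNakaoka-n-exangulated-categories-I-definitions-and-fundamental-properties} and I would cite it rather than re-prove it.

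Finally, once $\BE_{\DD}$ is known to be a closed additive subfunctor of $\BE$, the remaining two assertions are formal consequences of the general theory: \cite[Prop.\ 3.16]{HerschendLiuNakaoka-n-exangulated-categories-I-definitions-and-fundamental-properties} (or the relevant result in \cite[Sec.\ 3]{HerschendLiuNakaoka-n-exangulated-categories-I-definitions-and-fundamental-properties}) states that for any closed additive subfunctor $\BF$ of $\BE$, the restriction $\restr{\fs}{\BF}$ is an exact realisation of $\BF$ and $(\CC,\BF,\restr{\fs}{\BF})$ is again an $n$-exangulated category. Applying this with $\BF = \BE_{\DD}$ and $\fs_{\DD} = \restr{\fs}{\BE_{\DD}}$ completes the proof. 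The main obstacle is the closedness verification; everything else is a routine unwinding of definitions, and I would present it compactly, leaning on \cite[Def.\ 3.18, Lem.\ 3.15]{HerschendLiuNakaoka-n-exangulated-categories-I-definitions-and-fundamental-properties} and the observation that $\BE_{\DD}$ coincides with the subfunctor constructed there from the subcategory $\DD$.
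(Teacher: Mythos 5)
Your proposal is correct and takes essentially the same approach as the paper: the authors' proof is a three-line deduction citing \cite[Prop.\ 3.19, Claim 3.9, Prop.\ 3.16]{HerschendLiuNakaoka-n-exangulated-categories-I-definitions-and-fundamental-properties}, and your argument reduces to the same citations, merely spelling out the (correct, but technically redundant once Prop.\ 3.19 is invoked) verification that $\BE_{\DD}$ is an additive subfunctor stable under $a_{*}$ and $c^{*}$.
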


\begin{proof}

The first two statements follow from 
\cite[Prop.\ 3.19]{HerschendLiuNakaoka-n-exangulated-categories-I-definitions-and-fundamental-properties} and 
\cite[Claim 3.9]{HerschendLiuNakaoka-n-exangulated-categories-I-definitions-and-fundamental-properties}, respectively. 
Then \cite[Prop.\ 3.16]{HerschendLiuNakaoka-n-exangulated-categories-I-definitions-and-fundamental-properties} implies that $(\CC,\BE_{\DD},\fs_{\DD})$ is an $n$-exangulated category. 
\end{proof}

The next notion captures what it means for a functor to preserve an $n$-exangulated structure. It will allow us to show that the $n$-exangulated structure induced by $\DD$ is compatible with the original $n$-exangulated structure of $(\CC,\BE,\fs)$.
For a covariant functor $\SF\colon \CC\to\CC'$, we denote by 
$\SF^{\op}\colon \CC^{\op}\to (\CC')^{\op}$ the induced \emph{opposite} functor.

\begin{defn}
\label{def:n-exangulated-functor}

\cite[Def.\ 2.32]{Bennett-TennenhausShah-transport-of-structure-in-higher-homological-algebra}
Suppose $(\CC',\BE',\fs')$ is also an $n$-exangulated category. 
An \emph{$n$-exangulated} functor from $\CC$ to $\CC'$ is a pair $(\SF,\Gamma)$, 
where $\SF\colon \CC\to\CC'$ is an additive covariant functor and 
\[
\Gamma=\{\Gamma_{(C,A)}\}_{(C,A)\in\CC^{\op}\times\CC}\colon \BE \Longrightarrow \BE'(\SF^{\op}-,\SF-)
\]
is a natural transformation of functors from $\CC^{\op}\times\CC$ to $\Ab$, such that 
$\fs'(\Gamma_{(X^{n+1},X^{0})}(\delta))=[\SF_{\com} X^{\combul}]$ whenever $\fs(\delta)=[X^{\combul}]$. 
\end{defn}

\begin{defn}

\cite[Def.\ 3.7]{Haugland-the-grothendieck-group-of-an-n-exangulated-category}
Suppose $(\CS,\BE',\fs')$ is an $n$-exangulated category, where $\CS$ is a full subcategory of $\CC$ that is closed under isomorphisms, and let $\iota\colon \CS \to \CC$ be the inclusion functor. 
Then $(\CS,\BE',\fs')$ is called an \emph{$n$-exangulated subcategory} of $(\CC,\BE,\fs)$ 
if there is an $n$-exangulated functor $(\iota, \Gamma)$, where 
$\Gamma_{(C,A)}\colon \BE'(C,A) \to \BE(C,A)$ is the inclusion of abelian groups for all $A,C\in \CS$.

\end{defn}

We are now in a position to give the main result of this subsection.

\begin{thm}
\label{thm:subcategory-induces-d-exangulated-subcategory-of-d-exangulated-category}

Let $\DD$ be a full subcategory of $\CC$. 
The triple $(\CC,\BE_{\DD},\fs_{\DD})$ is an $n$-exangulated subcategory of $(\CC,\BE,\fs)$. 

\end{thm}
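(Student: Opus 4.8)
The plan is to verify directly that the inclusion functor $\iota\colon\CC\to\CC$ (the identity on $\CC$, viewed as going from $(\CC,\BE_{\DD},\fs_{\DD})$ to $(\CC,\BE,\fs)$) together with the family $\Gamma_{(C,A)}\colon\BE_{\DD}(C,A)\hookrightarrow\BE(C,A)$ of subgroup inclusions constitutes an $n$-exangulated functor in the sense of Definition~\ref{def:n-exangulated-functor}, since then $(\CC,\BE_{\DD},\fs_{\DD})$ is by definition an $n$-exangulated subcategory of $(\CC,\BE,\fs)$. By Proposition~\ref{prop:d-exangulated-structure-induced-by-subcategory} we already know $(\CC,\BE_{\DD},\fs_{\DD})$ is itself an $n$-exangulated category, and $\CC$ is trivially a full subcategory of itself closed under isomorphisms, so the only thing left is to check the two requirements on $(\iota,\Gamma)$.

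First I would check that $\Gamma$ is a natural transformation of functors $\CC^{\op}\times\CC\to\Ab$. Because $\BE_{\DD}$ is by construction a subfunctor of $\BE$ (Proposition~\ref{prop:d-exangulated-structure-induced-by-subcategory} asserts it is a closed subfunctor), we have $\BE_{\DD}(c,a)=\restr{\BE(c,a)}{\BE_{\DD}(C,A)}$ for all morphisms $a,c$ in $\CC$; this is precisely the statement that the square
\[
\begin{tikzcd}
\BE_{\DD}(C,A) \arrow[hook]{r}{\Gamma_{(C,A)}} \arrow{d}[swap]{\BE_{\DD}(c,a)} & \BE(C,A) \arrow{d}{\BE(c,a)}\\
\BE_{\DD}(C',A') \arrow[hook]{r}{\Gamma_{(C',A')}} & \BE(C',A')
\end{tikzcd}
\]
commutes, so naturality is immediate, and each $\Gamma_{(C,A)}$ is a group homomorphism since $\BE_{\DD}(C,A)$ is a subgroup of $\BE(C,A)$. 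Since $\iota$ is the identity functor, $\iota^{\op}$ is also the identity, so $\BE(\iota^{\op}-,\iota-)=\BE$ and there is no discrepancy to resolve on that front.

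Second I would check the compatibility with realisations: whenever $\fs_{\DD}(\delta)=[X^{\combul}]$ one needs $\fs(\Gamma_{(X^{n+1},X^{0})}(\delta))=[\iota_{\com}X^{\combul}]$. But $\fs_{\DD}$ is by definition $\restr{\fs}{\BE_{\DD}}$, so $\fs_{\DD}(\delta)=\fs(\delta)$ for every $\delta\in\BE_{\DD}(C,A)$; and $\Gamma_{(X^{n+1},X^{0})}(\delta)=\delta$ as an element of $\BE(C,A)$, while $\iota_{\com}X^{\combul}=X^{\combul}$ since $\iota$ is the identity. Hence $\fs(\Gamma_{(X^{n+1},X^{0})}(\delta))=\fs(\delta)=\fs_{\DD}(\delta)=[X^{\combul}]=[\iota_{\com}X^{\combul}]$, as required. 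Finally, $\Gamma_{(C,A)}$ is the inclusion of abelian groups for all $A,C\in\CC$ by construction, so the defining condition for an $n$-exangulated subcategory is met.

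I do not expect a serious obstacle here: the content is essentially bookkeeping, tracing through the definitions and using that $\BE_{\DD}\sse\BE$ is a closed additive subfunctor with $\fs_{\DD}=\restr{\fs}{\BE_{\DD}}$. If anything is delicate, it is making sure the conventions for $n$-exangulated functors from \cite{Bennett-TennenhausShah-transport-of-structure-in-higher-homological-algebra} and $n$-exangulated subcategories from \cite{Haugland-the-grothendieck-group-of-an-n-exangulated-category} are quoted in exactly the form stated in Definitions~\ref{def:n-exangulated-functor} above, so that the verification above literally discharges those conditions; but as set up in the excerpt, nothing further is needed.
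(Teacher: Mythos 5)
Your argument is correct and is essentially the same as the paper's: both take the identity functor $\idfunc{\CC}$ together with the family of subgroup inclusions $\Gamma_{(C,A)}\colon\BE_{\DD}(C,A)\hookrightarrow\BE(C,A)$ and observe that, because $\BE_{\DD}$ is a subfunctor of $\BE$ and $\fs_{\DD}=\restr{\fs}{\BE_{\DD}}$, this pair is an $n$-exangulated functor. You simply spell out the naturality and realisation-compatibility checks that the paper records as ``clear''.
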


\begin{proof}

Since $\BE_{\DD}\sse\BE$ and $\fs_{\DD}$ is a restriction of $\fs$, it is clear that the identity functor $\idfunc{\CC}$, with the natural transformation $\Gamma\colon \BE_{\DD} \Rightarrow \BE(\idfunc{\CC}^{\op}-,\idfunc{\CC}-) = \BE$ given by $\Gamma_{(C,A)}(\delta) = \delta$ for each $A,C\in\CC$ and each $\delta\in\BE_{\DD}(C,A)$, is an $n$-exangulated functor.
\end{proof}

\begin{rem}
\label{rem:induced-subfunctor-when-D-additive-subcategory-of-n-angulated-category}

Suppose $(\CC,\sus^{n},\pentago)$ is an $(n+2)$-angulated category, and write $(\CC,\BE,\fs)$ for its $n$-exangulated structure; see Example~\ref{example:n+2-angulated-category-is-n-exangulated} above. 
Let $\DD$ be an additive subcategory of $\CC$. 
It is immediate that, for any $\delta\in\CC(X^{n+1},\sus^{n}X^{0})$, 
we have $\delta\in\BE_{\DD}(X^{n+1},X^{0})$ if and only if $F_{\DD}(\delta)=0$ (see \ref{1-1-vi} in Subsection~\ref{sec:conventions-notation}), i.e.\
\[
\BE_{\DD}(X^{n+1},X^{0}) 
		= \Set { \delta \in \CC(X^{n+1},\sus^{d}X^{0}) | F_{\DD}(\delta) = 0 }.
\]
Furthermore, since $\DD^{\perp_{0}}$ is closed under finite direct sums, we may consider the ideal $[\DD^{\perp_{0}}]$ of $\CC$. 
The subfunctor $\BE_{\DD}$ of $\BE$ satisfies
\(
[\DD^{\perp_{0}}](X^{n+1},\sus^{n}X^{0}) \subseteq \BE_{\DD}(X^{n+1},X^{0}).
\) 

We note that if $n=1$ (so that $\CC$ is a triangulated category), $\DD$ is extension-closed in $\CC$ and each object of $\CC$ admits a minimal right $\DD$-approximation, then one can show this inclusion is an equality using the triangulated Wakamatsu Lemma \cite[Lem.~1.1]{BuanMarsh-BM2}.
\end{rem}


\subsection{The Grothendieck group of an \texorpdfstring{$n$}{n}-exangulated category}
\label{sec:grothendieck-groups-of-n-exangulated-categories}

In classical settings, the notion of a Grothendieck group has been used previously to give connections between certain subcategories of a category and subgroups of the ambient category's Grothendieck group; see \cite{Matsui-classifying-dense-resolving-and-coresolving-subcategories-of-exact-categories-via-grothendieck-groups}, \cite{Thomason-the-classification-of-triangulated-subcategories}. More recently, some of these results have been extended to the $n$-exangulated cases; see \cite{BerghThaule-the-grothendieck-group-of-an-n-angulated-category}, \cite{Haugland-the-grothendieck-group-of-an-n-exangulated-category}, \cite{ZhuZhuang-grothendieck-groups-in-extriangulated-categories}. In contrast, we will look at how the Grothendieck group theory changes as we tweak the $n$-exangulated structure on a fixed category and we will also be considering certain quotients of split Grothendieck groups. 

The first construction we need is well-known; see, for example \cite[Ch.\ VII]{Bass-algebraic-k-theory-book}.

\begin{defn}

Let $\CA$ be a skeletally small additive category. 
An isomorphism class of an object $A\in\CA$ will be denoted as usual by $[A]$. 
Denote by $\CF(\CA)$ the free abelian group on the isomorphism classes of objects in $\CA$. 
The \emph{split Grothendieck group} $K_{0}^{\sp}(\CA)$ of $\CA$ is the quotient of $\CF(\CA)$ by the subgroup $\Braket{[A] - [A\oplus B] + [B] | A,B\in\CA}$. 

\end{defn}

Since we will be working modulo relations coming from $n$-exangles, it is convenient to introduce the following notation.

\begin{defn}
\label{def:euler-relation}

Let $\CA$ be a skeletally small additive category. 
For a complex 
\[
X^{\combul}\colon 
\begin{tikzcd}[column sep=0.5cm]
X^{0} \arrow{r}&X^{1} \arrow{r}& \cdots\arrow{r}& X^{n}\arrow{r} & X^{n+1}
\end{tikzcd}
\] 
in $\com_{\CA}^{n}$, we define the associated \emph{Euler relation} to be the alternating sum
\[
\chi(X^{\combul}) 
	\deff \sum_{i=0}^{n+1} (-1)^{i} [X^{i}] 
	= [X^{0}] - [X^{1}] + \cdots + (-1)^{n}[X^{n}] + (-1)^{n+1}[X^{n+1}],
\]
of isomorphism classes, viewed as an element of $\CF(\CA)$.
\end{defn}

\begin{rem}
\label{rem:reversing-order-of-euler-relation}

For the convenience of the reader, 
we make the following observation that we use several times in the remainder of the article: 
\begin{align*}
(-1)^{n+1}\chi(X^{\combul}) 	
	&= [X^{n+1}] - [X^{n}] + \cdots + (-1)^{n}[X^{1}] + (-1)^{n+1}[X^{0}] \\
	&= \sum_{i=0}^{n+1} (-1)^{i} [X^{n+1-i}].
\end{align*}
\end{rem}

We can now state the main definition of this subsection.

\begin{defn}
\label{def:grothendieck-group-of-n-exangulated-category}

\cite[Def.\ 4.1]{Haugland-the-grothendieck-group-of-an-n-exangulated-category}
Let $n\geq 1$ be a positive integer. 
Let $(\CC,\BE,\fs)$ be a skeletally small $n$-exangulated category. 
As before, let $\CF(\CC)$ be the free abelian group on the isomorphism classes of objects in $\CC$. 
Let $\CG(\CC,\BE,\fs)$ denote the subgroup of $\CF(\CC)$ generated by the subset
\[
\begin{cases}
\Set{ \chi(X^{\combul}) | X^{\combul} \text{ is an } \fs\text{-conflation in } (\CC,\BE,\fs) } & \text{if } n \text{ is odd}\\
\Set{ \chi(X^{\combul}) | X^{\combul} \text{ is an } \fs\text{-conflation in } (\CC,\BE,\fs) } \cup  \set{ [0] } & \text{if } n \text{ is even.}\\
\end{cases}
\]
The \emph{Grothendieck group} of $(\CC,\BE,\fs)$ is the quotient 
$K_{0}(\CC,\BE,\fs)\deff\CF(\CC)/\CG(\CC,\BE,\fs)$. 
\end{defn}

Let $(\CC,\BE,\fs)$ be an $n$-exangulated category and let $\DD\sse \CC$ be a full subcategory. We conclude this subsection by giving a description of the Grothendieck group of $(\CC,\BE_{\DD},\fs_{\DD})$ as a quotient of the split Grothendieck group $K_{0}^{\sp}(\CC)$.

\begin{defn}
\label{def:subgroup-I-sub-D}

Define the subgroup 
\[
\CI_{\DD}\deff \Braket{
	\chi(X^{\combul}) | X^{\combul} \text{ is an } \fs_{\DD}\text{-conflation in } (\CC,\BE_{\DD},\fs_{\DD})
}
\]
of the split Grothendieck group $K_{0}^{\sp}(\CC)$. Then we have $K_{0}(\CC,\BE_{\DD},\fs_{\DD}) \iso K_{0}^{\sp}(\CC)/\CI_{\DD}$, and we identify these groups. 
Furthermore, let us denote by $Q_{\DD}$ the canonical surjection 
$K_{0}^{\sp}(\CC)\to K_{0}^{\sp}(\CC) / \CI_{\DD}$. 

\end{defn}

There are the two extremal cases, which we quickly discuss now. 

\begin{enumerate}[label = (\alph*)]

	\item If $\DD = 0$, then $\BE_{\DD} = \BE_{0} = \BE$ and so $(\CC,\BE_{0},\fs_{0}) = (\CC,\BE,\fs)$. In addition, $\CI_{\DD} = \CI_{0}$ is generated by $\chi(X^{\combul})$ for each $\fs$-conflation $X^{\combul}$ in $(\CC,\BE,\fs)$. 

	\item If $\DD = \CC$, then $\BE_{\DD}(X^{n+1},X^{0}) = 0$ for any $X^{0},X^{n+1}\in\CC$, and so $\CI_{\DD} = \CI_{\CC}$ is generated by the Euler relations associated to the $\fs$-conflations of the form 
	\begin{center}
	\begin{tikzcd}[column sep=0.7cm]
	0 \arrow{r}& \cdots \arrow{r}& 0 \arrow{r}& A \arrow{r}{1_{A}}& A \arrow{r}& 0 \arrow{r}& \cdots\arrow{r} & 0,
	\end{tikzcd}
	\end{center}
	where $A\in\CC$ and the identity morphism $1_{A}$ may be in any position; see \cite[Prop.\ 2.14]{Haugland-the-grothendieck-group-of-an-n-exangulated-category}. Thus, $\CI_{\CC} = \lan [0]\ran$ is the trivial subgroup of $K_{0}^{\sp}(\CC)$, and hence $K_{0}^{\sp}(\CC)/\CI_{\CC} = K_{0}^{\sp}(\CC)$.

\end{enumerate}


\section{Cluster tilting subcategories and the \texorpdfstring{$(\lowercase{d}+2)$}{d+2}-angulated index}
\label{sec:cluster-tilting-subcategories}

For a triangulated category $\CC$ and cluster tilting subcategory $\CT\sse\CC$, a triangulated index was introduced by Palu in \cite{Palu-cluster-characters-for-2-calabi-yau-triangulated-categories}, giving a way to associate an element of the split Grothendieck group $K_{0}^{\sp}(\CT)$ to an object of $\CC$. A higher analogue was subsequently introduced in \cite{Jorgensen-tropical-friezes-and-the-index-in-higher-homological-algebra}; see Definition~\ref{def:d+2-angulated-index-wrt-T}. 
We show that this gives an isomorphism between the split Grothendieck group of $\CT$ and the Grothendieck group of the $d$-exangulated structure on $\CC$ induced by $\CT$.

First, we recall some essential definitions.

\begin{defn}
\label{def:n-cluster-tilting}

\cite{Iyama-maximal-orthogonal-subcategories-of-triangulated-categories-satisfying-serre-duality}, 
\cite[Sec.\ 3]{IyamaYoshino-mutation-in-tri-cats-rigid-CM-mods}
Let $n\geq 2$ be a positive integer. 
Let $\CC$ be an idempotent complete, triangulated category. 
Suppose $\CT$ is an additive subcategory of $\CC$. 
We say that $\CT$ is an \emph{$n$-cluster tilting subcategory of $\CC$} if: 

\begin{enumerate}[label = (\roman*)]

	\item $\CT$ is functorially finite; and

	\item 
	$\begin{aligned}[t]
		\CT 	&= \Set{X\in \CC | \Ext_{\CC}^{i}(\CT,X) = 0 \text{ for all } 1 \leq i \leq n-1} \\
		 	&= \Set{X\in \CC | \Ext_{\CC}^{i}(X,\CT) = 0 \text{ for all } 1 \leq i \leq n-1}.
	\end{aligned}$

\end{enumerate}
\end{defn}

An interesting characterisation of $n$-cluster tilting subcategories is given in 
\cite[Thm.\ 5.3]{Beligiannis-relative-homology-higher-cluster-tilting-theory-and-categorified-Auslander-Iyama-correspondence}.

\begin{defn}
\label{def:Oppermann-Thomas-cluster-tilting}

\cite[Def.\ 6.2]{Fedele-grothendieck-groups-of-triangulated-categories-via-cluster-tilting-subcategories}, 
\cite[Def.\ 5.3]{OppermannThomas-higher-dimensional-cluster-combinatorics-and-representation-theory} 
Let $d\geq 1$ be a positive integer. 
Let $\CS$ be a $(d+2)$-angulated category. 
Suppose $\CT$ is an additive subcategory of $\CS$. 
We say that $\CT$ is an \emph{Oppermann-Thomas cluster tilting subcategory of $\CS$} if: 

\begin{enumerate}[label = (\roman*)]
	
	\item $\CS(\CT,\sus^{d}\CT) = 0$; and 
	
	\item for each $S\in\CS$ there is a $(d+2)$-angle 
	\begin{tikzcd}[column sep=0.5cm]
	T^{0} \arrow{r} &\cdots \arrow{r} &T^{d} \arrow{r} &S \arrow{r} &\sus^{d}T^{0}
	\end{tikzcd}
	in $\CS$ with $T^{i}\in\CT$ for all $0 \leq i \leq d$.

\end{enumerate}

\end{defn}

Note that (i) and (ii) in Definition~\ref{def:Oppermann-Thomas-cluster-tilting} imply $\CT$ is functorially finite.

\begin{setup}
\label{setup:initial-d-setup-C-T-S}

For the remainder of Sections~\ref{sec:cluster-tilting-subcategories} and \ref{sec:K0TmodNX-as-a-grothendieck-group}, we fix the following. Let $\field$ be a field, and let $d,n$ be positive integers with $n=2d$. We assume $\CC$ is a skeletally small, $\field$-linear, Krull-Schmidt, $\Hom$-finite triangulated category with suspension functor $\sus$. We suppose there are subcategories $\CT\sse\CS$ in $\CC$, where $\CT$ is $n$-cluster tilting in $\CC$, and $\CS$ is $d$-cluster tilting in $\CC$ and $\sus^{d}\CS\sse\CS$.
\end{setup}

\begin{rem}
\label{rem:on-setup-3-3}

\begin{enumerate}[label=(\roman*)]

	\item As noted in \cite[Rem.\ 5.2]{Jorgensen-tropical-friezes-and-the-index-in-higher-homological-algebra}, Setup~\ref{setup:initial-d-setup-C-T-S} implies:
$\CS$ has the structure of a $(d + 2)$-angulated category following the ``standard construction'' given by Geiss--Keller--Oppermann (see \cite[Thm.\ 1]{GeissKellerOppermann-n-angulated-categories}), with $d$-suspension functor $\sus^{d}$, which is the restriction to $\CS$ of the $d$'th power of $\sus$; and that $\CT$ is an Oppermann-Thomas cluster tilting subcategory of $\CS$ by \cite[Thm.\ 5.25]{OppermannThomas-higher-dimensional-cluster-combinatorics-and-representation-theory}.

	\item We denote by $(\CS,\BE,\fs)$ the $d$-exangulated structure of $\CS$ (see Example~\ref{example:n+2-angulated-category-is-n-exangulated}). 

	\item 
	\label{rem-3-3-iii}
	The subcategory $\CT$ induces a $d$-exangulated structure on $\CS$ as in Proposition~\ref{prop:d-exangulated-structure-induced-by-subcategory}, which we denote by $(\CS,\BE_{\CT},\fs_{\CT})$. Furthermore, $(\CS,\BE_{\CT},\fs_{\CT})$ is a $d$-exangulated subcategory of $(\CS,\BE,\fs)$ by Theorem~\ref{thm:subcategory-induces-d-exangulated-subcategory-of-d-exangulated-category}.
	Setup~\ref{setup:initial-d-setup-C-T-S} implies $\CT$ itself is an additive subcategory of $\CS$, so, as in Remark~\ref{rem:induced-subfunctor-when-D-additive-subcategory-of-n-angulated-category}, we have

	\begin{center}
$\BE_{\CT}(X^{d+1},X^{0}) 
		= \Set { \delta \in \CS(X^{d+1},\sus^{d}X^{0}) | F_{\CT}(\delta) = 0 }$.
	\end{center}
\end{enumerate}

\end{rem}

With this Setup, we can then recall the higher index with respect to $\CT$.

\begin{defn}
\label{def:d+2-angulated-index-wrt-T}

\cite[Def.\ B]{Jorgensen-tropical-friezes-and-the-index-in-higher-homological-algebra} 
For $S\in\CS$, let 
\begin{equation}\label{eqn:dplus2-angle-for-index-wrt-T-of-object-S}
\begin{tikzcd}
T^{0}_{S} \arrow{r}{\tau^{0}}& \cdots\arrow{r}{\tau^{d-1}}& T^{d}_{S}\arrow{r} & S\arrow{r} & \sus^{d} T^{0}_{S}
\end{tikzcd} 
\end{equation}
be a $(d+2)$-angle in $\CS$ with $T^{i}_{S}\in\CT$ and $\tau^{i}$ in the radical of $\CS$ for each $i$. 
The \emph{$(d+2)$-angulated index of $S$ with respect to $\CT$} is 
\[
\dindx{\CT}(S) 
	= \sum_{i=0}^{d}(-1)^{i}[T^{d-i}_{S}]
	= [T^{d}_{S}] - [T^{d-1}_{S}] + \cdots + (-1)^{d-1}[T^{1}_{S}] + (-1)^{d}[T^{0}_{S}],
\]
viewed as an element of $K_{0}^{\sp}(\CT)$. 

\end{defn}

\begin{rem}
\label{rem:remarks-on-dplus2-angulated-index}

We note the following. 

\begin{enumerate}[label=(\roman*)]

	\item Let $T_{S}^{\combul}$ denote the $\fs$-conflation corresponding to the $(d+2)$-angle \eqref{eqn:dplus2-angle-for-index-wrt-T-of-object-S}, i.e.\ $T_{S}^{\combul}$ is the complex 
	$\begin{tikzcd}
T^{0}_{S} \arrow{r}{\tau^{0}}& \cdots\arrow{r}{\tau^{d-1}}& T^{d}_{S}\arrow{r} & S
\end{tikzcd} $
in $\com_{\CS}^{d}$. 
Then notice that, as elements of $K_{0}^{\sp}(\CS)$, we have the equality $\dindx{\CT}(S) = [S] - (-1)^{d+1}\chi(T_{S}^{\combul})$, 
using Definition~\ref{def:euler-relation} and Remark~\ref{rem:reversing-order-of-euler-relation}.

	\item For any $T\in\CT$, we have $\dindx{\CT}(T) = [T]$ as $\CS$ admits the $(d+2)$-angle 
	\begin{center}
	\begin{tikzcd}[column sep=0.6cm]
	0 \arrow{r} & \cdots \arrow{r} & 0 \arrow{r} &T \arrow{r}{1_{T}} &T \arrow{r} & \sus^{d} 0.
	\end{tikzcd}
	\end{center}
	
\end{enumerate}

\end{rem}

In order to show that the $(d+2)$-angulated index induces group homomorphisms between Grothendieck groups, we use the following result from \cite{Jorgensen-tropical-friezes-and-the-index-in-higher-homological-algebra}, which shows this index is additive on $(d+2)$-angles in $\CS$ up to an error term. 
Recall that $F_{\CT}\colon \CS \to \rmod{\CT}$ is given by $F_{\CT}(S) = \restr{\CS(-,S)}{\CT}$ on objects (see \ref{1-1-vi} in Subsection~\ref{sec:conventions-notation}).

\begin{thm}
\label{thm:Euler-sum-of-d-indices-equals-theta-of-image-of-connecting-map}

\cite[Thm.\ C]{Jorgensen-tropical-friezes-and-the-index-in-higher-homological-algebra} 
There is a homomorphism $\theta\colon K_{0}(\rmod{\CT}) \to K_{0}^{\sp}(\CT)$, such that if 
$\begin{tikzcd}[column sep=0.5cm]
S^{0} \arrow{r}& \cdots\arrow{r}& S^{d}\arrow{r} & S^{d+1}\arrow{r}{\gamma} & \sus^{d} S^{0}
\end{tikzcd}$
is a $(d+2)$-angle in $\CS$, then 
\[
\sum_{i=0}^{d+1} (-1)^{i} \dindx{\CT}(S^{d+1-i}) = \theta([\Im F_{\CT}(\gamma)]).
\]

\end{thm}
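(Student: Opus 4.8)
The plan is to construct the homomorphism $\theta$ explicitly, to repackage the left-hand side as (up to sign) the Euler characteristic of a bicomplex with entries in $\CT$ obtained by resolving the whole $(d+2)$-angle, and then to identify this with $\theta([\Im F_{\CT}(\gamma)])$ by applying the functor $F_{\CT}$ to that bicomplex.

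To construct $\theta$, I would proceed as follows. Since $\CT$ is functorially finite in the triangulated category $\CC$ it has weak kernels, so $\rmod{\CT}$ is abelian and its projective objects are exactly the representable functors $\CT(-,T)$, $T\in\CT$; and since $\CT$ is Krull--Schmidt, every $M\in\rmod{\CT}$ admits a minimal projective presentation $\CT(-,P_{1})\to\CT(-,P_{0})\to M\to 0$, unique up to isomorphism. Set $\theta_{0}([M])\deff[P_{0}]-[P_{1}]\in K_{0}^{\sp}(\CT)$. One checks this is independent of the presentation (any projective presentation of $M$ is the minimal one together with a direct sum of trivial complexes $\CT(-,P)\xrightarrow{\mathrm{id}}\CT(-,P)$, and these contribute $0$ to $K_{0}^{\sp}(\CT)$) and that it is additive on short exact sequences of $\rmod{\CT}$ by the horseshoe lemma and the same observation. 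Hence $\theta_{0}$ descends to a group homomorphism $\theta\colon K_{0}(\rmod{\CT})\to K_{0}^{\sp}(\CT)$, which, being defined on $K_{0}$, satisfies $\theta([M])=\sum_{i}(-1)^{i}[P_{i}]$ along any finite projective resolution of $M$.

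Now fix, for each term $S^{j}$ ($0\le j\le d+1$) of the $(d+2)$-angle, a $(d+2)$-angle $T^{0}_{j}\to\cdots\to T^{d}_{j}\to S^{j}\to\sus^{d}T^{0}_{j}$ with $T^{i}_{j}\in\CT$ as in Definition~\ref{def:d+2-angulated-index-wrt-T}. By definition of $\dindx{\CT}$, the left-hand side $\sum_{i=0}^{d+1}(-1)^{i}\dindx{\CT}(S^{d+1-i})$ is, up to sign, the double alternating sum of the classes $[T^{i}_{j}]$. Because $\CT$ is cluster tilting, the morphisms of the given $(d+2)$-angle lift to morphisms of these chosen $\CT$-resolutions, uniquely up to homotopy; using the higher octahedral axiom of Geiss--Keller--Oppermann I would assemble the lifts into a coherent bicomplex $\{T^{i}_{j}\}$ with entries in $\CT$, so that the above double sum is (up to sign) the Euler characteristic of this bicomplex. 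Applying $F_{\CT}=\CS(\CT,-)$ to $\{T^{i}_{j}\}$ yields a bicomplex of projective objects of $\rmod{\CT}$; using the rigidity $\CS(\CT,\sus^{d}\CT)=0$ of $\CT$, its $j$-th row is (most of) a projective resolution of $F_{\CT}(S^{j})$, while its columns realise the long exact sequence that the cohomological functor $F_{\CT}$ produces from the original $(d+2)$-angle. Tracking $K_{0}(\rmod{\CT})$-classes through the associated total complex, the syzygy terms of the rows and the boundary terms of the columns cancel, leaving exactly the class of $\Im F_{\CT}(\gamma)$, which is the image of the connecting morphism $\gamma$; applying $\theta$ and comparing with the double sum gives the stated equality.

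The main obstacle is the construction and analysis of this bicomplex: arranging the $\CT$-resolutions of the $d+2$ terms into one coherent double array via the $(d+2)$-angulated octahedral axiom, and then verifying that after applying $F_{\CT}$ all of the syzygy and boundary contributions cancel in $K_{0}(\rmod{\CT})$ so that precisely $[\Im F_{\CT}(\gamma)]$ survives. This is where the numerical coincidence $n=2d$, which makes the $2d$-cluster tilting of $\CT$ in $\CC$ compatible with the $d$-cluster tilting of $\CS$, is used; everything else is routine bookkeeping in $K_{0}^{\sp}(\CT)$ and $K_{0}(\rmod{\CT})$.
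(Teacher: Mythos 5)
This theorem is not proved in the paper at all: it is quoted verbatim as \cite[Thm.\ C]{Jorgensen-tropical-friezes-and-the-index-in-higher-homological-algebra}, so there is no internal proof to compare against. What I can assess is whether your proposed construction is sound, and it has a genuine gap at the very first step.

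Your $\theta_{0}([M])\deff[P_{0}]-[P_{1}]$, taken from a \emph{minimal} projective presentation $\CT(-,P_{1})\to\CT(-,P_{0})\to M\to 0$, is \emph{not} additive on short exact sequences of $\rmod{\CT}$, so it does not descend to $K_{0}(\rmod{\CT})$. The horseshoe lemma produces a projective presentation $P_{1}'\oplus P_{1}''\to P_{0}'\oplus P_{0}''\to M\to 0$ of the middle term, but this need not be minimal, and your claim that ``any projective presentation of $M$ is the minimal one together with a direct sum of trivial complexes $\CT(-,P)\xrightarrow{\mathrm{id}}\CT(-,P)$'' is false: one can also add a summand $\CT(-,P)\to 0$ in degree one, which changes $[P_{0}]-[P_{1}]$ by $-[P]$. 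Concretely, for a short exact sequence $0\to M'\to M''\oplus\text{(proj.)}\to M''\to 0$ where $M'$ is a first syzygy, the horseshoe presentation of the middle term is far from minimal, and minimalising it removes exactly such a degree-one summand. For modules of projective dimension $\ge 2$ the resulting failure of additivity is easy to produce (e.g.\ a length-$2$ linear quiver algebra with the composite relation killed). Moreover, your $\theta_{0}$ does not agree with the map $\theta$ the paper actually uses: on a simple $\ol{S}_{T}$ your recipe gives $[T]-[P_{1}]$ (with $P_{1}$ essentially the middle term $X$ of the exchange $(d{+}2)$-angle ending in $T$), whereas Proposition~\ref{prop:theta-of-simple-in-modT-is-in-N-R} of the paper shows $\theta([\ol{S}_{T}])=\sum_{i=1}^{d}(-1)^{i}[X^{d+1-i}]-\sum_{i=1}^{d}(-1)^{i}[Y^{i}]$, which for $d=1$ is $[Y]-[X]$, not $[T]-[X]$. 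So even granting well-definedness, you would be proving a statement about a different $\theta$.

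The second half of the proposal --- assembling the chosen $\CT$-resolutions of the $d+2$ terms into a ``coherent bicomplex'' via the higher octahedral axiom, applying $F_{\CT}$, and claiming that ``the syzygy terms of the rows and the boundary terms of the columns cancel, leaving exactly $[\Im F_{\CT}(\gamma)]$'' --- is asserted rather than carried out, and this is where essentially all of the work of the cited theorem lives. The $(d{+}2)$-angulated octahedral axiom is a statement about a single composable pair of morphisms; extending this to a full double array compatible with $F_{\CT}$, and then bookkeeping the $K_{0}$-contributions of all row-syzygies and column-boundaries, is not routine and would need to be written out in detail before the argument could be believed. As it stands, the proposal replaces the hard content of the theorem by an unsubstantiated cancellation claim, on top of a definition of $\theta$ that is not the right one.
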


The first homomorphism obtained via the higher index is from the split Grothendieck group of $\CS$ to that of $\CT$.

\begin{prop}
\label{prop:d-index-gives-hom-split-G-group-of-S-to-split-G-group-of-T}

The $(d+2)$-angulated index $\dindx{\CT}$ induces a group homomorphism $K_{0}^{\sp}(\CS) \to K_{0}^{\sp}(\CT)$, which we also denote by $\dindx{\CT}$. 

\end{prop}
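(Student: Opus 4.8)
The plan is to produce the homomorphism in two stages: first I would obtain a group homomorphism out of the free abelian group $\CF(\CS)$ on the isomorphism classes of objects of $\CS$, and then show it kills the relations defining $K_{0}^{\sp}(\CS)$. For the first stage, recall from Definition~\ref{def:d+2-angulated-index-wrt-T} and \cite{Jorgensen-tropical-friezes-and-the-index-in-higher-homological-algebra} that $\dindx{\CT}(S)$ is well defined: a $(d+2)$-angle of the form \eqref{eqn:dplus2-angle-for-index-wrt-T-of-object-S} exists because $\CT$ is Oppermann-Thomas cluster tilting (Definition~\ref{def:Oppermann-Thomas-cluster-tilting}) and $\CS$ is Krull-Schmidt, and the resulting class in $K_{0}^{\sp}(\CT)$ is independent of the choice of such an angle and depends only on the isomorphism class of $S$ (an isomorphism $S\to S'$ carries a $(d+2)$-angle for $S$ of the required form to one for $S'$ of the same shape). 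Hence, by the universal property of $\CF(\CS)$, the assignment $[S]\mapsto\dindx{\CT}(S)$ extends uniquely to a group homomorphism $\CF(\CS)\to K_{0}^{\sp}(\CT)$.

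For the second stage, since $K_{0}^{\sp}(\CS)$ is the quotient of $\CF(\CS)$ by the subgroup generated by the elements $[A]-[A\oplus B]+[B]$, it suffices to check that $\dindx{\CT}$ is additive on direct sums. Given $A,B\in\CS$, I would fix $(d+2)$-angles $T_{A}^{0}\to\cdots\to T_{A}^{d}\to A\to\sus^{d}T_{A}^{0}$ and $T_{B}^{0}\to\cdots\to T_{B}^{d}\to B\to\sus^{d}T_{B}^{0}$ of the form used to compute the index and take their termwise direct sum. This is again a $(d+2)$-angle with last object $A\oplus B$ (direct sums of $(d+2)$-angles are $(d+2)$-angles; cf.\ \cite{GeissKellerOppermann-n-angulated-categories}), its terms $T_{A}^{i}\oplus T_{B}^{i}$ lie in $\CT$ because $\CT$ is additive, and its first $d$ differentials are radical because the radical of the Krull-Schmidt category $\CS$ is an ideal closed under finite direct sums. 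So it may be used to compute $\dindx{\CT}(A\oplus B)$, and since $[-]$ is additive on direct sums in $K_{0}^{\sp}(\CT)$ we obtain
\[
\dindx{\CT}(A\oplus B)=\sum_{i=0}^{d}(-1)^{i}\bigl[T_{A}^{d-i}\oplus T_{B}^{d-i}\bigr]=\sum_{i=0}^{d}(-1)^{i}\bigl([T_{A}^{d-i}]+[T_{B}^{d-i}]\bigr)=\dindx{\CT}(A)+\dindx{\CT}(B).
\]
Hence $\CF(\CS)\to K_{0}^{\sp}(\CT)$ factors through $K_{0}^{\sp}(\CS)$, yielding the asserted map.

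I do not expect a genuine obstacle: the argument is essentially the universal property of free abelian groups together with the additivity of $[-]$ over direct sums, its only non-formal inputs being the well-definedness of the index (already in \cite{Jorgensen-tropical-friezes-and-the-index-in-higher-homological-algebra}) and the two closure properties used above. Should one prefer not to invoke closure of $(d+2)$-angles under direct sums, one can instead feed the direct-sum angle into Theorem~\ref{thm:Euler-sum-of-d-indices-equals-theta-of-image-of-connecting-map}, using that $F_{\CT}$ preserves direct sums, that $\theta$ is a homomorphism, and Remark~\ref{rem:remarks-on-dplus2-angulated-index}(ii); but this is a longer route to the same conclusion.
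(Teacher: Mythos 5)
Your proof is correct but takes a genuinely different route from the paper. You verify additivity over direct sums directly at the level of the defining $(d+2)$-angles: the termwise direct sum of the angles computing $\dindx{\CT}(A)$ and $\dindx{\CT}(B)$ is again an angle of the required form (terms in $\CT$ since $\CT$ is additive; differentials radical since the radical is an ideal closed under direct sums; $(d+2)$-angles are closed under direct sums by GKO's axiom (F1)(a)), so it may be used to compute $\dindx{\CT}(A\oplus B)$, and the additivity of $[-]$ in $K_0^{\sp}(\CT)$ does the rest. The paper instead invokes Theorem~\ref{thm:Euler-sum-of-d-indices-equals-theta-of-image-of-connecting-map}: it applies the additivity-with-error-term formula to the split $(d+2)$-angle $0\to\cdots\to 0\to A\to A\oplus B\to B\to\sus^{d}0$, whose connecting morphism is $0$, so the error term $\theta([\Im 0])$ vanishes. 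Your argument is more elementary (it uses only the well-definedness of the index from \cite[Rem.\ 5.4]{Jorgensen-tropical-friezes-and-the-index-in-higher-homological-algebra} plus closure of angles under direct sums), whereas the paper's route recycles the machinery already in play and foreshadows the repeated use of the error-term theorem in the rest of Section~\ref{sec:cluster-tilting-subcategories}. One small quibble: your closing sentence about feeding ``the direct-sum angle'' into Theorem~\ref{thm:Euler-sum-of-d-indices-equals-theta-of-image-of-connecting-map} to avoid invoking closure under direct sums is self-undermining as phrased — the alternative that avoids that input is to feed the \emph{split} angle $0\to\cdots\to 0\to A\to A\oplus B\to B\to\sus^{d}0$ into the theorem, which is precisely what the paper does.
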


\begin{proof}

By \cite[Rem.\ 5.4]{Jorgensen-tropical-friezes-and-the-index-in-higher-homological-algebra}, $\dindx{\CT}(S)$ is well-defined and depends only on the isomorphism class of $S$ in $\CS$. 
Therefore, by extending linearly, it induces a group homomorphism $\CF(\CS) \to K_{0}^{\sp}(\CT)$. 
Let $A,B\in\CS$ be arbitrary. 
Then $\CS$ admits the $(d+2)$-angle 
\[
\begin{tikzcd}[ampersand replacement=\&, column sep = 1.2cm]
0 \arrow{r} \& \cdots \arrow{r} \&0 \arrow{r} \&A \arrow{r}{
	\begin{psmallmatrix} 1_{A} \\ 0
	\end{psmallmatrix} }
\&A\oplus B \arrow{r}{
	\begin{psmallmatrix} 0 & 1_{B}
	\end{psmallmatrix} }
\& B \arrow{r}{0} \& \sus^{d}0,
\end{tikzcd}
\]
which yields 
$\dindx{\CT}(B) - \dindx{\CT}(A\oplus B) + \dindx{\CT}(A) = \theta([\Im 0]) = 0$ by Theorem~\ref{thm:Euler-sum-of-d-indices-equals-theta-of-image-of-connecting-map}. 
Hence, we obtain a well-defined group homomorphism $\dindx{\CT}\colon K_{0}^{\sp}(\CS) \to K_{0}^{\sp}(\CT)$.
\end{proof}

\begin{notation}

Suppose $\DD$ is a full subcategory of the $(d+2)$-angulated category $\CS$. 
Recall that there is, thus, a $d$-exangulated subcategory $(\CS,\BE_{\DD},\fs_{\DD})$ of $(\CS,\BE,\fs)$ by Theorem~\ref{thm:subcategory-induces-d-exangulated-subcategory-of-d-exangulated-category}. 
Let $\delta\in\BE_{\DD}(X^{d+1},X^{0})\sse\CS(X^{d+1},\sus^{d}X^{0})$. 
If $\fs_{\DD}(\delta) = [X^{0}\to\cdots\to X^{d+1}]$ such that  
$X^{0}\to\cdots\to X^{d+1}$ completes to a $(d+2)$-angle $X^{0}\to\cdots\to X^{d+1} \to \sus^{d}X^{0}$ in $\CS$, then we emphasise this by depicting the $\fs_{\DD}$-distinguished $d$-exangle $\lan X^{\combul},\delta\ran$ as 
\[
\begin{tikzcd}
X^{0} \arrow{r}&X^{1} \arrow{r}& \cdots\arrow{r}& X^{d}\arrow{r} & X^{d+1} \arrow[dashed]{r}{\delta}& \sus^{d} X^{0},
\end{tikzcd}
\] 
to simultaneously indicate that $\lan X^{\combul},\delta\ran$ is an $\fs_{\DD}$-distinguished $d$-exangle and that the whole diagram is a $(d+2)$-angle in $\CS$. 
Although this notation is used for exangulated categories more generally, we will reserve this notation specifically for the type of situation just described. 
\end{notation}

The authors thank P.-G. Plamondon for bringing \cite[Prop.\ 4.11]{PadrolPaluPilaudPlamondon-associahedra-for-finite-type-cluster-algebras-and-minimal-relations-between-g-vectors} to the attention of the first author. Our main result of this section is the following higher version of \cite[Prop.\ 4.11]{PadrolPaluPilaudPlamondon-associahedra-for-finite-type-cluster-algebras-and-minimal-relations-between-g-vectors}, which was the starting point for the present article.

\begin{thm}
\label{thm:d-index-gives-isomorphism-G-group-of-T-induced-exangulated-structure-on-S-to-split-G-group-of-T}

The $(d+2)$-angulated index $\dindx{\CT}$ induces an isomorphism $K_{0}(\CS,\BE_{\CT},\fs_{\CT}) \to K_{0}^{\sp}(\CT)$, which we also denote by $\dindx{\CT}$.  The inverse $L\colon K_{0}^{\sp}(\CT) \to K_{0}(\CS,\BE_{\CT},\fs_{\CT})$ is defined on generators by $L([T]) = [T]$ for each $T\in\CT$.

\end{thm}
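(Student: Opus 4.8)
The strategy is to construct the homomorphism $\dindx{\CT}\colon K_{0}(\CS,\BE_{\CT},\fs_{\CT}) \to K_{0}^{\sp}(\CT)$ and its candidate inverse $L$ separately, then check the two composites are identities on generators. First I would observe that by Proposition~\ref{prop:d-index-gives-hom-split-G-group-of-S-to-split-G-group-of-T} we already have a homomorphism $\dindx{\CT}\colon K_{0}^{\sp}(\CS) \to K_{0}^{\sp}(\CT)$, and by Definition~\ref{def:subgroup-I-sub-D} the group $K_{0}(\CS,\BE_{\CT},\fs_{\CT})$ is the quotient $K_{0}^{\sp}(\CS)/\CI_{\CT}$. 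So to get the induced map it suffices to show $\dindx{\CT}$ kills $\CI_{\CT}$, i.e.\ that $\dindx{\CT}(\chi(X^{\combul})) = 0$ for every $\fs_{\CT}$-conflation $X^{\combul}$. Such a conflation fits into a $(d+2)$-angle $X^{0}\to\cdots\to X^{d+1}\xrightarrow{\delta}\sus^{d}X^{0}$ with $\delta\in\BE_{\CT}(X^{d+1},X^{0})$, which by Remark~\ref{rem:on-setup-3-3}\ref{rem-3-3-iii} means exactly $F_{\CT}(\delta) = 0$, hence $\Im F_{\CT}(\delta) = 0$. Theorem~\ref{thm:Euler-sum-of-d-indices-equals-theta-of-image-of-connecting-map} then gives $\sum_{i=0}^{d+1}(-1)^{i}\dindx{\CT}(X^{d+1-i}) = \theta([0]) = 0$; after reindexing (Remark~\ref{rem:reversing-order-of-euler-relation}) this is $\pm\dindx{\CT}(\chi(X^{\combul})) = 0$, which is what we need.

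**Constructing $L$.** Next I would define $L$ on the free group $\CF(\CT)$ by $[T]\mapsto [T]$ (the class of $T$ in $K_{0}(\CS,\BE_{\CT},\fs_{\CT})$), and check it descends to $K_{0}^{\sp}(\CT)$. For this one must verify that the split relation $[T\oplus T'] - [T] - [T']$ is sent to $0$; this holds because the split $(d+2)$-angle $0\to\cdots\to T \to T\oplus T' \to T' \to \sus^{d}0$ has connecting morphism $0\in\BE_{\CT}(T',T)$ (the zero extension lies in every subfunctor), so it is an $\fs_{\CT}$-conflation and its Euler relation lies in $\CI_{\CT}$. Hence $L\colon K_{0}^{\sp}(\CT)\to K_{0}(\CS,\BE_{\CT},\fs_{\CT})$ is a well-defined homomorphism.

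**The two composites.** It then remains to check $L\circ\dindx{\CT}$ and $\dindx{\CT}\circ L$ are identities, and it is enough to do this on generators. For $\dindx{\CT}\circ L$: on a generator $[T]$ with $T\in\CT$ we have $L([T]) = [T]$ and $\dindx{\CT}([T]) = [T]$ by Remark~\ref{rem:remarks-on-dplus2-angulated-index}(ii) (the $(d+2)$-angle $0\to\cdots\to T\xrightarrow{1_T}T\to\sus^{d}0$ computes the index of $T$ as $[T]$); so this composite is the identity on each generator, hence on all of $K_{0}^{\sp}(\CT)$. For $L\circ\dindx{\CT}$: on a generator $[S]$ of $K_{0}(\CS,\BE_{\CT},\fs_{\CT})$ with $S\in\CS$, pick the defining $(d+2)$-angle \eqref{eqn:dplus2-angle-for-index-wrt-T-of-object-S} for $\dindx{\CT}(S)$. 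I claim its connecting morphism lies in $\BE_{\CT}$: indeed $T^d_S\to S$ is a weak cokernel of $T^{d-1}_S\to T^d_S$ in the sense that applying $\CS(\CT,-)$ to the $(d+2)$-angle gives an exact sequence, which forces $F_{\CT}$ of the connecting morphism to vanish — equivalently, the $(d+2)$-angle is an $\fs_{\CT}$-distinguished $d$-exangle because $\CT$ is Oppermann-Thomas cluster tilting. Therefore its Euler relation is in $\CI_{\CT}$, which says $[S] = (-1)^{d+1}\chi(T_S^{\combul}) + [S] - [S]$... more precisely, in $K_{0}(\CS,\BE_{\CT},\fs_{\CT})$ the relation $\chi(T_S^{\combul}\text{ augmented by }\sus^d T^0_S)$... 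Let me phrase it via Remark~\ref{rem:remarks-on-dplus2-angulated-index}(i): the relation $\dindx{\CT}(S) = [S] - (-1)^{d+1}\chi(T_S^{\combul})$ holds in $K_{0}^{\sp}(\CS)$, and since $\chi$ of the full $\fs_{\CT}$-conflation lies in $\CI_{\CT}$, passing to the quotient gives $L(\dindx{\CT}(S)) = [S]$ in $K_{0}(\CS,\BE_{\CT},\fs_{\CT})$.

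**Main obstacle.** The crux — and the step I would spend the most care on — is verifying that the defining $(d+2)$-angle \eqref{eqn:dplus2-angle-for-index-wrt-T-of-object-S} of $S$ really does have its connecting morphism in $\BE_{\CT}(S, T^0_S)$, i.e.\ $F_{\CT}$ applied to $S\to\sus^d T^0_S$ is zero. This is the one place where the Oppermann-Thomas cluster tilting hypothesis on $\CT$ (particularly condition (ii), that every object has such a $(d+2)$-angle, together with $\CS(\CT,\sus^d\CT)=0$) must be used to produce an $\fs_{\CT}$-conflation rather than merely an $\fs$-conflation. Concretely, one applies $\CS(T,-)$ for $T\in\CT$ to the rotated $(d+2)$-angle and uses the long exact sequence of the $(d+2)$-angulated structure together with $\CS(T,\sus^{i}\CT) = 0$ for appropriate $i$ in the relevant range to conclude that the map $\CS(T,S)\to\CS(T,\sus^d T^0_S)$ is zero; the interplay of the index range $1\le i\le d-1$ (or $n=2d$) with the vanishing needed is the delicate bookkeeping here. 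Once that is secured, everything else is the routine diagram-chasing sketched above.
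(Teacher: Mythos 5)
Your proposal follows the paper's proof step for step: show $\CI_{\CT} \sse \ker\dindx{\CT}$ via Theorem~\ref{thm:Euler-sum-of-d-indices-equals-theta-of-image-of-connecting-map} and the vanishing $F_{\CT}(\delta)=0$ for $\fs_{\CT}$-conflations; define $L$ on generators; verify $\dindx{\CT}\circ L = \mathrm{id}$ via Remark~\ref{rem:remarks-on-dplus2-angulated-index}(ii); and verify $L\circ\dindx{\CT} = \mathrm{id}$ using the defining $(d+2)$-angle of $S$ together with Remark~\ref{rem:remarks-on-dplus2-angulated-index}(i). Your additional explicit check that $L$ kills the split relations is fine (the paper leaves it implicit, since $L$ is visibly the composite of the inclusion $K_{0}^{\sp}(\CT)\to K_{0}^{\sp}(\CS)$ with $Q_{\CT}$).

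One remark on what you call the ``main obstacle'': you greatly overestimate the difficulty of showing that the defining $(d+2)$-angle of $S$ is an $\fs_{\CT}$-conflation. There is no long exact sequence, no rotation, and no bookkeeping over an index range. Since $T^{0}_{S}\in\CT$, the Oppermann-Thomas condition $\CS(\CT,\sus^{d}\CT)=0$ says the \emph{entire group} $\CS(T,\sus^{d}T^{0}_{S})$ vanishes for every $T\in\CT$, so $F_{\CT}(-,\sus^{d}T^{0}_{S})=0$ as a functor and hence $F_{\CT}(\delta)=0$ trivially for the connecting morphism $\delta\colon S\to\sus^{d}T^{0}_{S}$. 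Your proposed route (applying $\CS(T,-)$ to the rotated angle and chasing exactness against various $\CS(T,\sus^{i}\CT)=0$) would also terminate, but only because its target group is already zero --- i.e.\ the argument you'd be running collapses to the one-line observation above. The paper uses exactly that one-liner.
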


\begin{proof} 

Recall that $K_{0}(\CS,\BE_{\CT},\fs_{\CT}) = K_{0}^{\sp}(\CS)/\CI_{\CT}$; 
see Definition~\ref{def:subgroup-I-sub-D}. 
We first show $\CI_{\CT}$ is contained in the kernel of $\dindx{\CT}\colon K_{0}^{\sp}(\CS) \to K_{0}^{\sp}(\CT)$ from Proposition~\ref{prop:d-index-gives-hom-split-G-group-of-S-to-split-G-group-of-T}.  
Thus, let 
\[
\begin{tikzcd}
S^{0} \arrow{r}& \cdots\arrow{r}& S^{d}\arrow{r} & S^{d+1}\arrow[dashed]{r}{\gamma} & \sus^{d} S^{0}
\end{tikzcd}
\]
be an $\fs_{\CT}$-distinguished $d$-exangle in $(\CS,\BE_{\CT},\fs_{\CT})$. 
Thus, $F_{\CT}(\gamma) = 0$ by Remark~\ref{rem:on-setup-3-3}\ref{rem-3-3-iii}, 
and we have 
\begin{align*}
\sum_{i=0}^{d+1} (-1)^{i} \dindx{\CT}(S^{i}) &= (-1)^{d+1}\sum_{i=0}^{d+1} (-1)^{i} \dindx{\CT}(S^{d+1-i}) 
&&\text{(see Remark~\ref{rem:reversing-order-of-euler-relation})}\\
&= (-1)^{d+1}\theta([\Im F_{\CT}(\gamma)]) 
&&\text{by Theorem~\ref{thm:Euler-sum-of-d-indices-equals-theta-of-image-of-connecting-map}}\\
&= 0.
\end{align*}
Hence, $\dindx{\CT}$ induces a group homomorphism
$K_{0}(\CS,\BE_{\CT},\fs_{\CT}) \to K_{0}^{\sp}(\CT)$, which we again denote by $\dindx{\CT}$. 

It remains to show that this is an isomorphism, and we do this by constructing an inverse of $\dindx{\CT}$. 
Define a group homomorphism $L\colon K_{0}^{\sp}(\CT) \to K_{0}(\CS,\BE_{\CT},\fs_{\CT})$ on generators by $L([T]) = [T]$ for each $T\in\CT$ and extend linearly. 
By Remark~\ref{rem:remarks-on-dplus2-angulated-index}(ii), we see that $\dindx{\CT}\circ L$ is the identity on $ K_{0}^{\sp}(\CT)$. 
On the other hand, for $S\in\CS$, there is a $(d+2)$-angle of the form \eqref{eqn:dplus2-angle-for-index-wrt-T-of-object-S} as $\CT$ is Oppermann-Thomas cluster tilting. 
By dropping trivial summands from the morphisms $\tau^{i}$, one can assume each $\tau^{i}$ is radical. 
As $\CT$ is Oppermann-Thomas cluster tilting, we have $\CS(\CT,\sus^{d}\CT) = 0$. 
Thus, in $ K_{0}(\CS,\BE_{\CT},\fs_{\CT})$, we see 
\begin{align*}
[S] 	&=  - \sum_{i=1}^{d+1} (-1)^{i}[T^{d+1-i}] \\
	&= \sum_{i=1}^{d+1} (-1)^{i-1}[T^{d+1-i}] \\
	&= L\left(\sum_{i=0}^{d} (-1)^{i}[T^{d-i}]\right) \\
	&= L(\dindx{\CT}([S])).
\end{align*}
This shows that $L \circ \dindx{\CT}$ is the identity on $K_{0}(\CS,\BE_{\CT},\fs_{\CT})$, and hence $\dindx{\CT}\colon  K_{0}(\CS,\BE_{\CT},\fs_{\CT}) \to K_{0}^{\sp}(\CT)$ is indeed an isomorphism. 
\end{proof}


\section{The quotient \texorpdfstring{$K_{0}^{\sp}(\CT)/N_{\CX}$}{K0TmodNX} as a Grothendieck group}
\label{sec:K0TmodNX-as-a-grothendieck-group}

Throughout this section, we still have the base assumptions laid out in Setup~\ref{setup:initial-d-setup-C-T-S}. 
In the coming subsections, we specialise a further three times as detailed below in order to prove our main result, namely, Theorem~\ref{thm:G-R-is-an-isomorphism}. 
Relevant definitions are recalled in the corresponding subsection.

\begin{setup}[for Subsection~\ref{sec:mutation-pairs}]
\label{setup:all-indecomposable-objects-of-T-not-in-X-are-part-of-exchange-pair}

In addition to Setup~\ref{setup:initial-d-setup-C-T-S}, suppose: 
$\CS$ is \emph{$2d$-Calabi-Yau} (see Definition~\ref{def:2d-calabi-yau}); 
$\CX$ is a functorially finite, additive subcategory of $\CT$; 
and all objects in $\ind\CT\setminus\ind\CX$ have a \emph{mutation} (see Definition~\ref{def:mutation-pair}).

\end{setup}

\begin{setup}[for Subsection~\ref{sec:formula-for-theta-on-simples}]
\label{setup:assume-Im-Fgamma-is-simple} 

In addition to Setups~\ref{setup:initial-d-setup-C-T-S} and \ref{setup:all-indecomposable-objects-of-T-not-in-X-are-part-of-exchange-pair}, we assume that, for each $T\in\ind\CT\setminus\ind\CX$, the image $\Im F_{\CT}(\gamma^{T})$ (see \ref{1-1-vi} in Subsection~\ref{sec:conventions-notation}) is isomorphic to the simple module 
$\ol{S}_{T} = \CT(-,T)/ \rad_{\CT}(-,T)$ in $\rMod{\CT}$ (see \eqref{eqn:simple-at-T}), 
where $\gamma^{T}$ is as in the exchange $(d+2)$-angle \eqref{eqn:exchange-d+2-angle-gamma-T}.

\end{setup}

\begin{setup}[for Subsection~\ref{sec:G-X-is-an-isomorphism}]
\label{setup:F-lands-in-finite-length-modules}

In addition to Setups~\ref{setup:initial-d-setup-C-T-S}, \ref{setup:all-indecomposable-objects-of-T-not-in-X-are-part-of-exchange-pair} and \ref{setup:assume-Im-Fgamma-is-simple}, we assume that for any $S\in\CS$, the functor $F_{\CT}(S) = \restr{\CS(-, S)}{\CT}$ (see \ref{1-1-vi} in Subsection~\ref{sec:conventions-notation}) has finite length in $\rmod{\CT}$, i.e.\ $F_{\CT}(S) \in \fl{\CT}$. 

\end{setup}

\begin{rem}\label{rem:setups-in-sec-4}
We note these Setups are satisfied in several cases of interest. 
\begin{enumerate}[label=(\roman*)]

\item\label{item:setup-4-1} For example, Setup~\ref{setup:all-indecomposable-objects-of-T-not-in-X-are-part-of-exchange-pair} is satisfied if 
$\CT = \add T$ for some Oppermann-Thomas cluster tilting object $T\in\CS$, and $\CX\sse\CT$ is chosen as needed for Setup~\ref{setup:all-indecomposable-objects-of-T-not-in-X-are-part-of-exchange-pair}. 
In particular, when $d>1$, we note that not \emph{any} choice of a functorially finite, additive subcategory $\CX\sse\CT$ will result in each indecomposable in $\CT\setminus\CX$ having a mutation, so one has to be careful when choosing $\CX$; see \cite[Rem.\ 5.8]{OppermannThomas-higher-dimensional-cluster-combinatorics-and-representation-theory}.

\item Setup~\ref{setup:assume-Im-Fgamma-is-simple} holds if e.g.\ $d=1$ and $\CT$ is part of a \emph{cluster structure} in the sense of 
\cite[p.\ 1039]{BuanIyamaReitenScott-cluster-structures-for-2-calabi-yau-categories-and-unipotent-groups}.

\item The additional condition in Setup~\ref{setup:F-lands-in-finite-length-modules}---namely, that $F_{\CT}(S)$ has finite length---is satisfied if, for example, $\CT$ is \emph{locally bounded}, i.e.\ $\Hom$-finite and for each indecomposable object 
		$T\in\CT$, there are only finitely many objects $T'\in\ind\CT$ such that $\CT(T,T')\neq 0$ or $\CT(T',T)\neq 0$. 
		See \cite[Rem.\ 5.4]{JorgensenPalu-a-caldero-chapoton-map-for-infinite-clusters}. 
		(Note that we are using `locally bounded' in the sense of \cite{LenzingReiten-hereditary-noetherian-categories-of-positive-euler-characteristic}, 
		and not as in \cite{DowborSkowronski-on-the-representation-type-of-locally-bounded-categories}.)
\end{enumerate}
\end{rem}

In this section we will establish the following. 

\begin{thm}
\label{thm:diagram-of-homomorphisms}

There is a commutative diagram 
\begin{equation}
\label{eqn:diagram-of-homs}
\begin{tikzcd}[column sep=1.5cm, row sep=2cm]
K_{0}^{\sp}(\CS)
\arrow[bend right=60, two heads]{dd}[swap]{\dindx{\CT}}
\arrow[two heads]{d}{Q_{\CT}}
\arrow[two heads]{dr}{Q_{\CX}}
\arrow[two heads]{r}{Q_{0}} 	
	& 	K_{0}(\CS,\BE,\fs)\\
K_{0}(\CS,\BE_{\CT},\fs_{\CT})
\arrow[bend left=20]{d}{\dindx{\CT}} 
\arrow[two heads]{r}{\wt{Q}}
\commutes[\iso]{d}	
	&	K_{0}(\CS,\BE_{\CX},\fs_{\CX})
		\arrow[two heads]{u}{}\\
K_{0}^{\sp}(\CT)
\arrow[two heads]{r}{Q_{N_{\CX}}}
\arrow[bend left=20]{u}{L} 
	&	K_{0}^{\sp}(\CT)/N_{\CX}
		\arrow[dotted]{u}{G_{\CX}}[swap]{\iso}
\end{tikzcd}
\end{equation}
of abelian groups, with the indicated surjections and isomorphisms. 
In particular, there is an isomorphism 
$K_{0}^{\sp}(\CT)/N_{\CX} \iso K_{0}(\CS,\BE_{\CX},\fs_{\CX})$.
\end{thm}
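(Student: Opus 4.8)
The plan is to assemble the diagram \eqref{eqn:diagram-of-homs} from three inputs: the inclusions of subfunctors coming from $\CX\sse\CT$, the isomorphism $\dindx{\CT}$ of Theorem~\ref{thm:d-index-gives-isomorphism-G-group-of-T-induced-exangulated-structure-on-S-to-split-G-group-of-T}, and the additivity-with-error-term formula of Theorem~\ref{thm:Euler-sum-of-d-indices-equals-theta-of-image-of-connecting-map}. The only substantial new work is the identification of a certain kernel with $N_{\CX}$, carried out using Setups~\ref{setup:all-indecomposable-objects-of-T-not-in-X-are-part-of-exchange-pair}--\ref{setup:F-lands-in-finite-length-modules} in the subsections that follow and culminating in Theorem~\ref{thm:G-R-is-an-isomorphism}.

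First I would record that, since $\CX\sse\CT$, the condition defining $\BE_{\CT}$ is stronger than the one defining $\BE_{\CX}$, so $\BE_{\CT}\sse\BE_{\CX}\sse\BE$ as subfunctors. Hence every $\fs_{\CT}$-conflation is an $\fs_{\CX}$-conflation and every $\fs_{\CX}$-conflation is an $\fs$-conflation, giving inclusions $\CI_{\CT}\sse\CI_{\CX}\sse\CI_{0}$ of subgroups of $K_{0}^{\sp}(\CS)$. The induced quotient maps between $K_{0}^{\sp}(\CS)/\CI_{\CT}=K_{0}(\CS,\BE_{\CT},\fs_{\CT})$, $K_{0}^{\sp}(\CS)/\CI_{\CX}=K_{0}(\CS,\BE_{\CX},\fs_{\CX})$ and $K_{0}^{\sp}(\CS)/\CI_{0}=K_{0}(\CS,\BE,\fs)$ provide the surjections $Q_{0}$, $Q_{\CT}$, $Q_{\CX}$, $\wt{Q}$ and the unlabelled arrow on the right, and commutativity of every cell in the top half of \eqref{eqn:diagram-of-homs} is immediate ($Q_{\CX}=\wt{Q}Q_{\CT}$, and $Q_{0}$ factors through $Q_{\CX}$). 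The left-hand column and the isomorphism marked $\iso$ are exactly Theorem~\ref{thm:d-index-gives-isomorphism-G-group-of-T-induced-exangulated-structure-on-S-to-split-G-group-of-T} together with Proposition~\ref{prop:d-index-gives-hom-split-G-group-of-S-to-split-G-group-of-T}: $\dindx{\CT}\colon K_{0}(\CS,\BE_{\CT},\fs_{\CT})\to K_{0}^{\sp}(\CT)$ is an isomorphism with inverse $L$, and the bent arrow $\dindx{\CT}\colon K_{0}^{\sp}(\CS)\to K_{0}^{\sp}(\CT)$ is $\dindx{\CT}Q_{\CT}$.

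The crux is the dotted isomorphism $G_{\CX}$. I would obtain it by showing that the isomorphism $\dindx{\CT}\colon K_{0}(\CS,\BE_{\CT},\fs_{\CT})\to K_{0}^{\sp}(\CT)$ carries the kernel $\Ker\wt{Q}=\CI_{\CX}/\CI_{\CT}$ onto exactly $N_{\CX}$; then $\dindx{\CT}$ descends to an isomorphism $K_{0}(\CS,\BE_{\CX},\fs_{\CX})\xrightarrow{\sim}K_{0}^{\sp}(\CT)/N_{\CX}$, and $G_{\CX}$ is its inverse, which makes the lower square commute by construction and yields the final assertion. For the inclusion $\dindx{\CT}(\CI_{\CX}/\CI_{\CT})\sse N_{\CX}$: if $S^{0}\to\cdots\to S^{d+1}\xrightarrow{\gamma}\sus^{d}S^{0}$ is an $\fs_{\CX}$-distinguished $d$-exangle, then $F_{\CX}(\gamma)=0$, so by Setup~\ref{setup:F-lands-in-finite-length-modules} the module $\Im F_{\CT}(\gamma)$ has finite length and vanishes on $\CX$, hence all its composition factors are simples $\ol{S}_{T}$ with $T\in\ind\CT\setminus\ind\CX$; combining Theorem~\ref{thm:Euler-sum-of-d-indices-equals-theta-of-image-of-connecting-map} with the additivity of $\theta$ along a composition series and the explicit value of $\theta$ on each $\ol{S}_{T}$ (computed in Subsection~\ref{sec:formula-for-theta-on-simples} from the exchange $(d+2)$-angle \eqref{eqn:exchange-d+2-angle-gamma-T}, and equal to a defining element of $N_{\CX}$) shows $\dindx{\CT}(\chi(S^{\combul}))=\pm\theta([\Im F_{\CT}(\gamma)])\in N_{\CX}$. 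For the reverse inclusion: each defining generator of $N_{\CX}$ comes from the exchange $(d+2)$-angle attached to some $T\in\ind\CT\setminus\ind\CX$, whose connecting morphism $\gamma^{T}$ satisfies $\Im F_{\CT}(\gamma^{T})\iso\ol{S}_{T}$ by Setup~\ref{setup:assume-Im-Fgamma-is-simple}; since $T\notin\ind\CX$ the simple $\ol{S}_{T}=\CT(-,T)/\rad_{\CT}(-,T)$ restricts to zero on $\CX$, so $F_{\CX}(\gamma^{T})=0$, that $(d+2)$-angle is an $\fs_{\CX}$-conflation, and Theorem~\ref{thm:Euler-sum-of-d-indices-equals-theta-of-image-of-connecting-map} realises the generator as $\pm\dindx{\CT}$ of its Euler relation, which lies in $\CI_{\CX}$. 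The remaining cells of \eqref{eqn:diagram-of-homs} then commute by a diagram chase on generators, since all the maps involved are determined by their values on the classes $[S]$ and $[T]$.

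The main obstacle is precisely the equality $\dindx{\CT}(\CI_{\CX}/\CI_{\CT})=N_{\CX}$, and within it the forward inclusion, which is where Setups~\ref{setup:all-indecomposable-objects-of-T-not-in-X-are-part-of-exchange-pair}--\ref{setup:F-lands-in-finite-length-modules} (existence of mutations, simplicity of $\Im F_{\CT}(\gamma^{T})$, and finite length of $F_{\CT}(S)$) are genuinely needed: one must reduce an arbitrary $\Im F_{\CT}(\gamma)$ to its composition factors and know the value of $\theta$ on those simples. The sign bookkeeping between $\chi(S^{\combul})$ and $\sum_{i}(-1)^{i}\dindx{\CT}(S^{i})$, governed by the factor $(-1)^{d+1}$ of Remark~\ref{rem:reversing-order-of-euler-relation}, must be carried through carefully but is otherwise routine.
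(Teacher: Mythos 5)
Your proposal is correct and follows essentially the same route as the paper: the upper half of the diagram is the universal property of quotients applied to $\CI_{\CT}\sse\CI_{\CX}\sse\CI_{0}$, the left column is Theorem~\ref{thm:d-index-gives-isomorphism-G-group-of-T-induced-exangulated-structure-on-S-to-split-G-group-of-T}, and your identification $\dindx{\CT}(\CI_{\CX}/\CI_{\CT})=N_{\CX}$ repackages exactly what the paper proves in Corollary~\ref{cor:existence-of-G-R} (your reverse inclusion, via Lemma~\ref{lem:gamma-T-gamma-T-star-vanish-under-F-wrt-T-and-X} and Proposition~\ref{prop:generators-of-N-R-are-trivial-in-G-group-of-R-induced-exangulated-structure-on-S}) and Theorem~\ref{thm:G-R-is-an-isomorphism} (your forward inclusion, via Theorem~\ref{thm:Euler-sum-of-d-indices-equals-theta-of-image-of-connecting-map}, a composition series, and Proposition~\ref{prop:theta-of-simple-in-modT-is-in-N-R}). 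The only cosmetic deviation is that you derive $F_{\CX}(\gamma^{T})=0$ by noting $\ol{S}_{T}$ restricts to zero on $\CX$, whereas the paper obtains it directly from the cluster tilting property, and you phrase the conclusion as descending $\dindx{\CT}$ rather than constructing $G_{\CX}$ and verifying bijectivity, but the mathematical content is identical.
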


Some of the categories and homomorphisms in \eqref{eqn:diagram-of-homs} have already appeared in earlier sections, and others we introduce in this section. 
That $L$ and $\dindx{\CT}$ are mutually inverse follows from 
Theorem~\ref{thm:d-index-gives-isomorphism-G-group-of-T-induced-exangulated-structure-on-S-to-split-G-group-of-T}.

The upper half of \eqref{eqn:diagram-of-homs} exists and commutes by the following formal considerations. 
Recall that in Definition~\ref{def:subgroup-I-sub-D}, for each full subcategory $\DD$ of an $n$-exangulated category $(\CC,\BE,\fs)$, we defined a subgroup 
$\CI_{\DD}\leq K_{0}^{\sp}(\CC)$
and the canonical epimorphism
\[
Q_{\DD}\colon K_{0}^{\sp}(\CC)\to K_{0}^{\sp}(\CC) / \CI_{\DD} = K_{0}(\CC,\BE_{\DD},\fs_{\DD}). 
\]
Since $0\sse\CX\sse\CT\sse\CS$, we have that $0=\CI_{\CS}\leq\CI_{\CT}\leq\CI_{\CX}\leq\CI_{0}$ as subgroups of $ K_{0}^{\sp}(\CS)$. 
Thus, applications of the universal property of the quotient yield a commutative diagram
\begin{equation}
\label{eqn:diagram-of-Qs}
\begin{tikzcd}[column sep=1.2cm, row sep=1.5cm]
K_{0}^{\sp}(\CS) \arrow[two heads]{r}{Q_{0}}\arrow[two heads]{d}{Q_{\CT}}\arrow[two heads]{dr}{Q_{\CX}}& K_{0}(\CS,\BE,\fs) \\
K_{0}(\CS,\BE_{\CT},\fs_{\CT}) \arrow[two heads]{r}{\wt{Q}}& K_{0}(\CS,\BE_{\CX},\fs_{\CX})\arrow[two heads]{u}{}
\end{tikzcd}
\end{equation}
of surjective abelian group homomorphisms. See Subsection~\ref{sec:grothendieck-groups-of-n-exangulated-categories} for more details. 

Accordingly, the primary objective of this section is to establish the existence of the lower half of \eqref{eqn:diagram-of-homs} and to show $G_{\CX}$ is an isomorphism. We do this in Corollary~\ref{cor:existence-of-G-R}
and 
Theorem~\ref{thm:G-R-is-an-isomorphism}, respectively.


\subsection{Mutation pairs and the subgroup \texorpdfstring{$N_{\CX}$}{NsubX}}
\label{sec:mutation-pairs}

We are in the situation of Setup~\ref{setup:all-indecomposable-objects-of-T-not-in-X-are-part-of-exchange-pair}. In this subsection we will define the subgroup $N_{\CX}$ and show the existence of the homomorphism $G_{\CX}$ appearing in \eqref{eqn:diagram-of-homs}.

\begin{defn}
\label{def:2d-calabi-yau}

Recall that $\field$ is an algebraically closed field. Let $\kdual(-) = \Hom_{\field}(-,\field)$ denote the standard $\field$-duality. 
Then $\CS$ is \emph{$2d$-Calabi-Yau} if, for all $S,S'\in\CS$, there are isomorphisms $\CS(S,S') \iso \kdual\CS(S',(\sus^{d})^{2}S)$, which are natural in each argument. 

\end{defn}

\begin{rem}
\label{rem:flX-is-abelian-and-inside-modX}

Note that under Setup~\ref{setup:all-indecomposable-objects-of-T-not-in-X-are-part-of-exchange-pair}, 
we have that $\CT$ is $n$-cluster tilting in $\CC$ and hence rigid (see Definition~\ref{def:n-cluster-tilting}), so the subcategory $\CX\sse\CT$ is also rigid. 
In addition, our situation meets all the conditions of \cite[Setup 1.1]{HolmJorgensen-generalized-friezes-and-a-modified-caldero-chapoton-map-depending-on-a-rigid-object-1}, and hence $\rMod{\CX}$ and $\rmod{\CX}$ share the same simple objects and finite length objects. In particular, $\fl{\CX}$ is abelian and the inclusions $\fl{\CX}\to \rmod{\CX}$ and $\fl{\CX}\to \rMod{\CX}$ are exact. 
Similar statements hold for $\CT$. 
See \cite[1.8]{HolmJorgensen-generalized-friezes-and-a-modified-caldero-chapoton-map-depending-on-a-rigid-object-1}.

\end{rem}

\begin{defn}
\label{def:exchange-pair}

\cite[(0.3)]{Jorgensen-tropical-friezes-and-the-index-in-higher-homological-algebra}
A pair $(S,S^{*})$ of objects in $\CS$ is said to be an \emph{exchange pair} if 
$\dim_{\field}\CS(S,\sus^{d}S^{*}) = 1$. 
\end{defn}

The following terminology we introduce is motivated by \cite[Subsec.\ 1.3]{OppermannThomas-higher-dimensional-cluster-combinatorics-and-representation-theory}.

\begin{defn}
\label{def:mutation-pair}

Let $T$ be an object of $\ind\CT$, and suppose $T^{*}$ lies in $\ind \CS$ with $T^{*} \niso T$. 
We call $(T,T^{*})$ a \emph{mutation pair} if 
it is an exchange pair (see Definition~\ref{def:exchange-pair}) 
and 
$\add((\ind\CT\setminus\{T\}) \cup \{T^{*}\})$ is again an Oppermann-Thomas cluster tilting subcategory of $\CS$ (see Definition~\ref{def:Oppermann-Thomas-cluster-tilting}). 
That is, replacing $T$ by $T^{*}$ yields another Oppermann-Thomas cluster tilting subcategory of $\CS$. 
In this case, we say 
$T^{*}$ is a \emph{mutation} of $T$. 

\end{defn}

For a mutation pair $(T,T^{*})$, there is a pair of distinguished $(d+2)$-angles that will play a key role in this whole section. A description of these can be given just as in \cite{OppermannThomas-higher-dimensional-cluster-combinatorics-and-representation-theory}.

\begin{prop}
\label{prop:mutation-is-unique-up-to-isomorphism}

Suppose $T\in\ind\CT$ has a mutation $T^{*}\in\ind\CS$. 
Then, up to (non-unique) isomorphism, 
there are \emph{exchange $(d+2)$-angles} 
\begin{align}
	\begin{tikzcd}[ampersand replacement=\&]
	Y^{0} \arrow{r}\& Y^{1} \arrow{r}\&\cdots\arrow{r}\& Y^{d}\arrow{r} \& Y^{d+1}\arrow{r}{\gamma^{T^{*}}} \& \sus^{d} Y^{0},
	\end{tikzcd} 			\label{eqn:exchange-d+2-angle-gamma-T-star}\\
	\begin{tikzcd}[ampersand replacement=\&]
	X^{0} \arrow{r} \& X^{1} \arrow{r}\&\cdots\arrow{r}\& X^{d}\arrow{r} \& X^{d+1}\arrow{r}{\gamma^{T}} \& \sus^{d} X^{0}
	\end{tikzcd}			\label{eqn:exchange-d+2-angle-gamma-T}
\end{align}
in $\CS$ that satisfy the following. 

\begin{enumerate}[\emph{(\roman*)}]

	\item $Y^{0} = X^{d+1} = T$ and $Y^{d+1} = X^{0} = T^{*}$.
	
	\item $\gamma^{T^{*}},\gamma^{T}$ are both non-zero. 
	
	\item $Y^{1}\to Y^{2}, Y^{2}\to Y^{3}, \ldots, Y^{d}\to Y^{d+1}$ 
	and $X^{0}\to X^{1}, X^{1}\to X^{2}, \ldots, X^{d-1}\to X^{d}$ are radical morphisms. 
	
	\item For all $1\leq i\leq d$, we have $X^{i},Y^{i}\in\add(\ind\CT\setminus\{T\})\sse\CT$.
	
	\item $Y^{0}\to Y^{1}$ is a minimal left $\add(\ind\CT\setminus\{T\})$-approximation.

	\item $X^{d}\to X^{d+1}$ is a minimal right $\add(\ind\CT\setminus\{T\})$-approximation.

\end{enumerate}
Moreover, $T^{*}$ is unique up to isomorphism. 

\end{prop}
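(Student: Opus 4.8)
The plan is to realise the two exchange $(d+2)$-angles as minimal resolutions of $T$ and $T^{*}$ over suitable Oppermann--Thomas cluster tilting subcategories, and then to read off the fine structure at the two ends using the exchange-pair hypothesis together with the $2d$-Calabi--Yau duality. Write $\CU\deff\add(\ind\CT\setminus\{T\})$ and $\CT'\deff\add((\ind\CT\setminus\{T\})\cup\{T^{*}\})$, so $\CU=\CT\cap\CT'$ and, by Definition~\ref{def:mutation-pair}, both $\CT=\add(\CU\cup\{T\})$ and $\CT'=\add(\CU\cup\{T^{*}\})$ are Oppermann--Thomas cluster tilting subcategories of $\CS$. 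The whole argument follows the strategy of Oppermann--Thomas \cite{OppermannThomas-higher-dimensional-cluster-combinatorics-and-representation-theory}, adapted from cluster tilting \emph{objects} to cluster tilting \emph{subcategories}.

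First I would build the $X$-angle. By Definition~\ref{def:Oppermann-Thomas-cluster-tilting}(ii) applied to $\CT'$ and the object $T\in\CS$, there is a $(d+2)$-angle $X^{0}\to\cdots\to X^{d}\to T\to\sus^{d}X^{0}$ with all $X^{i}\in\CT'$; since $\CS$ is Krull--Schmidt (Setup~\ref{setup:initial-d-setup-C-T-S}) one may discard contractible summands of the underlying complex so that $X^{0}\to X^{1},\ldots,X^{d-1}\to X^{d}$ are radical and $X^{d}\to T$ is a minimal right $\CT'$-approximation; as in \cite[Rem.\ 5.4]{Jorgensen-tropical-friezes-and-the-index-in-higher-homological-algebra} the resulting objects are then unique up to isomorphism. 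It remains to check that $\gamma^{T}\colon T\to\sus^{d}X^{0}$ is non-zero, that $X^{i}\in\CU$ for $1\le i\le d$, and that $X^{0}\iso T^{*}$. The first point is easy: if $\gamma^{T}=0$ the angle splits, whence $X^{0}\in\add T$; but $X^{0}\in\CT'$ and $T\notin\CT'$ (two comparable Oppermann--Thomas cluster tilting subcategories coincide), forcing $X^{0}=0$ and hence $T\in\CT'$, a contradiction. For the remaining two points one uses the rigidity $\CS(\CT',\sus^{d}\CT')=0$ (Remark~\ref{rem:flX-is-abelian-and-inside-modX}), which forces the connecting map and the intermediate maps of a \emph{minimal} $(d+2)$-angle built from $\CT'$-terms to have no component through a $T^{*}$-summand of a $\CU$-object, so that no $X^{i}$ with $1\le i\le d$ has a $T^{*}$-summand; and the dimension count $\dim_{\field}\CS(T,\sus^{d}T^{*})=1$ from the exchange-pair condition (Definition~\ref{def:exchange-pair}), which pins the multiplicity of $T^{*}$ in $X^{0}$ to exactly one. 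This case analysis, which is precisely the content of the mutation theory of \cite{OppermannThomas-higher-dimensional-cluster-combinatorics-and-representation-theory}, is the main obstacle; everything else is formal. Finally, since now $X^{d}\in\CU$, the minimal right $\CT'$-approximation $X^{d}\to T$ is automatically a minimal right $\CU$-approximation, giving (vi).

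Next I would construct the $Y$-angle symmetrically. The $2d$-Calabi--Yau property (Definition~\ref{def:2d-calabi-yau}) gives $\CS(T^{*},\sus^{d}T)\iso\kdual\CS(\sus^{d}T,(\sus^{d})^{2}T^{*})\iso\kdual\CS(T,\sus^{d}T^{*})$, so $\dim_{\field}\CS(T^{*},\sus^{d}T)=1$ as well. Running the construction of the previous paragraph with the roles of $T$ and $T^{*}$ interchanged, using that $\CT=\add(\CU\cup\{T\})$ is Oppermann--Thomas cluster tilting, produces a minimal $(d+2)$-angle $Y^{0}\to\cdots\to Y^{d}\to T^{*}\to\sus^{d}Y^{0}$ with $Y^{0}\iso T$, $Y^{i}\in\CU$ for $1\le i\le d$, radical maps $Y^{1}\to Y^{2},\ldots,Y^{d}\to Y^{d+1}$, and $\gamma^{T^{*}}\ne 0$. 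Rotating this angle and applying $\CS(-,U)$ for $U\in\CU$, and using $\CS(T^{*},\sus^{d}\CU)=0$ (as $\CU\sse\CT'$), shows every map $T=Y^{0}\to U$ factors through $Y^{0}\to Y^{1}$; minimality of the resolution makes $Y^{0}\to Y^{1}$ a \emph{minimal} left $\CU$-approximation, giving (v). Setting $Y^{d+1}=X^{0}=T^{*}$ and $Y^{0}=X^{d+1}=T$ now yields (i), and (ii)--(iv) have been obtained along the way.

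Lastly, for the uniqueness of $T^{*}$, suppose $T^{\dagger}\in\ind\CS$ with $T^{\dagger}\niso T$ is also a mutation of $T$. Applying the construction of the second paragraph with $\CT^{\dagger}\deff\add(\CU\cup\{T^{\dagger}\})$ gives a minimal $(d+2)$-angle $Z^{0}\to\cdots\to Z^{d}\xrightarrow{z}T\to\sus^{d}Z^{0}$ with $Z^{i}\in\CU$ for $1\le i\le d$, $Z^{0}\iso T^{\dagger}$, and $z$ a minimal right $\CU$-approximation of $T$. Minimal right $\CU$-approximations are unique up to isomorphism, and iterating the cocone construction in the $(d+2)$-angulated category $\CS$ a total of $d$ times starting from $z$, while requiring each intermediate structure map to be a minimal right $\CU$-approximation and each intermediate term to lie in $\CU$, determines $Z^{d-1},\ldots,Z^{1},Z^{0}$ up to isomorphism (this is the standard uniqueness-of-objects argument for $(d+2)$-angles). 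Hence $T^{*}\iso X^{0}\iso Z^{0}\iso T^{\dagger}$, as required.
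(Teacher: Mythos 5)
Your proof follows essentially the same route as the paper's: use the $2d$-Calabi--Yau duality to obtain the symmetric dimension count $\dim_\field\CS(T^*,\sus^d T)=1$, and then carry out the argument of $(1)\Rightarrow(2)\Rightarrow(3)$ from \cite[Thm.\ 5.7]{OppermannThomas-higher-dimensional-cluster-combinatorics-and-representation-theory} at the level of cluster tilting subcategories rather than objects. The paper states this in two lines and delegates the adaptation to the reader; you spell it out, which is a useful complement.

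One intermediate step in your verification that $\gamma^T\neq 0$ is not quite right. You claim that if $\gamma^T=0$ then ``the angle splits, whence $X^0\in\add T$''; this implication is not justified, and it is also not the conclusion you need. A cleaner route to the contradiction you want: if $\gamma^T=0$, apply $\CS(T,-)$ to the $(d+2)$-angle. Exactness and $(\gamma^T)_*=0$ give a surjection $\CS(T,X^d)\onto\CS(T,T)$, so $\iden{T}$ factors through $X^d\to T$ and $T$ is a direct summand of $X^d\in\CT'$, contradicting $T\notin\CT'$. (Alternatively, after discarding contractible summands, a $(d+2)$-angle with zero connecting morphism is zero, forcing $T=0$.) With this repair the non-degeneracy step is fine and the remainder of your proposal agrees with the approach the paper has in mind.
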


\begin{proof}

As $\dim_{\field}\CS(T,\sus^{d}T^{*}) = 1$, we also have $\dim_{\field}\CS(T^{*},\sus^{d}T) = 1$ since $\CS$ is $2d$-Calabi-Yau. This gives the existence of 
\eqref{eqn:exchange-d+2-angle-gamma-T-star} and 
\eqref{eqn:exchange-d+2-angle-gamma-T} satisfying (i) and (ii). 
One can then follow the proof of $(1) \Rightarrow (2)\Rightarrow(3)$ of \cite[Thm.\ 5.7]{OppermannThomas-higher-dimensional-cluster-combinatorics-and-representation-theory} at our level of generality.
\end{proof}

\begin{rem}\label{rem:mutation}

\begin{enumerate}[label=(\roman*)]

	\item Suppose $T\in\ind\CT$ has a mutation $T^{*}$. 
Then $T^{*}$ is unique up to isomorphism by Proposition~\ref{prop:mutation-is-unique-up-to-isomorphism}. 
Thus, up to isomorphism, we speak of \emph{the} mutation $T^{*}$ of $T$. 
It also follows from the same result that $T^{*}\notin\CT$. 

	\item\label{item:not-mutable-object} As noted in Remark~\ref{rem:setups-in-sec-4}\ref{item:setup-4-1}, it is not guaranteed that every indecomposable object in $\CT$ has a mutation; see \cite[Rem.\ 5.8]{OppermannThomas-higher-dimensional-cluster-combinatorics-and-representation-theory}. 
	
\end{enumerate}
\end{rem}

The next definition is motivated by \cite[Def.\ 2.4]{HolmJorgensen-generalized-friezes-and-a-modified-caldero-chapoton-map-depending-on-a-rigid-object-2}, which deals with the triangulated version.

\begin{defn}
\label{def:N-X}

We define the subgroup 
\[ 
N_{\CX} = \Braket{
	\sum_{i=1}^{d} (-1)^{i} [X^{d+1-i}] - \sum_{i=1}^{d} (-1)^{i} [Y^{i}]  |   
	\begin{array}{l}
	\text{\eqref{eqn:exchange-d+2-angle-gamma-T-star} and \eqref{eqn:exchange-d+2-angle-gamma-T} are exchange } (d+2)\text{-angles} \\ 
	\text{with } T\in\ind\CT\setminus\ind\CX 
	\end{array}
	}
\] 
of $K_{0}^{\sp}(\CT)$, and let 
\[
Q_{N_{\CX}}\colon K_{0}^{\sp}(\CT)\to K_{0}^{\sp}(\CT)/N_{\CX}
\]
denote the canonical surjection.

\end{defn}

In Theorem~\ref{thm:d-index-gives-isomorphism-G-group-of-T-induced-exangulated-structure-on-S-to-split-G-group-of-T} we defined $L\colon K_{0}^{\sp}(\CT) \to K_{0}(\CS,\BE_{\CT},\fs_{\CT})$, which was shown to be the inverse of the $(d+2)$-angulated index 
$\dindx{\CT} \colon K_{0}(\CS,\BE_{\CT},\fs_{\CT}) \to K_{0}^{\sp}(\CT)$. 
We will show the subgroup $N_{\CX}$ vanishes under the composition $\wt{Q}L\colon K_{0}^{\sp}(\CT)\to K_{0}(\CS,\BE_{\CX},\fs_{\CX})$, in order to obtain a homomorphism 
$G_{\CX}\colon K_{0}^{\sp}(\CT)/N_{\CX} \to K_{0}(\CS,\BE_{\CX},\fs_{\CX})$. 
For this, we need the following key lemma.

\begin{lem}
\label{lem:gamma-T-gamma-T-star-vanish-under-F-wrt-T-and-X}

Suppose that $(T,T^{*})$ is a mutation pair with $T\in\ind\CT\setminus\ind\CX$. 
Let 
$ \gamma^{T^{*}}\colon T^{*} \to \sus^{d} T$ 
and 
$ \gamma^{T}\colon T \to \sus^{d} T^{*}$ 
be the morphisms in the exchange $(d+2)$-angles \eqref{eqn:exchange-d+2-angle-gamma-T-star} and \eqref{eqn:exchange-d+2-angle-gamma-T}. 
Then 
$F_{\CT}(\gamma^{T^{*}}) = 0 $ and $F_{\CX}(\gamma^{T^{*}}) = 0 = F_{\CX}(\gamma^{T})$. 

\end{lem}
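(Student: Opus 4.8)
The plan is to unwind what $F_{\CT}$ and $F_{\CX}$ do to a morphism and reduce each of the three claims to the vanishing of a composition in $\CS$. Recall from~\ref{1-1-vi} in Subsection~\ref{sec:conventions-notation} that for a subcategory $\DD$ of $\CS$ and a morphism $g\colon S\to S'$ in $\CS$, the natural transformation $F_{\DD}(g)$ has component at $D\in\DD$ given by post-composition with $g$; hence $F_{\DD}(g) = 0$ if and only if $g\circ u = 0$ for every $u\colon D\to S$ with $D\in\DD$.

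First I would dispose of $\gamma^{T^{*}}$, handling $F_{\CT}$ and $F_{\CX}$ simultaneously. The morphism $\gamma^{T^{*}}\colon T^{*}\to\sus^{d}T$ has target $\sus^{d}T$ with $T\in\CT$, so for every $U\in\CT$ we have $\CS(U,\sus^{d}T) = 0$ because $\CT$ is Oppermann-Thomas cluster tilting, i.e.\ $\CS(\CT,\sus^{d}\CT) = 0$ (Definition~\ref{def:Oppermann-Thomas-cluster-tilting}(i)). Hence $F_{\CT}(\sus^{d}T)$ is the zero functor in $\rMod{\CT}$, and a fortiori so is $F_{\CX}(\sus^{d}T)$ since $\CX\sse\CT$. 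Therefore both $F_{\CT}(\gamma^{T^{*}})$ and $F_{\CX}(\gamma^{T^{*}})$ are morphisms into a zero object, so they vanish; note that this step uses only $T\in\CT$, not the mutation structure.

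The remaining claim, $F_{\CX}(\gamma^{T}) = 0$, is where the hypothesis $T\notin\ind\CX$ enters. Here $\gamma^{T}\colon T\to\sus^{d}T^{*}$ sits in the exchange $(d+2)$-angle~\eqref{eqn:exchange-d+2-angle-gamma-T}, so $X^{d+1} = T$; write $\alpha\colon X^{d}\to X^{d+1}$ for the preceding morphism. I would check that $\gamma^{T}\circ u = 0$ for every $u\colon U\to T$ with $U\in\CX$; by additivity it suffices to treat $U\in\ind\CX$. By the conventions of Subsection~\ref{sec:conventions-notation} we have $\ind\CX\sse\ind\CT$, so $U\in\ind\CT$, and $U\niso T$ because $T\in\ind\CT\setminus\ind\CX$; hence $U\in\add(\ind\CT\setminus\{T\})$. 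By Proposition~\ref{prop:mutation-is-unique-up-to-isomorphism}(vi), $\alpha$ is a (minimal) right $\add(\ind\CT\setminus\{T\})$-approximation, so $u$ factors as $u = \alpha\circ u'$ for some $u'\colon U\to X^{d}$. Since consecutive morphisms in a $(d+2)$-angle compose to zero, $\gamma^{T}\circ\alpha = 0$, whence $\gamma^{T}\circ u = \gamma^{T}\circ\alpha\circ u' = 0$. This gives $F_{\CX}(\gamma^{T}) = 0$.

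I do not anticipate a genuine obstacle; once the definitions are unwound the argument is short. The only point that warrants a little care is the correct invocation of Proposition~\ref{prop:mutation-is-unique-up-to-isomorphism}(iv) and (vi)---that $X^{d}\in\add(\ind\CT\setminus\{T\})$ and that $\alpha$ is a right $\add(\ind\CT\setminus\{T\})$-approximation---combined with the observation that every indecomposable summand of an object of $\CX$ lies in $\add(\ind\CT\setminus\{T\})$, which is precisely where $T\notin\ind\CX$ (rather than merely $T\in\ind\CT$) is used.
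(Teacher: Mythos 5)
Your treatment of $\gamma^{T^*}$ coincides with the paper's: both observe that $\CS(\CT,\sus^d T)=0$ by the Oppermann--Thomas cluster tilting property, so $F_{\CT}(\sus^d T)$ and $F_{\CX}(\sus^d T)$ are both the zero functor.

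For the remaining claim $F_{\CX}(\gamma^T)=0$, you take a genuinely different route from the paper. The paper argues globally: since $(T,T^*)$ is a mutation pair, the subcategory $\CT'=\add\bigl((\ind\CT\setminus\{T\})\cup\{T^*\}\bigr)$ is again Oppermann--Thomas cluster tilting, and $\CX\sse\CT'$ because $T\notin\ind\CX$; hence $\CS(\CX,\sus^d T^*)\sse\CS(\CT',\sus^d\CT')=0$, so the target functor $F_{\CX}(\sus^d T^*)$ already vanishes. You argue locally instead, using Proposition~\ref{prop:mutation-is-unique-up-to-isomorphism}(iv)--(vi): since $X^d\to T$ is a right $\add(\ind\CT\setminus\{T\})$-approximation and $\CX\sse\add(\ind\CT\setminus\{T\})$, every $u\colon U\to T$ with $U\in\CX$ factors through $X^d\to T$, and then $\gamma^T\circ u = 0$ because consecutive morphisms in a $(d+2)$-angle compose to zero. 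Both arguments are correct, and both ultimately lean on the same hypothesis (that $T^*$ is a mutation of $T$), but access it differently: the paper uses the structural consequence that $\CT'$ is cluster tilting, directly built into Definition~\ref{def:mutation-pair}, giving a one-line vanishing statement; your version is slightly more hands-on, only invoking the approximation property of the exchange $(d+2)$-angle and basic $(d+2)$-angulated axioms. The paper's phrasing is cleaner here (no need to invoke the "composition of consecutive morphisms is zero" lemma), but yours has the pedagogical merit of pinpointing exactly where $T\notin\ind\CX$ is used and not requiring the full strength of the cluster-tilting property of the mutated subcategory.
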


\begin{proof}

As $\CT$ is Oppermann-Thomas cluster tilting and $\CX\sse\add(\ind\CT\setminus\{T\})\sse\CT$, we have that $\CS(\CT, \sus^{d}T) = 0 = \CS(\CX, \sus^{d}T)$. 
Thus, $F_{\CT}(\gamma^{T^{*}})$ and $F_{\CX}(\gamma^{T^{*}})$ both vanish. 
On the other hand, since replacing $T$ by $T^{*}$ in $\CT$ yields another Oppermann-Thomas cluster tilting subcategory of $\CS$ that also contains $\CX$, we have $\CS(\CX,\sus^{d}T^{*})= 0$. 
Hence, $F_{\CX}(\gamma^{T}) = 0$ as well.
\end{proof}

\begin{prop}
\label{prop:generators-of-N-R-are-trivial-in-G-group-of-R-induced-exangulated-structure-on-S}

Let $T\in\ind\CT \setminus \ind\CX$ and consider the exchange $(d+2)$-angles 
\eqref{eqn:exchange-d+2-angle-gamma-T-star}
and 
\eqref{eqn:exchange-d+2-angle-gamma-T}. 
Then each generator  
\[
\sum_{i=1}^{d} (-1)^{i} [X^{d+1-i}] - \sum_{i=1}^{d} (-1)^{i} [Y^{i}]
\]
of $N_{\CX}$ vanishes under $\wt{Q}L\colon K_{0}^{\sp}(\CT)\to K_{0}(\CS,\BE_{\CX},\fs_{\CX})$. 

\end{prop}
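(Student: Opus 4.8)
The plan is to exhibit the generator of $N_{\CX}$ attached to $T$ as, up to sign, the image under $L$ of an Euler relation coming from an $\fs_{\CX}$-conflation in $(\CS,\BE_{\CX},\fs_{\CX})$; then applying $\wt{Q}$ kills it. Concretely, recall from the definition of $L$ that $L([T']) = [T']$ for each $T'\in\CT$, and recall from Remark~\ref{rem:remarks-on-dplus2-angulated-index}(i) together with the computation in the proof of Theorem~\ref{thm:d-index-gives-isomorphism-G-group-of-T-induced-exangulated-structure-on-S-to-split-G-group-of-T} that for any object $S\in\CS$ the class $[S]$ in $K_{0}(\CS,\BE_{\CT},\fs_{\CT})$ equals $L(\dindx{\CT}([S]))$. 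So I would first rewrite the generator
\[
\sum_{i=1}^{d} (-1)^{i}[X^{d+1-i}] - \sum_{i=1}^{d} (-1)^{i}[Y^{i}]
\]
of $N_{\CX}$ purely in terms of $\dindx{\CT}$-values of $T$ and $T^{*}$. Using the exchange $(d+2)$-angle \eqref{eqn:exchange-d+2-angle-gamma-T} (which has $X^{0}=T^{*}$, $X^{d+1}=T$, and $X^{i}\in\CT$ for $1\le i\le d$), the definition of $\dindx{\CT}$ applied to $T^{*}$ via this $(d+2)$-angle gives $\dindx{\CT}(T^{*})=\sum_{i=0}^{d}(-1)^{i}[X^{d-i}]$, so $\sum_{i=1}^{d}(-1)^{i}[X^{d+1-i}] = -[T] - (-1)^{d+1}\dindx{\CT}(T^{*}) \cdot(\pm 1)$ after re-indexing; similarly, from \eqref{eqn:exchange-d+2-angle-gamma-T-star} (with $Y^{0}=T$, $Y^{d+1}=T^{*}$, $Y^{i}\in\CT$ for $1\le i\le d$) the index of $T^{*}$ computed via \emph{that} $(d+2)$-angle, or rather the index of $T$, relates $\sum_{i=1}^{d}(-1)^{i}[Y^{i}]$ to $[T]$, $[T^{*}]$ and $\dindx{\CT}(T^{*})$. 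I would carry out this bookkeeping carefully (the sign juggling with $(-1)^{d+1}$ and the reversal in Remark~\ref{rem:reversing-order-of-euler-relation} is the fiddly part) to arrive at an identity of the shape: the generator of $N_{\CX}$ equals $\pm(\dindx{\CT}$ of something $)$, or more usefully, equals $\pm\chi$ of a specific complex.

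Second, and this is the conceptual heart: I claim that the two exchange $(d+2)$-angles splice together into an $\fs_{\CX}$-conflation. Indeed, by Lemma~\ref{lem:gamma-T-gamma-T-star-vanish-under-F-wrt-T-and-X} we have $F_{\CX}(\gamma^{T^{*}})=0=F_{\CX}(\gamma^{T})$, which by Remark~\ref{rem:on-setup-3-3}\ref{rem-3-3-iii} (in the form that $\delta\in\BE_{\CX}(-,-)$ iff $F_{\CX}(\delta)=0$, i.e.\ Remark~\ref{rem:induced-subfunctor-when-D-additive-subcategory-of-n-angulated-category} applied with $\DD=\CX$) says precisely that both $\gamma^{T^{*}}$ and $\gamma^{T}$ lie in $\BE_{\CX}$. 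Hence each of \eqref{eqn:exchange-d+2-angle-gamma-T-star} and \eqref{eqn:exchange-d+2-angle-gamma-T}, viewed as a $(d+2)$-angle, \emph{is} an $\fs_{\CX}$-distinguished $d$-exangle in $(\CS,\BE_{\CX},\fs_{\CX})$, so the underlying complexes are $\fs_{\CX}$-conflations. Therefore in $K_{0}(\CS,\BE_{\CX},\fs_{\CX})=K_{0}^{\sp}(\CS)/\CI_{\CX}$ the Euler relations $\chi$ of these two complexes vanish. Writing out $\chi$ of the complex underlying \eqref{eqn:exchange-d+2-angle-gamma-T} gives a relation $[T^{*}] - [X^{1}] + \cdots + (-1)^{d+1}[T] = 0$, and similarly for \eqref{eqn:exchange-d+2-angle-gamma-T-star}; taking the appropriate $\BZ$-linear combination of these two vanishing relations produces exactly $\sum_{i=1}^{d}(-1)^{i}[X^{d+1-i}] - \sum_{i=1}^{d}(-1)^{i}[Y^{i}] = 0$ in $K_{0}(\CS,\BE_{\CX},\fs_{\CX})$ (the $[T]$ and $[T^{*}]$ terms cancel between the two relations, which is why the sum in $N_{\CX}$ runs only over $i=1,\dots,d$ and not $i=0,\dots,d+1$).

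Third, I assemble the two pieces. Since $\wt{Q}$ is, by the commutativity of \eqref{eqn:diagram-of-Qs}, the map $K_{0}^{\sp}(\CS)/\CI_{\CT} \to K_{0}^{\sp}(\CS)/\CI_{\CX}$ induced by the inclusion $\CI_{\CT}\subseteq\CI_{\CX}$, and $L$ sends $[T']\mapsto[T']$, the composite $\wt{Q}L\colon K_{0}^{\sp}(\CT)\to K_{0}(\CS,\BE_{\CX},\fs_{\CX})$ sends a generator $[T']$ ($T'\in\CT$) to the class of $[T']$ in $K_{0}^{\sp}(\CS)/\CI_{\CX}$. Hence $\wt{Q}L$ applied to the $N_{\CX}$-generator is literally the image in $K_{0}^{\sp}(\CS)/\CI_{\CX}$ of $\sum_{i=1}^{d}(-1)^{i}[X^{d+1-i}] - \sum_{i=1}^{d}(-1)^{i}[Y^{i}]$ computed in $K_{0}^{\sp}(\CS)$, which by Step two is zero. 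This completes the proof.

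**Expected main obstacle.** The genuinely delicate part is Step one and the bookkeeping in Step three: making sure that the particular $\BZ$-linear combination of the two $\chi$-relations in $K_{0}^{\sp}(\CS)/\CI_{\CX}$ that I take reproduces \emph{exactly} the $N_{\CX}$-generator, with all signs correct and with the $[T],[T^{*}]$ terms cancelling. One has $\chi$ of the complex underlying \eqref{eqn:exchange-d+2-angle-gamma-T} equal to $[T^{*}] + \sum_{i=1}^{d}(-1)^{i}[X^{i}] + (-1)^{d+1}[T]$ and $\chi$ of the complex underlying \eqref{eqn:exchange-d+2-angle-gamma-T-star} equal to $[T] + \sum_{i=1}^{d}(-1)^{i}[Y^{i}] + (-1)^{d+1}[T^{*}]$; one needs to check that adding $(-1)^{?}$ times the first to $(-1)^{?}$ times the second — where the exponents depend on $d$ and on the reindexing $X^{d+1-i}\leftrightarrow X^{i}$ — yields precisely $\pm\bigl(\sum_{i=1}^{d}(-1)^{i}[X^{d+1-i}] - \sum_{i=1}^{d}(-1)^{i}[Y^{i}]\bigr)$. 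This is entirely mechanical but is exactly the place where an off-by-one in the sign would break the argument, so it must be done with care; everything else (that the $\gamma$'s lie in $\BE_{\CX}$, that $\fs_{\CX}$-conflations give relations, that $\wt{Q}L$ is the obvious map) is formal given the results already established.
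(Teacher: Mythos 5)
Your Steps 2 and 3 are exactly the paper's argument: both exchange $(d+2)$-angles are $\fs_{\CX}$-conflations by Lemma~\ref{lem:gamma-T-gamma-T-star-vanish-under-F-wrt-T-and-X} and Remark~\ref{rem:induced-subfunctor-when-D-additive-subcategory-of-n-angulated-category}, so their Euler relations vanish in $K_{0}(\CS,\BE_{\CX},\fs_{\CX})$, and the generator of $N_{\CX}$ works out to be $(-1)^{d+1}\chi(X^{\combul}) - \chi(Y^{\combul})$ (the $[T]$ and $[T^{*}]$ terms cancel, as you predicted). The "Step 1" detour through $\dindx{\CT}$-values is unnecessary — once you know $X^i, Y^i\in\CT$ for $1\le i\le d$ (Proposition~\ref{prop:mutation-is-unique-up-to-isomorphism}) and that $\wt{Q}L$ acts as the identity on $[T']$ for $T'\in\CT$, everything is a direct Euler-relation computation, and the sign bookkeeping you flag as the "expected main obstacle" does indeed close without trouble.
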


\begin{proof}

By Proposition~\ref{prop:mutation-is-unique-up-to-isomorphism}, we have that $X^{i},Y^{i}$ lie in $\CT$ for $1\leq i \leq d$. 
As $F_{\CX}(\gamma^{T^{*}}) = F_{\CX}(\gamma^{T}) = 0$ by Lemma~\ref{lem:gamma-T-gamma-T-star-vanish-under-F-wrt-T-and-X}, we see that 
\[
\begin{tikzcd}
Y^{0} \arrow{r}& Y^{1} \arrow{r}&\cdots\arrow{r}& Y^{d}\arrow{r} & Y^{d+1}\arrow[dashed]{r}{\gamma^{T^{*}}} & \sus^{d} Y^{0}, \\
X^{0} \arrow{r}& X^{1} \arrow{r}&\cdots\arrow{r}& X^{d}\arrow{r} & X^{d+1}\arrow[dashed]{r}{\gamma^{T}} & \sus^{d} X^{0}
\end{tikzcd}
\]
are $\fs_{\CX}$-distinguished $d$-exangles in the $d$-exangulated category $(\CS,\BE_{\CX},\fs_{\CX})$. 
So $\chi(Y^{\combul}) = \sum_{i=0}^{d+1} (-1)^{i} [Y^{i}]$ and 
$\chi(X^{\combul}) = \sum_{i=0}^{d+1} (-1)^{i} [X^{i}]$ are both trivial in $K_{0}(\CS,\BE_{\CX},\fs_{\CX})$. 
Then, since $Y^{0} = X^{d+1} = T$ and $Y^{d+1} = X^{0} = T^{*}$,  we see that
\begin{align*}
&\wt{Q}L\left(\sum_{i=1}^{d} (-1)^{i} [X^{d+1-i}] - \sum_{i=1}^{d} (-1)^{i} [Y^{i}]\right) \\
	&\hspace{1cm}=	\sum_{i=1}^{d} (-1)^{i} [X^{d+1-i}] - \sum_{i=1}^{d} (-1)^{i} [Y^{i}] \\
	&\hspace{1cm}= 	\sum_{i=0}^{d+1} (-1)^{i} [X^{d+1-i}] - (-1)^{0}[X^{d+1}] - (-1)^{d+1}[X^{0}] - \sum_{i=1}^{d} (-1)^{i} [Y^{i}] \\
	&\hspace{1cm}=	(-1)^{d+1}\chi(X^{\combul}) - (-1)^{0}[Y^{0}] - (-1)^{d+1}[Y^{d+1}] - \sum_{i=1}^{d} (-1)^{i} [Y^{i}] &&	\\
	&\hspace{1cm}= 	(-1)^{d+1}\chi(X^{\combul}) - \chi(Y^{\combul})\\
	&\hspace{1cm}= 	[0]. &&\qedhere
\end{align*}

\end{proof}

As an immediate consequence, we have:

\begin{cor}
\label{cor:existence-of-G-R}

There is a homomorphism $G_{\CX}\colon K_{0}^{\sp}(\CT)/N_{\CX} \to K_{0}(\CS,\BE_{\CX},\fs_{\CX})$ making the following diagram commute. 
\[
\begin{tikzcd}
K_{0}^{\sp}(\CT)
	\arrow[two heads]{r}{Q_{N_{\CX}}}
	\arrow{d}[swap]{L} 
&	K_{0}^{\sp}(\CT)/N_{\CX}
		\arrow[dotted]{d}{G_{\CX}}\\
K_{0}(\CS,\BE_{\CT},\fs_{\CT})
	\arrow[two heads]{r}{\wt{Q}}
&	K_{0}(\CS,\BE_{\CX},\fs_{\CX})
\end{tikzcd}
\]
\end{cor}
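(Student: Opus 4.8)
**

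The plan is to obtain $G_{\CX}$ directly from the universal property of the quotient $Q_{N_{\CX}}\colon K_{0}^{\sp}(\CT)\to K_{0}^{\sp}(\CT)/N_{\CX}$. We already have the composite $\wt{Q}L\colon K_{0}^{\sp}(\CT)\to K_{0}(\CS,\BE_{\CX},\fs_{\CX})$, where $L$ is the inverse of $\dindx{\CT}$ from Theorem~\ref{thm:d-index-gives-isomorphism-G-group-of-T-induced-exangulated-structure-on-S-to-split-G-group-of-T} and $\wt{Q}$ is the canonical surjection from the commutative diagram \eqref{eqn:diagram-of-Qs}. By Proposition~\ref{prop:generators-of-N-R-are-trivial-in-G-group-of-R-induced-exangulated-structure-on-S}, every generator of $N_{\CX}$ maps to $[0]$ under $\wt{Q}L$, so $N_{\CX}\sse\Ker(\wt{Q}L)$. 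The universal property of the cokernel then produces a unique group homomorphism $G_{\CX}\colon K_{0}^{\sp}(\CT)/N_{\CX}\to K_{0}(\CS,\BE_{\CX},\fs_{\CX})$ with $G_{\CX}\circ Q_{N_{\CX}} = \wt{Q}L$, which is exactly the commutativity asserted in the statement.

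Concretely, I would first note that $\wt{Q}L$ is a well-defined group homomorphism, being a composite of two group homomorphisms already constructed earlier in the paper. Second, I would invoke Proposition~\ref{prop:generators-of-N-R-are-trivial-in-G-group-of-R-induced-exangulated-structure-on-S}: since $N_{\CX}$ is generated (as a subgroup of $K_{0}^{\sp}(\CT)$) by elements of the form $\sum_{i=1}^{d}(-1)^{i}[X^{d+1-i}] - \sum_{i=1}^{d}(-1)^{i}[Y^{i}]$ coming from exchange $(d+2)$-angles with $T\in\ind\CT\setminus\ind\CX$, and each such generator lies in $\Ker(\wt{Q}L)$, the whole subgroup $N_{\CX}$ is contained in $\Ker(\wt{Q}L)$. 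Third, I would apply the universal property of $Q_{N_{\CX}}$ to factor $\wt{Q}L$ through the quotient, obtaining $G_{\CX}$ and the commutativity of the square.

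There is essentially no obstacle here: this is a purely formal consequence of the preceding proposition, which is why the corollary is stated as an immediate consequence. The only minor point worth making explicit is that $N_{\CX}$ is by definition the subgroup \emph{generated} by those elements, so containment of the generators in the kernel suffices for containment of $N_{\CX}$; this is automatic since the kernel of a homomorphism is a subgroup. The substantive work — verifying that the generators vanish, which rests on Lemma~\ref{lem:gamma-T-gamma-T-star-vanish-under-F-wrt-T-and-X} and the identification of $\fs_{\CX}$-distinguished $d$-exangles — has already been carried out in Proposition~\ref{prop:generators-of-N-R-are-trivial-in-G-group-of-R-induced-exangulated-structure-on-S}.
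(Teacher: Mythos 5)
Your argument is correct and matches the paper's exactly: the corollary is stated as an immediate consequence of Proposition~\ref{prop:generators-of-N-R-are-trivial-in-G-group-of-R-induced-exangulated-structure-on-S}, and the intended reasoning is precisely the universal property of the quotient applied to $\wt{Q}L$ once the generators of $N_{\CX}$ are known to vanish. No gaps.
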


The commuting square in Corollary~\ref{cor:existence-of-G-R} provides the lower half of diagram \eqref{eqn:diagram-of-homs} in Theorem~\ref{thm:diagram-of-homomorphisms}. 
We prove in Subsection~\ref{sec:G-X-is-an-isomorphism} that $G_{\CX}$ is actually an isomorphism under the conditions of Setup~\ref{setup:F-lands-in-finite-length-modules}; see Theorem~\ref{thm:G-R-is-an-isomorphism}.


\subsection{A formula for \texorpdfstring{$\theta$}{theta} on simples}
\label{sec:formula-for-theta-on-simples}

In this subsection we derive a formula for the homomorphism $\theta\colon K_{0}(\rmod{\CT}) \to K_{0}^{\sp}(\CT)$ from Theorem~\ref{thm:Euler-sum-of-d-indices-equals-theta-of-image-of-connecting-map} on the simple objects in $\rmod{\CT}$. 
Recall that we are in the situation of Setup~\ref{setup:assume-Im-Fgamma-is-simple}. 
We remark that the condition of this setup is satisfied when $d=1$ and $\CT$ is part of a \emph{cluster structure} in the sense of \cite[p.\ 1039]{BuanIyamaReitenScott-cluster-structures-for-2-calabi-yau-categories-and-unipotent-groups}. 
In the context of Setup~\ref{setup:assume-Im-Fgamma-is-simple} in general, there is 
(see \cite[Prop.\ 2.3(b)]{Auslander-Rep-theory-of-Artin-algebras-II}) 
a bijective correspondence: 
\begin{align}
\Set{\begin{array}{c}\text{simple modules in } \rMod{\CT}\\ \text{ up to isomorphism}\end{array}} 	&\longleftrightarrow	\ind\CT\nonumber\\[1em]
\ol{S}_{T} = \CT(-,T)/ \rad_{\CT}(-,T)&\longleftrightarrow	T. \label{eqn:simple-at-T}
\end{align}
It follows from Remark~\ref{rem:flX-is-abelian-and-inside-modX} that, for $T\in\ind\CT\setminus\ind\CX$, we have 
$\ol{S}_{T} \in \rmod{\CT}$ and so $\theta([\ol{S}_{T}])$ makes sense. 
The next result is a generalisation of the triangulated version stated in \cite[1.5(ii)]{JorgensenPalu-a-caldero-chapoton-map-for-infinite-clusters}. It plays an important role in the proof of Theorem~\ref{thm:G-R-is-an-isomorphism}, the main result of Section~\ref{sec:K0TmodNX-as-a-grothendieck-group}.

\begin{prop}
\label{prop:theta-of-simple-in-modT-is-in-N-R}

Suppose $T\in\ind\CT\setminus\ind\CX$. 
Then we have
\[
\theta([\ol{S}_{T}]) = \sum_{i=1}^{d} (-1)^{i} [X^{d+1-i}] - \sum_{i=1}^{d} (-1)^{i} [Y^{i}],
\] 
where $Y^{\combul},X^{\combul}$ are the exchange $(d+2)$-angles \eqref{eqn:exchange-d+2-angle-gamma-T-star} and \eqref{eqn:exchange-d+2-angle-gamma-T}, respectively, for $T$. 
In particular, $\theta([\ol{S}_{T}])\in N_{\CX}$. 

\end{prop}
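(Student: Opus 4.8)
The plan is to apply Theorem~\ref{thm:Euler-sum-of-d-indices-equals-theta-of-image-of-connecting-map} to the exchange $(d+2)$-angle \eqref{eqn:exchange-d+2-angle-gamma-T} and then unwind the resulting alternating sum of $(d+2)$-angulated indices. Since $\Im F_{\CT}(\gamma^{T})\iso\ol{S}_{T}$ by Setup~\ref{setup:assume-Im-Fgamma-is-simple}, applying Theorem~\ref{thm:Euler-sum-of-d-indices-equals-theta-of-image-of-connecting-map} to $X^{0}\to\cdots\to X^{d+1}\xrightarrow{\gamma^{T}}\sus^{d}X^{0}$ yields $\theta([\ol{S}_{T}]) = \sum_{i=0}^{d+1}(-1)^{i}\dindx{\CT}(X^{d+1-i})$. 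By Proposition~\ref{prop:mutation-is-unique-up-to-isomorphism}(i) and (iv) each of $X^{1},\dots,X^{d}$ lies in $\CT$, as does $X^{d+1}=T$, so $\dindx{\CT}(X^{k})=[X^{k}]$ for $1\leq k\leq d+1$ by Remark~\ref{rem:remarks-on-dplus2-angulated-index}(ii). Separating the terms $i=0$ and $i=d+1$, this gives
\[
\theta([\ol{S}_{T}]) = [T] + \sum_{i=1}^{d}(-1)^{i}[X^{d+1-i}] + (-1)^{d+1}\dindx{\CT}(T^{*}),
\]
so the whole problem reduces to evaluating $\dindx{\CT}(T^{*})$.

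For this I would use the \emph{other} exchange $(d+2)$-angle \eqref{eqn:exchange-d+2-angle-gamma-T-star}, namely $Y^{0}\to\cdots\to Y^{d}\to Y^{d+1}\xrightarrow{\gamma^{T^{*}}}\sus^{d}Y^{0}$ with $Y^{d+1}=T^{*}$. By Proposition~\ref{prop:mutation-is-unique-up-to-isomorphism}(i) and (iv) we have $Y^{0}=T\in\CT$ and $Y^{i}\in\CT$ for $1\leq i\leq d$, so this $(d+2)$-angle has exactly the shape required by Definition~\ref{def:d+2-angulated-index-wrt-T}; the morphisms $Y^{1}\to Y^{2},\dots,Y^{d}\to Y^{d+1}$ are radical by Proposition~\ref{prop:mutation-is-unique-up-to-isomorphism}(iii), and $Y^{0}\to Y^{1}$ is radical since $Y^{0}=T$ is indecomposable while $Y^{1}\in\add(\ind\CT\setminus\{T\})$ has no summand isomorphic to $T$ (alternatively, invoke the independence of the index from the chosen $(d+2)$-angle recorded in the proof of Proposition~\ref{prop:d-index-gives-hom-split-G-group-of-S-to-split-G-group-of-T}). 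Hence $\dindx{\CT}(T^{*}) = \sum_{i=0}^{d}(-1)^{i}[Y^{d-i}] = (-1)^{d}\sum_{i=0}^{d}(-1)^{i}[Y^{i}]$.

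Finally I would substitute and simplify. Reversing the order of summation (the computation in Remark~\ref{rem:reversing-order-of-euler-relation} is the relevant bookkeeping) turns $(-1)^{d+1}\dindx{\CT}(T^{*})$ into $-\sum_{i=0}^{d}(-1)^{i}[Y^{i}]$, whose $i=0$ term is $-[Y^{0}]=-[T]$ and cancels the leading $[T]$; what survives is exactly $\sum_{i=1}^{d}(-1)^{i}[X^{d+1-i}] - \sum_{i=1}^{d}(-1)^{i}[Y^{i}]$, the asserted expression. Membership $\theta([\ol{S}_{T}])\in N_{\CX}$ then follows immediately from Definition~\ref{def:N-X}, since $T\in\ind\CT\setminus\ind\CX$ and \eqref{eqn:exchange-d+2-angle-gamma-T-star}, \eqref{eqn:exchange-d+2-angle-gamma-T} are exchange $(d+2)$-angles for $T$. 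The only slightly delicate point is the middle step—recognising \eqref{eqn:exchange-d+2-angle-gamma-T-star} as a legitimate index resolution of $T^{*}$ and keeping track of signs through the two reindexings—but no idea beyond Theorem~\ref{thm:Euler-sum-of-d-indices-equals-theta-of-image-of-connecting-map} and Proposition~\ref{prop:mutation-is-unique-up-to-isomorphism} is required.
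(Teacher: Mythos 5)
Your proof is correct and follows the same overall outline as the paper's: both compute $\theta([\ol{S}_{T}])$ by applying Theorem~\ref{thm:Euler-sum-of-d-indices-equals-theta-of-image-of-connecting-map} to the exchange $(d+2)$-angle \eqref{eqn:exchange-d+2-angle-gamma-T}, reduce to evaluating $\dindx{\CT}(T^{*})$, and then substitute back. The one genuinely different step is how you evaluate $\dindx{\CT}(T^{*})$. You recognise \eqref{eqn:exchange-d+2-angle-gamma-T-star} directly as a $(d+2)$-angle of the shape demanded by Definition~\ref{def:d+2-angulated-index-wrt-T}, which requires you to verify that $\tau^{0},\dots,\tau^{d-1}$ are radical; Proposition~\ref{prop:mutation-is-unique-up-to-isomorphism}(iii) covers $Y^{1}\to Y^{2},\dots,Y^{d-1}\to Y^{d}$, and your supplementary argument for $Y^{0}\to Y^{1}$ (that $T$ is indecomposable and $Y^{1}\in\add(\ind\CT\setminus\{T\})$ has no summand isomorphic to $T$) is sound. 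The paper instead invokes Lemma~\ref{lem:gamma-T-gamma-T-star-vanish-under-F-wrt-T-and-X}, which gives $F_{\CT}(\gamma^{T^{*}})=0$, and then applies Theorem~\ref{thm:Euler-sum-of-d-indices-equals-theta-of-image-of-connecting-map} a second time to \eqref{eqn:exchange-d+2-angle-gamma-T-star} to conclude $\sum_{i=0}^{d+1}(-1)^{i}\dindx{\CT}(Y^{d+1-i})=0$, from which $\dindx{\CT}(T^{*})$ is read off using $\dindx{\CT}(Y^{i})=[Y^{i}]$ for $i\leq d$. The paper's route avoids the radical-condition check entirely (and is the reason Lemma~\ref{lem:gamma-T-gamma-T-star-vanish-under-F-wrt-T-and-X} records the $F_{\CT}(\gamma^{T^{*}})$ vanishing); yours buys a slightly more direct, definition-level computation at the cost of that small verification. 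Both land on $\dindx{\CT}(T^{*})=\sum_{i=0}^{d}(-1)^{i}[Y^{d-i}]$, and the remaining algebra in your write-up is accurate.
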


\begin{proof}

Let $T\in\ind\CT\setminus\ind\CX$, and denote by $T^{*}$ its mutation. 
Then $F_{\CT}(\gamma^{T^{*}}) = 0$ by Lemma~\ref{lem:gamma-T-gamma-T-star-vanish-under-F-wrt-T-and-X}. 
Thus, by Theorem~\ref{thm:Euler-sum-of-d-indices-equals-theta-of-image-of-connecting-map}, we have 
\begin{align*}
0 	&= \theta([\Im F_{\CT}(\gamma^{T^{*}})]) \\
	&= \sum_{i=0}^{d+1} (-1)^{i} \dindx{\CT}(Y^{d+1-i}) \\
	&= \sum_{i=1}^{d+1} (-1)^{i} [Y^{d+1-i}] + \dindx{\CT}(T^{*}).
\end{align*}
Rearranging and multiplying by $(-1)^{d+1}$ yields
\begin{align*}
(-1)^{d+1}\dindx{\CT}(T^{*}) 
	&=-\sum_{i=1}^{d+1} (-1)^{d+1-i} [Y^{d+1-i}] \\
	&= -\sum_{i=0}^{d} (-1)^{i} [Y^{i}] \\
	&= -\left( \sum_{i=1}^{d} (-1)^{i} [Y^{i}] + [T] \right).
\end{align*}
Then we see
\begin{align*}
\theta([\ol{S}_{T}]) 	
	&= \theta ([\Im F_{\CT}(\gamma^{T})]) && \text{by Setup~\ref{setup:assume-Im-Fgamma-is-simple}}\\
	&= \sum_{i=0}^{d+1} (-1)^{i} \dindx{\CT}(X^{d+1-i}) && \text{by Theorem~\ref{thm:Euler-sum-of-d-indices-equals-theta-of-image-of-connecting-map}} \\
	&= [T] + \sum_{i=1}^{d} (-1)^{i} [X^{d+1-i}] + (-1)^{d+1}\dindx{\CT}(T^{*}) && 
	\hspace{-5pt}
	\begin{array}[t]{l}
	\text{since } X^{d+1} = T, X^{0} = T^{*}\text{,}\\
	\text{and } X^{j}\in\CT \text{ for all } 1\leq j \leq d\\
	\text{by Proposition~\ref{prop:mutation-is-unique-up-to-isomorphism}}
	\end{array}\\
	&= [T] + \sum_{i=1}^{d} (-1)^{i} [X^{d+1-i}] - \left( \sum_{i=1}^{d} (-1)^{i} [Y^{i}] + [T] \right) && \text{from the observation above}\\
	&= \sum_{i=1}^{d} (-1)^{i} [X^{d+1-i}] - \sum_{i=1}^{d} (-1)^{i} [Y^{i}]. &&\qedhere
\end{align*}
\end{proof}


\subsection{\texorpdfstring{$G_{\CX}$}{G-X} is an isomorphism}
\label{sec:G-X-is-an-isomorphism}

In this subsection we are in the case of Setup~\ref{setup:F-lands-in-finite-length-modules}.
We will show that the homomorphism
\[
G_{\CX}\colon K_{0}^{\sp}(\CT)/N_{\CX} \to K_{0}(\CS,\BE_{\CX},\fs_{\CX}),
\]
which satisfies $\wt{Q} \circ L = G_{\CX} \circ Q_{N_{\CX}}$ (see Corollary~\ref{cor:existence-of-G-R}), is an isomorphism of abelian groups.

\begin{rem}
\label{remark:locally-bounded-implies-F-lands-in-finite-length-modules}
	Setup~\ref{setup:F-lands-in-finite-length-modules} implies that, for any morphism $\gamma\colon S^{d+1} \to \sus^{d}S^{0}$ in $\CS$, the (target of the) image $\Im F_{\CT}(\gamma)$ also lies in $\fl{\CT}$, because it is a subobject of the finite length object $F_{\CT}(\sus^{d}S^{0})$. 
\end{rem}

We can now prove our main result.

\begin{thm}
\label{thm:G-R-is-an-isomorphism}

$G_{\CX}
$ 
is an isomorphism.

\end{thm}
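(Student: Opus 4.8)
The strategy is to exhibit an inverse to $G_{\CX}$, using the $(d+2)$-angulated index $\dindx{\CT}$ as the raw material. Recall from Theorem~\ref{thm:d-index-gives-isomorphism-G-group-of-T-induced-exangulated-structure-on-S-to-split-G-group-of-T} that $\dindx{\CT}\colon K_{0}^{\sp}(\CS)\to K_{0}^{\sp}(\CT)$ is a group homomorphism and that its composition $Q_{N_{\CX}}\circ\dindx{\CT}\colon K_{0}^{\sp}(\CS)\to K_{0}^{\sp}(\CT)/N_{\CX}$ sends $[T]\mapsto[T]$ for $T\in\CT$. The plan is to show this composition factors through $Q_{\CX}\colon K_{0}^{\sp}(\CS)\to K_{0}(\CS,\BE_{\CX},\fs_{\CX})$, yielding a homomorphism $H_{\CX}\colon K_{0}(\CS,\BE_{\CX},\fs_{\CX})\to K_{0}^{\sp}(\CT)/N_{\CX}$, and then to verify $G_{\CX}$ and $H_{\CX}$ are mutually inverse.

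First I would establish the factorisation. It suffices to show that $\CI_{\CX}\leq\ker(Q_{N_{\CX}}\circ\dindx{\CT})$, i.e.\ that for every $\fs_{\CX}$-conflation $S^{\combul}$ in $(\CS,\BE_{\CX},\fs_{\CX})$ — completing to a $(d+2)$-angle $S^{0}\to\cdots\to S^{d+1}\xrightarrow{\gamma}\sus^{d}S^{0}$ with $\gamma\in\BE_{\CX}(S^{d+1},S^{0})$, equivalently $F_{\CX}(\gamma)=0$ — the element $Q_{N_{\CX}}(\dindx{\CT}(\chi(S^{\combul})))$ vanishes. By Theorem~\ref{thm:Euler-sum-of-d-indices-equals-theta-of-image-of-connecting-map} together with Remark~\ref{rem:reversing-order-of-euler-relation}, the alternating sum $\sum_{i=0}^{d+1}(-1)^{i}\dindx{\CT}(S^{i})$ equals $\pm\,\theta([\Im F_{\CT}(\gamma)])$, so the task reduces to showing $\theta([\Im F_{\CT}(\gamma)])\in N_{\CX}$ whenever $F_{\CX}(\gamma)=0$. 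Here Setup~\ref{setup:F-lands-in-finite-length-modules} enters: $F_{\CT}(\gamma)$, hence $\Im F_{\CT}(\gamma)$, lies in $\fl{\CT}$ by Remark~\ref{remark:locally-bounded-implies-F-lands-in-finite-length-modules}, so $[\Im F_{\CT}(\gamma)]$ is a finite $\BZ$-combination of classes of simples $[\ol{S}_{T}]$, $T\in\ind\CT$. Since $\theta$ is a homomorphism, $\theta([\Im F_{\CT}(\gamma)])$ is the corresponding combination of the $\theta([\ol{S}_{T}])$. For $T\in\ind\CT\setminus\ind\CX$, Proposition~\ref{prop:theta-of-simple-in-modT-is-in-N-R} gives $\theta([\ol{S}_{T}])\in N_{\CX}$ directly. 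The remaining point — the genuine obstacle — is to handle the simples $\ol{S}_{T}$ with $T\in\ind\CX$: one must argue that such simples do not occur in the composition series of $\Im F_{\CT}(\gamma)$ when $F_{\CX}(\gamma)=0$. This should follow because $F_{\CX}(\gamma)=0$ forces $\Im F_{\CT}(\gamma)$ to be annihilated by $\CX$, i.e.\ to lie in $\fl{\CT}$ supported away from $\ind\CX$; concretely, if the top (or socle) of $\Im F_{\CT}(\gamma)$ had a composition factor $\ol{S}_{T}$ with $T\in\CX$, one could compose $\gamma$ with a morphism from $T$ to contradict $F_{\CX}(\gamma)=0$, using that $F_{\CX}$ is the restriction of $F_{\CT}$ along $\CX\sse\CT$ and the correspondence \eqref{eqn:simple-at-T}. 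Once established, $\theta([\Im F_{\CT}(\gamma)])$ is a combination of the $\theta([\ol{S}_{T}])$ with $T\notin\ind\CX$, hence lies in $N_{\CX}$, as required.

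Granting the factorisation, there is a homomorphism $H_{\CX}\colon K_{0}(\CS,\BE_{\CX},\fs_{\CX})\to K_{0}^{\sp}(\CT)/N_{\CX}$ with $H_{\CX}\circ Q_{\CX}=Q_{N_{\CX}}\circ\dindx{\CT}$. It then remains to check $H_{\CX}$ and $G_{\CX}$ are inverse. For $H_{\CX}\circ G_{\CX}=\mathrm{id}$: both sides are homomorphisms out of $K_{0}^{\sp}(\CT)/N_{\CX}$, so by surjectivity of $Q_{N_{\CX}}$ it suffices to check on $Q_{N_{\CX}}([T])$ for $T\in\CT$; using the defining relations $G_{\CX}\circ Q_{N_{\CX}}=\wt{Q}\circ L$ (Corollary~\ref{cor:existence-of-G-R}), $Q_{\CX}=\wt{Q}\circ Q_{\CT}$ (diagram \eqref{eqn:diagram-of-Qs}), and $\dindx{\CT}\circ L=\mathrm{id}$ together with $L([T])=[T]$, one computes $H_{\CX}(G_{\CX}(Q_{N_{\CX}}([T])))=H_{\CX}(Q_{\CX}([T]))=Q_{N_{\CX}}(\dindx{\CT}([T]))=Q_{N_{\CX}}([T])$. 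For $G_{\CX}\circ H_{\CX}=\mathrm{id}$: by surjectivity of $Q_{\CX}$ it suffices to check on $Q_{\CX}([S])$ for $S\in\CS$; one finds $G_{\CX}(H_{\CX}(Q_{\CX}([S])))=G_{\CX}(Q_{N_{\CX}}(\dindx{\CT}([S])))=\wt{Q}(L(\dindx{\CT}([S])))=\wt{Q}([S])=Q_{\CX}([S])$, where the penultimate step uses $L\circ\dindx{\CT}=\mathrm{id}$ on $K_{0}(\CS,\BE_{\CT},\fs_{\CT})$ from Theorem~\ref{thm:d-index-gives-isomorphism-G-group-of-T-induced-exangulated-structure-on-S-to-split-G-group-of-T}. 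Hence $G_{\CX}$ is an isomorphism. The main work, and the only non-formal ingredient, is the claim that $F_{\CX}(\gamma)=0$ prevents simples indexed by $\ind\CX$ from appearing in $\Im F_{\CT}(\gamma)$, reducing $\theta([\Im F_{\CT}(\gamma)])$ into $N_{\CX}$.
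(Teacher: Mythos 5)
Your proposal is correct, and the key lemma it hinges on is the same one the paper uses: that $\theta([\Im F_{\CT}(\gamma)])\in N_{\CX}$ whenever $\gamma$ is the connecting morphism of an $\fs_{\CX}$-distinguished $d$-exangle, proved by decomposing $\Im F_{\CT}(\gamma)\in\fl\CT$ into simple composition factors and checking none is indexed by $\ind\CX$. The packaging differs: the paper establishes surjectivity formally and injectivity by computing that any $g\in\ker G_{\CX}$ has $\dindx{\CT}(L(g))$ equal to a $\BZ$-linear combination of terms $\theta([\Im F_{\CT}(\gamma_{j})])$, all landing in $N_{\CX}$; you instead build an explicit inverse $H_{\CX}$ by showing $Q_{N_{\CX}}\circ\dindx{\CT}$ annihilates $\CI_{\CX}$ and then verify both composites are identities. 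The two routes are equivalent in content, with yours making the two-sided inverse relation between the diagonals of the lower square of \eqref{eqn:diagram-of-homs} a little more visible. One small imprecision worth flagging: in sketching the key claim you restrict attention to composition factors in the top or socle of $\Im F_{\CT}(\gamma)$, but the statement must cover \emph{all} composition factors. This is easily repaired, and indeed your preceding sentence already contains the right observation (namely that $F_{\CX}(\gamma)=0$ forces $\Im F_{\CT}(\gamma)$ to vanish on $\CX$): either evaluate at $T\in\ind\CX$ and use exactness of evaluation to see that $(\Im F_{\CT}(\gamma))(T)=0$ is incompatible with having $\ol{S}_{T}$ as any composition factor, or, as the paper does, apply the exact restriction functor $\iota^{*}\colon\fl\CT\to\fl\CX$ to the whole composition series at once and read off that each $T_{l}\notin\ind\CX$.
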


\begin{proof}

Firstly, since $\wt{Q}$ and $L$ are surjective (see \eqref{eqn:diagram-of-Qs} and Theorem~\ref{thm:d-index-gives-isomorphism-G-group-of-T-induced-exangulated-structure-on-S-to-split-G-group-of-T}, respectively), 
the equality $G_{\CX} Q_{N_{\CX}} = \wt{Q}L$ 
(see Corollary~\ref{cor:existence-of-G-R})
implies $G_{\CX}$ is surjective. 

To show $G_{\CX}$ is injective, let $g + N_{\CX} \in K_{0}^{\sp}(\CT)/N_{\CX}$ be an element such that $G_{\CX}(g + N_{\CX}) = 0$ in $K_{0}(\CS,\BE_{\CX},\fs_{\CX})$. We will show that $g + N_{\CX}$ is trivial in $K_{0}^{\sp}(\CT)/N_{\CX}$ , i.e.\ $g \in N_{\CX}$. 
As $G_{\CX}(g + N_{\CX}) = 0$ and $G_{\CX} Q_{N_{\CX}} = \wt{Q}L$, 
we have that 
\[
\wt{Q}(L(g))
	= G_{\CX} Q_{N_{\CX}} (g) 
	= G_{\CX}(g + N_{\CX}) 
	= 0.
\]
Since $\wt{Q}\colon K_{0}(\CS,\BE_{\CT},\fs_{\CT}) \to K_{0}(\CS,\BE_{\CX},\fs_{\CX}) = K_{0}^{\sp}(\CS)/\CI_{\CX}$ is the canonical surjection induced by the containment $\CI_{\CT}\sse \CI_{\CX}$, we see that $L(g) \in \CI_{\CX} + \CI_{\CT}$
in $K_{0}(\CS,\BE_{\CT},\fs_{\CT})$.

Recall that $\CI_{\CX}\leq K_{0}^{\sp}(\CS)$ (see Definition~\ref{def:subgroup-I-sub-D}) is generated by elements of the form 
$\chi(S^{\combul})$ (see Definition~\ref{def:euler-relation}), where $\lan S^{\combul} , \gamma\ran$ is an $\fs_{\CX}$-distinguished $d$-exangle
\[
\begin{tikzcd}
S^{0} \arrow{r}& S^{1} \arrow{r}&\cdots\arrow{r}& S^{d}\arrow{r} & S^{d+1}\arrow[dashed]{r}{\gamma} & \sus^{d} S^{0}
\end{tikzcd}
\]
in $(\CS,\BE_{\CX},\fs_{\CX})$. 
Thus, for some integer $s\geq 1$, we have 
\begin{equation}
\label{eqn:L-g-as-sum-of-chi-S-combuls}
L(g) 
	= \sum_{j=1}^{s} (-1)^{a_{j}}\chi(S_{j}^{\combul}) + \CI_{\CT}, 
\end{equation}
where, for each $1\leq j \leq s$, there is an $\fs_{\CX}$-distinguished $d$-exangle 
\[
\begin{tikzcd}
S_{j}^{0} \arrow{r}& S_{j}^{1} \arrow{r}&\cdots\arrow{r}& S_{j}^{d}\arrow{r} & S_{j}^{d+1}\arrow[dashed]{r}{\gamma_{j}} & \sus^{d} S_{j}^{0},
\end{tikzcd}
\]
and $a_{j}\in\{0,1\}$.
Hence, 
\begin{align*}
g &= \dindx{\CT}(L(g))&&\text{as } L^{-1} = \dindx{\CT} \text{ by Theorem~\ref{thm:d-index-gives-isomorphism-G-group-of-T-induced-exangulated-structure-on-S-to-split-G-group-of-T}}\\
	&=  \dindx{\CT} \Bigg(\sum_{j=1}^{s}(-1)^{a_{j}}\chi(S_{j}^{\combul}) + \CI_{\CT}\Bigg) &&\text{by \eqref{eqn:L-g-as-sum-of-chi-S-combuls}}\\
	&= \sum_{j=1}^{s} (-1)^{a_{j}} \sum_{i=0}^{d+1} (-1)^{i} \dindx{\CT}(S_{j}^{i}) &&\hspace{-5pt}\begin{array}[t]{l}
	\text{(note }\dindx{\CT}(\CI_{\CT})=0 \text{ since}\\K_{0}(\CS,\BE_{\CT},\fs_{\CT}) = K_{0}^{\sp}(\CS)/\CI_{\CT}\text{)}
	\end{array}\\
	&= (-1)^{d+1}\sum_{j=1}^{s}(-1)^{a_{j}} \sum_{i=0}^{d+1} (-1)^{i} \dindx{\CT}(S_{j}^{d+1-i})&& \text{(see Remark~\ref{rem:reversing-order-of-euler-relation})}\\
	&=(-1)^{d+1}\sum_{j=1}^{s}(-1)^{a_{j}} \theta([\Im F_{\CT}(\gamma_{j})]) && \text{by Theorem~\ref{thm:Euler-sum-of-d-indices-equals-theta-of-image-of-connecting-map}.}
\end{align*}

With this observation, it suffices to show that $\theta([\Im F_{\CT}(\gamma_{j})])\in N_{\CX}$ for each $1\leq j \leq s$. Thus, fix $j\in\{1,\ldots,s\}$. 
As $\Im F_{\CT}(\gamma_{j})$ is an object in $\fl\CT$, it admits a composition series 
\[
0= M_{0} < M_{1} < \cdots < M_{r-1} < M_{r} = \Im F_{\CT}(\gamma_{j})
\]
in $\rMod{\CT}$. 
Thus, for each $0\leq l \leq r-1$, there exists $T_{l}\in\ind\CT$ such that 
$M_{l+1}/M_{l}=\ol{S}_{T_{l}}$ is simple. 

We claim that $T_{l}\notin \ind\CX$ for all $0\leq l \leq r-1$. 
Assume, for contradiction, that $T_{l}\in\ind \CX$ for some $l\in\{0,\ldots, r-1\}$. 
Denote the simple objects in $\rmod{\CX}$ by $S_{X}$ for an object $X\in\ind\CX$. 
As shown in \cite[2.5]{HolmJorgensen-generalized-friezes-and-a-modified-caldero-chapoton-map-depending-on-a-rigid-object-2}, the inclusion $\iota\colon \CX \to \CT$ induces an exact functor $\iota^{*}\colon \fl\CT \to \fl\CX$, which, on simple objects, is given by 
\[
\iota^{*} \ol{S}_{T} = 	\begin{cases}
				S_T	& \text{if } T\in\ind\CX\\
				0	& \text{else.}
				\end{cases}
\]
In particular, we have $\iota^{*}M_{l+1}$ is a submodule of 
$\iota^{*}\Im F_{\CT}(\gamma_{j}) 
	= \Im F_{\CX}(\gamma_{j})$. 
But $F_{\CX}(\gamma_{j})$ is 0 since $\gamma_{j}$ factors through $\CX^{\perp_{0}}$, so 
$S_{T_{l}} 
= \iota^{*}\ol{S}_{T_{l}}
= \iota^{*}(M_{l+1}/M_{l})
= 0$, 
which is a contradiction as $S_{T_{l}}$ is simple. 
This proves the claim.

Using the composition series for $\Im F_{\CT}(\gamma_{j})$, we have 
\[
\theta([\Im F_{\CT}(\gamma_{j})]) 
= \theta([\ol{S}_{T_{0}}]) + \cdots + \theta([\ol{S}_{T_{r-1}}]).
\]
For each $l\in\{0,\ldots,r-1\}$ we have $\theta([\ol{S}_{T_{l}}])\in N_{\CX}$ by Proposition~\ref{prop:theta-of-simple-in-modT-is-in-N-R}, and hence $\theta([\Im F_{\CT}(\gamma_{j})])$ also lies in $N_{\CX}$. 
Since $j\in\{1,\ldots,s\}$ was arbitrary, we see that $g\in N_{\CX}$.
Hence,  $G_{\CX}$ is injective and, moreover, an isomorphism.
\end{proof}


\section{The \texorpdfstring{$\CX$}{X}-Caldero-Chapoton map}
\label{sec:re-modified-CC-map}

In this section, we describe the connection between our results and the modified Caldero-Chapoton map defined by Holm and J\o{}rgensen in \cite{HolmJorgensen-generalized-friezes-and-a-modified-caldero-chapoton-map-depending-on-a-rigid-object-2}.

\begin{setup}
\label{setup:5}

In this section, we suppose the following.

\begin{enumerate}[(\roman*)]

	\item $\field$ is an algebraically closed field.
	
	\item $\CC$ is a skeletally small, $\field$-linear, Krull-Schmidt, $\Hom$-finite, 2-Calabi-Yau triangulated category.
	
	\item 
	$\CX$ is a functorially finite, additive subcategory of $\CC$. 
	
	\item $\CT$ is a locally bounded, cluster tilting (i.e.\ 2-cluster tilting) subcategory of $\CC$, which belongs to a cluster structure in the sense of \cite[p.\ 1039]{BuanIyamaReitenScott-cluster-structures-for-2-calabi-yau-categories-and-unipotent-groups}.

	\item $\CX\sse \CT \sse \CC$. 

\end{enumerate}
\end{setup}

\begin{rem}
\label{rem:setup-5-implies-setups-3-and-4s}

Note that Setup~\ref{setup:5} implies: Setup~\ref{setup:initial-d-setup-C-T-S} is met with $d = 1$, $n = 2$ and $\CS = \CC$;  Setups~\ref{setup:all-indecomposable-objects-of-T-not-in-X-are-part-of-exchange-pair} and \ref{setup:assume-Im-Fgamma-is-simple} hold as $\CT$ is part of a cluster structure; and Setup~\ref{setup:F-lands-in-finite-length-modules} holds as $\CT$ is assumed to be locally bounded. 
Thus, all results from Sections~\ref{sec:cluster-tilting-subcategories} and \ref{sec:K0TmodNX-as-a-grothendieck-group} apply. 
Furthermore, it also follows from $\CT$ being part of a cluster structure that all indecomposables in $\CT$ have a mutation. 

\end{rem}

Over \cite{HolmJorgensen-generalized-friezes-and-a-modified-caldero-chapoton-map-depending-on-a-rigid-object-1} and \cite{HolmJorgensen-generalized-friezes-and-a-modified-caldero-chapoton-map-depending-on-a-rigid-object-2}, Holm and J\o{}rgensen constructed a modified Caldero-Chapoton map, which generalised the original of \cite{CalderoChapoton-cluster-algebras-as-hall-algebras-of-quiver-representations} in two ways: it only depends on a rigid subcategory and can take values in a general commutative ring $A$. This modified Caldero-Chapoton map depends on several other functions, which we recall now following \cite[Sec.\ 2]{HolmJorgensen-generalized-friezes-and-a-modified-caldero-chapoton-map-depending-on-a-rigid-object-2}. 

In our setup, $d=1$ so there are exchange triangles
\begin{align}
	\begin{tikzcd}[ampersand replacement=\&]
	T \arrow{r}\& Y \arrow{r} \& T^{*}\arrow{r}{\gamma^{T^{*}}} \& \sus T,
	\end{tikzcd} 			\label{eqn:exchange-triangle-gamma-T-star}\\
	\begin{tikzcd}[ampersand replacement=\&]
	T^{*}\arrow{r} \& X \arrow{r} \& T\arrow{r}{\gamma^{T}} \& \sus T^{*}
	\end{tikzcd}			\label{eqn:exchange-triangle-gamma-T}
\end{align}
for each $T\in\ind\CT$; see Subsection~\ref{sec:mutation-pairs}. 
Then the subgroup $N_{\CX} \leq K_{0}^{\sp}(\CT)$ from Definition~\ref{def:N-X} can now be written more simply as
\begin{equation}
\label{eqn:definition-of-N-X-in-Section-5}
N_{\CX} = \Braket{
	[X] - [Y] |   
	\begin{array}{l}
	\text{\eqref{eqn:exchange-triangle-gamma-T-star} and \eqref{eqn:exchange-triangle-gamma-T} are exchange triangles} \\ 
	\text{with } T\in\ind\CT\setminus\ind\CX 
	\end{array}
	},
\end{equation}
and we still have the quotient homomorphism 
$Q_{N_{\CX}} \colon K_{0}^{\sp}(\CT) \to  K_{0}^{\sp}(\CT) / N_{\CX}$.

Let $\iota\colon \CX \to \CT$ be the inclusion functor and 
$\iota^{*}\colon \fl\CT \to \fl\CX$ the induced exact functor, 
as in the proof of Theorem~\ref{thm:G-R-is-an-isomorphism}. 
Then $\iota^{*}$ induces a surjective homomorphism 
\[
\kappa\colon K_{0}(\fl \CT)\onto K_{0}(\fl \CX)
\]
on the level of Grothendieck groups, given by 
\[
\kappa([\ol{S}_{T}]) = 	\begin{cases}
					[S_T]	& \text{if } T\in\ind\CX\\
					0	& \text{if } T\in\ind\CT\setminus\ind\CX\text{.}
					\end{cases}
\]
Furthermore, there is a homomorphism 
\[
\ol{\phi}\colon K_{0}(\fl\CT) \to K_{0}^{\sp}(\CT)
\]
defined by 
\begin{equation}
\label{eqn:definition-of-phi-bar}
\ol{\phi}([\ol{S}_{T}]) = [Y] - [X]
\end{equation}
for each $T\in\ind\CT$, where $X,Y$ are as in \eqref{eqn:exchange-triangle-gamma-T-star} and \eqref{eqn:exchange-triangle-gamma-T}; 
see \cite[1.5(ii)]{JorgensenPalu-a-caldero-chapoton-map-for-infinite-clusters}. 
Post-composing with $L\colon K_{0}^{\sp}(\CT) \to K_{0}(\CC,\BE_{\CT},\fs_{\CT})$ 
(see e.g.\ 
Theorem~\ref{thm:diagram-of-homomorphisms}) gives a homomorphism 
\[
\ol{\psi}\deff L\ol{\phi}\colon  K_{0}(\fl\CT)\to K_{0}(\CC,\BE_{\CT},\fs_{\CT}),
\]
such that 
\[
\ol{\psi}([\ol{S}_{T}]) 
	= L([Y]) - L([X]) 
	= L\dindx{\CT}(Y) - L\dindx{\CT}(X) 
	= Q_{\CT}([Y]) - Q_{\CT}([X]).
\] 
Moreover, note that, for any $T\in\ind\CT\setminus\ind\CX$, we have 
\begin{align*}
\wt{Q}\ol{\psi}([\ol{S}_{T}]) 
	&= \wt{Q}L\ol{\phi}([\ol{S}_{T}]) && \text{as } \ol{\psi} = L\ol{\phi}\\
	&= G_{\CX} Q_{N_{\CX}} \ol{\phi}([\ol{S}_{T}]) && \text{by Theorem~\ref{thm:diagram-of-homomorphisms}}\\
	&= G_{\CX} Q_{N_{\CX}} ([Y] - [X]) && \text{by \eqref{eqn:definition-of-phi-bar}} \\
	&= 0 && \text{using \eqref{eqn:definition-of-N-X-in-Section-5}.}
\end{align*}
Therefore, $\wt{Q}\ol{\psi}$ factors uniquely through $\kappa$, i.e.\ there is a unique homomorphism 
\[
\psi \colon K_{0}(\fl \CX) \to K_{0}(\CC,\BE_{\CX},\fs_{\CX})
\]
making the following diagram commute. 
\begin{equation}\label{eqn:property-of-psi}
\begin{tikzcd}[row sep=0.3cm, column sep=1.2cm]
&   K_{0}(\fl\CT) \arrow{dl}[swap]{\ol{\phi}}\arrow{dd}{\ol{\psi}} \arrow[two heads]{r}{\kappa}&   K_{0}(\fl\CX)\arrow[dotted]{dd}{\psi}\\
K_{0}^{\sp}(\CT)\arrow{dr}[swap]{L} & &\\
&K_{0}(\CC,\BE_{\CT},\fs_{\CT}) \arrow[two heads]{r}{\wt{Q}}& K_{0}(\CC,\BE_{\CX},\fs_{\CX})
\end{tikzcd}
\end{equation}

\begin{defn}
\label{def:X-CC-map}

(cf.\ \cite[1.2]{HolmJorgensen-generalized-friezes-and-a-modified-caldero-chapoton-map-depending-on-a-rigid-object-2}) 
Let $A$ be a commutative ring. 
Suppose 
\[
\ol{\eps} \colon K_{0}(\CC,\BE_{\CX},\fs_{\CX}) \to A
\]
is a map, such that 
\begin{equation}\label{eqn:exponential-properties-of-eps}
\ol{\eps}(0) = 1
\hspace{1cm}\text{and} \hspace{1cm} 
\ol{\eps} (e+f) = \ol{\eps}(e)\ol{\eps}(f).
\end{equation} 
Define $\alpha \colon \obj(\CC)\to A$ and $\beta \colon K_{0}(\fl \CX)\to A$ as follows: 
\[
\alpha \deff \ol{\eps} \circ Q_{\CX}
\hspace{1cm}\text{and} \hspace{1cm}
\beta \deff \ol{\eps}  \circ \psi\text{.} 
\]
Let $C\in\obj(\CC)$. Then the \emph{$\CX$-Caldero-Chapoton map} is given by the formula
\[
\rho(C) 
= \alpha(C) \sum_{e}\chi\left(\text{Gr}_{e}(F_{\CX}(\sus C))\right) \beta(e),
\]
where $\text{Gr}_{e}(F_{\CX}(\sus C)) = \text{Gr}_{e}(\restr{\CC(-,\sus C)}{\CX})$ is the Grassmannian of finite length submodules $M\sse \restr{\CC(-,\sus C)}{\CX}$ in $\rMod{\CX}$ with $[M] = e\in K_{0}(\fl \CX)$, and $\chi$ is the Euler characteristic defined by \'{e}tale cohomology with proper support.

\end{defn}

\begin{rem}
\label{rem:5-4}

\begin{enumerate}[label=(\roman*)]

	\item 
	\label{remark-5-4-i}
	It is not immediately apparent that the formula for $\rho$ above is well-defined for all $C\in\obj(\CC)$. 
	However, if $F_{\CX}(\sus C)$ has finite length, then the sum over $e$ is finite and $\rho(C)$ gives an element of $A$.
	Recall that Setup~\ref{setup:F-lands-in-finite-length-modules} holds since we are in the situation of Setup~\ref{setup:5} (see Remark~\ref{rem:setup-5-implies-setups-3-and-4s}). Thus, $F_{\CT}(\sus C)$ has finite length for all $C\in\obj(\CC)$, and so 
	$\kappa (F_{\CT}(\sus C)) = F_{\CX}(\sus C)$ indeed has finite length. 

	\item The definition of $\rho$ is precisely how the original Caldero-Chapoton map is given in \cite{CalderoChapoton-cluster-algebras-as-hall-algebras-of-quiver-representations}, but with the involved maps and functors adjusted relative to the functorially finite, rigid subcategory $\CX\sse\CT$. 
	Indeed, replacing 
	$Q_{\CX}$, 
	$\psi$ 
	and 
	$\ol{\eps}$, respectively, 
	by $\indx{\CT}$, 
	$\ol{\phi}$, 
	and 
	a map $\ol{\eps}\colon K_{0}^{\sp}(\CT)\to A$ (still satisfying condition \eqref{eqn:exponential-properties-of-eps}), where $A$ is the Laurent polynomial ring $\BZ[\tensor[]{x}{_{T}}, \tensor[]{x}{_{T}^{-1}}\tensor[]{]}{_{T\in\ind{\CT}}}$, recovers the version of the Caldero-Chapoton map given in \cite[1.8]{JorgensenPalu-a-caldero-chapoton-map-for-infinite-clusters}. 

\end{enumerate}

\end{rem}

\begin{thm}

The $\CX$-Caldero-Chapoton map defined above recovers the modified Caldero-Chapoton map of Holm--J\o{}rgensen \cite{HolmJorgensen-generalized-friezes-and-a-modified-caldero-chapoton-map-depending-on-a-rigid-object-2}. 

\end{thm}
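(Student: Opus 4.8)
The plan is to set up an explicit correspondence between the data entering Definition~\ref{def:X-CC-map} and the data entering the construction of \cite{HolmJorgensen-generalized-friezes-and-a-modified-caldero-chapoton-map-depending-on-a-rigid-object-2}, and then to check that, under this correspondence, the two defining formulae agree term by term. By Remark~\ref{rem:setup-5-implies-setups-3-and-4s}, Setup~\ref{setup:5} guarantees that the modified Caldero-Chapoton map is defined in this situation and that all results of Sections~\ref{sec:cluster-tilting-subcategories} and \ref{sec:K0TmodNX-as-a-grothendieck-group} apply; in particular $G_{\CX}\colon K_{0}^{\sp}(\CT)/N_{\CX}\to K_{0}(\CC,\BE_{\CX},\fs_{\CX})$ is an isomorphism by Theorem~\ref{thm:G-R-is-an-isomorphism}. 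The correspondence is then: a map $\ol{\eps}\colon K_{0}(\CC,\BE_{\CX},\fs_{\CX})\to A$ obeying \eqref{eqn:exponential-properties-of-eps} is matched with $\ol{\eps}_{\mathrm{HJ}}\deff\ol{\eps}\circ G_{\CX}\colon K_{0}^{\sp}(\CT)/N_{\CX}\to A$, which again obeys the two exponential identities since $G_{\CX}$ is a group homomorphism; because $G_{\CX}$ is invertible, every admissible parameter on the Holm--J\o{}rgensen side arises this way. It remains to compare the factors $\alpha$, $\beta$ and the Grassmannian sum.

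For $\alpha$: on the Holm--J\o{}rgensen side $\alpha_{\mathrm{HJ}}=\ol{\eps}_{\mathrm{HJ}}\circ Q_{N_{\CX}}\circ\indx{\CT}$ whereas here $\alpha=\ol{\eps}\circ Q_{\CX}$. From Theorem~\ref{thm:diagram-of-homomorphisms} we have $Q_{\CX}=\wt{Q}\circ Q_{\CT}$ and $\wt{Q}\circ L=G_{\CX}\circ Q_{N_{\CX}}$ (Corollary~\ref{cor:existence-of-G-R}), while $L$ and $\dindx{\CT}$ are mutually inverse isomorphisms; combining these, $Q_{\CX}=G_{\CX}\circ Q_{N_{\CX}}\circ\dindx{\CT}$ as homomorphisms $K_{0}^{\sp}(\CC)\to K_{0}(\CC,\BE_{\CX},\fs_{\CX})$. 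Since $d=1$ we have $\dindx{\CT}([C])=\indx{\CT}(C)$, and hence $\alpha(C)=\ol{\eps}(Q_{\CX}([C]))=\ol{\eps}_{\mathrm{HJ}}(Q_{N_{\CX}}(\indx{\CT}(C)))=\alpha_{\mathrm{HJ}}(C)$ for every $C\in\obj(\CC)$.

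For $\beta$: the map $\nu$ of \cite{HolmJorgensen-generalized-friezes-and-a-modified-caldero-chapoton-map-depending-on-a-rigid-object-2} is the unique homomorphism with $Q_{N_{\CX}}\circ\ol{\phi}=\nu\circ\kappa$, and $\beta_{\mathrm{HJ}}=\ol{\eps}_{\mathrm{HJ}}\circ\nu$; our $\psi$ is the unique homomorphism with $\wt{Q}\circ\ol{\psi}=\psi\circ\kappa$ in diagram~\eqref{eqn:property-of-psi}, and $\beta=\ol{\eps}\circ\psi$. Using $\ol{\psi}=L\circ\ol{\phi}$ and Corollary~\ref{cor:existence-of-G-R} I compute $G_{\CX}\circ\nu\circ\kappa=G_{\CX}\circ Q_{N_{\CX}}\circ\ol{\phi}=\wt{Q}\circ L\circ\ol{\phi}=\wt{Q}\circ\ol{\psi}=\psi\circ\kappa$, so surjectivity of $\kappa$ forces $\psi=G_{\CX}\circ\nu$, whence $\beta=\ol{\eps}\circ\psi=\ol{\eps}_{\mathrm{HJ}}\circ\nu=\beta_{\mathrm{HJ}}$. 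One must also match the summation ranges and the Grassmannians: for $\CX=\add R$ the functor $\restr{\CC(-,\sus C)}{\CX}$ corresponds to the $\End(R)$-module $\CC(R,\sus C)=G_{R}(C)$, so $\text{Gr}_{e}(F_{\CX}(\sus C))=\text{Gr}_{e}(G_{R}(C))$, and by Remark~\ref{rem:flX-is-abelian-and-inside-modX} the classes $e$ of the submodules that occur are exactly the elements of $K_{0}(\fl\CX)$ used in Definition~\ref{def:X-CC-map}; by Remark~\ref{rem:5-4}\ref{remark-5-4-i} the sum is finite. Assembling the three comparisons yields $\rho(C)=\alpha_{\mathrm{HJ}}(C)\sum_{e}\chi(\text{Gr}_{e}(G_{R}(C)))\beta_{\mathrm{HJ}}(e)$, which is precisely the modified Caldero-Chapoton map of \cite{HolmJorgensen-generalized-friezes-and-a-modified-caldero-chapoton-map-depending-on-a-rigid-object-2}.

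The step I expect to be the main obstacle is the identification of $\beta$ with $\beta_{\mathrm{HJ}}$: one has to verify that the universal property defining $\psi$ in \eqref{eqn:property-of-psi} really is the image under the isomorphism $G_{\CX}$ of the universal property defining $\nu$, and that the Grothendieck group $K_{0}(\fl\CX)$ in Definition~\ref{def:X-CC-map} agrees, on the relevant finite-length part, with the module Grothendieck group of \cite{HolmJorgensen-generalized-friezes-and-a-modified-caldero-chapoton-map-depending-on-a-rigid-object-2}. Both points reduce to Remark~\ref{rem:flX-is-abelian-and-inside-modX} together with the surjectivity of $\kappa$, but deserve to be spelled out; by contrast, the matching of $\alpha$ and of the Grassmannians are routine diagram chases.
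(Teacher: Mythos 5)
Your proof is correct and follows essentially the same route as the paper's: transport the exponential map across the isomorphism $G_{\CX}$, match $\alpha$ via the identity $Q_{\CX}=G_{\CX}\circ Q_{N_{\CX}}\circ\dindx{\CT}$, and match $\beta$ by showing $G_{\CX}\circ\nu$ and $\psi$ satisfy the same factorisation through the epimorphism $\kappa$ and hence coincide by uniqueness. The paper transports in the other direction, starting from $\eps\colon K_{0}^{\sp}(\CT)/N_{\CX}\to A$ and setting $\ol{\eps}\deff\eps\circ G_{\CX}^{-1}$, and leaves the Grassmannian identification implicit, but since $G_{\CX}$ is an isomorphism these are cosmetic differences.
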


\begin{proof}

To show this, let $\eps\colon K_{0}^{\sp}(\CT)/N_{\CX} \to A$ be a map satisfying $\eps(0) = 1$ and $\eps (e+f) = \eps(e)\eps(f)$, as assumed in \cite[Def.\ 2.8]{HolmJorgensen-generalized-friezes-and-a-modified-caldero-chapoton-map-depending-on-a-rigid-object-2}.  
Define $\ol{\eps} \deff \eps\tensor[]{G}{_{\CX}^{-1}}$. Then $\ol{\eps}$ satisfies the requirements for the definition of the $\CX$-Caldero-Chapoton map. 

The map $\alpha$ is 
$\alpha 
	= \ol{\eps} \circ Q_{\CX} 
	= \eps\tensor[]{G}{_{\CX}^{-1}}Q_{\CX}
	= \eps Q_{N_{\CX}} \dindx{\CT}$, 
which is readily seen to agree with \cite[Def.\ 2.8]{HolmJorgensen-generalized-friezes-and-a-modified-caldero-chapoton-map-depending-on-a-rigid-object-2}. 
For $\beta 
	= \ol{\eps}  \circ \psi 
	= \eps\tensor[]{G}{_{\CX}^{-1}}\psi$, 
first note that in \cite[Def.\ 2.8]{HolmJorgensen-generalized-friezes-and-a-modified-caldero-chapoton-map-depending-on-a-rigid-object-2} the map is defined as $\eps\phi$, where $\phi\colon K_{0}(\fl{\CX}) \to K_{0}^{\sp}(\CT)/N_{\CX}$ is the unique homomorphism satisfying $\phi \kappa = Q_{N_{\CX}}\ol{\phi}$. Thus, it suffices to show that $\tensor[]{G}{_{\CX}^{-1}}\psi = \phi$. 
This does indeed hold:
\begin{align*}
\tensor[]{G}{_{\CX}^{-1}}\psi\kappa 
	&= \tensor[]{G}{_{\CX}^{-1}}\wt{Q}\ol{\psi} && \text{using } \eqref{eqn:property-of-psi}\\
	&= \tensor[]{G}{_{\CX}^{-1}}\wt{Q}L\ol{\phi} && \text{since } \ol{\psi} = L\ol{\phi}\\
	&= Q_{N_{\CX}}\ol{\phi} && \text{using Theorem~\ref{thm:diagram-of-homomorphisms},}
\end{align*}
and hence the claim follows by uniqueness.
\end{proof}

\begin{rem}
\label{rem:psi-is-theta-X-mu}

We conclude this section with a remark about how our considerations have a connection with recent work by the authors on indices with respect to rigid subcategories. 
The map 
$\psi \colon K_{0}(\fl \CX) \to K_{0}(\CC,\BE_{\CX},\fs_{\CX})$ 
obtained above in \eqref{eqn:property-of-psi} relates to the homomorphism 
$\theta_{\CX}\colon K_{0}(\rmod{\CX}) \to K_{0}(\CC,\BE_{\CX},\fs_{\CX})$ 
defined recently in \cite{JorgensenShah-the-index-with-respect-to-a-rigid-subcategory}. 
The map $\theta_{\CX}$ is shown to measure how far the index with respect to $\CX$ is from being additive on triangles. 

The inclusion $\fl{\CX}\to\rmod{\CX}$ (see Remark~\ref{rem:flX-is-abelian-and-inside-modX}) induces a homomorphism 
\[
\mu \colon K_{0}(\fl{\CX}) \to K_{0}(\rmod{\CX}).
\]
Recall that $F_{\CT}(C)\in\fl{\CT}$ for all $C\in\CC$ (see Remark~\ref{rem:5-4}\ref{remark-5-4-i}). 
Consider the $K_{0}$-classes $[F_{\CA}(C)]_{\fl{\CA}}$ and $[F_{\CA}(C)]_{\rmod{\CA}}$ in 
$K_{0}(\fl{\CA})$ and $K_{0}(\rmod{\CA})$, respectively, for 
$\CA\in\{\CX, \CT\}$. 
Then we have 
\begin{align}
\psi ([F_{\CX}(C)]_{\fl{\CX}})
	&= \psi \kappa ([F_{\CT}(C)]_{\fl{\CT}}) \nonumber\\
	&= \wt{Q}\ol{\psi} ([F_{\CT}(C)]_{\fl{\CT}})\nonumber&& \text{using \eqref{eqn:property-of-psi}}\\
	&= \wt{Q}L(\indx{\CT}(C) + \indx{\CT}(\sus^{-1}C)) && \text{by  \cite[Lem.\ 2.10]{HolmJorgensen-generalized-friezes-and-a-modified-caldero-chapoton-map-depending-on-a-rigid-object-2}}\nonumber\\
	&= Q_{\CX}([C]) + Q_{\CX}([\sus^{-1} C]) && \text{using Theorem~\ref{thm:diagram-of-homomorphisms},} \label{eqn:psi-is-restriction-of-Q-sub-X}
\end{align}
viewing $[C]$ and $[\sus^{-1} C]$ as elements of $K_{0}^{\sp}(\CS)$. 
Note that the term in \eqref{eqn:psi-is-restriction-of-Q-sub-X} is equal to $\theta_{\CX}([F_{\CX}(C)]_{\rmod{\CX}})$, where $\theta_{\CX}\colon K_{0}(\rmod{\CX}) \to K_{0}(\CC,\BE_{\CX},\fs_{\CX})$ is the homomorphism of \cite{JorgensenShah-the-index-with-respect-to-a-rigid-subcategory}. 
Therefore, we see that $\psi ([F_{\CX}(C)]_{\fl{\CX}}) = \theta_{\CX}\mu([F_{\CX}(C)]_{\fl{\CX}})$.
In particular, this suggests $\psi$ can also be seen as some measure of how far the index with respect to $\CX$ defined in \cite{JorgensenShah-the-index-with-respect-to-a-rigid-subcategory} is from being additive on triangles. 

\end{rem}

\begin{acknowledgements}
The authors would like to thank P.-G.\ Plamondon for informing the first author of \cite[Prop.\ 4.11]{PadrolPaluPilaudPlamondon-associahedra-for-finite-type-cluster-algebras-and-minimal-relations-between-g-vectors}. This motivated the present article. 
The authors also thank M.\ Gorsky, H.\ Nakaoka and Y.\ Palu for directing the authors to \cite[App.\ B]{Fiorot-n-quasi-abelian-categories-vs-n-tilting-torsion-pairs}. 
We are grateful to M.\ Pressland for finding an error in a previous version of the article, and to the referee for their comments.

This work was supported by a 
DNRF Chair from the Danish National Research Foundation (grant DNRF156), 
by a Research Project 2 from the Independent Research Fund Denmark (grant 1026-00050B), 
by the Aarhus University Research Foundation (grant AUFF-F-2020-7-16), 
and by the Engineering and Physical Sciences Research Council (grant EP/P016014/1). 
The second author is also grateful to the London Mathematical Society for funding through an Early Career Fellowship with support from Heilbronn Institute for Mathematical Research (grant ECF-1920-57). 
\end{acknowledgements}

{
\bibliography{references2}
\bibliographystyle{mybst}}
\end{document}